\newtheorem{theorem}{Theorem} 
\newtheorem{prop}{Proposition}
\newtheorem{lem}{Lemma}
\newtheorem{remark}{Remark}
\DeclareMathOperator{\dive}{div}
\def\N{\mathbb{N}}
\def\Z{\mathbb{Z}}
\def\cJ{\mathcal{J}}
\def\cE{\mathcal{E}}
\def\cC{\mathcal{C}}
\def\cD{\mathcal{D}}
\def\dps{\displaystyle}
\def\R{\mathbb{R}}
\def\1{\mathds{1}}
\title{Cross-diffusion systems with non-zero flux and moving boundary conditions.}
\date{}
\author{
  Athmane Bakhta \thanks{Universit\'e Paris-Est, CERMICS(ENPC)}\\
  \texttt{athmane.bakhta@cermics.enpc.fr} 
\and
  Virginie Ehrlacher  \thanks{Universit\'e Paris-Est, CERMICS (ENPC) \& INRIA (Matherials team-project)}\\
  \texttt{virginie.ehrlacher@enpc.fr}
  \and
 }
\begin{document}
\maketitle

\begin{abstract}
We propose and analyze a one-dimensional multi-species cross-diffusion system with non-zero-flux boundary conditions on a moving domain, motivated by the modeling of a Physical Vapor Deposition process. Using the boundedness 
by entropy method introduced and developped in~\cite{Burger,Jungel}, we prove the existence of a global weak solution to the obtained system. In addition, existence of a solution to an  {optimization} problem defined on the fluxes is established under the assumption that the solution to the considered cross-diffusion system is unique. Lastly, 
we prove that in the case when the imposed external fluxes are constant and positive and the entropy density is defined as a classical logarithmic entropy, the concentrations of the different species converge 
in the long-time limit to constant profiles at a rate inversely proportional to time. These theoretical results are illustrated by numerical tests.   
\end{abstract}


\section*{Introduction}
\label{sec:intro}

The aim of this work is to propose and analyze a mathematical model for the description of a Physical Vapor Deposition 
(PVD) process {, the different steps of which are described in details for instance in}~\cite{PVD}. Such a technique is used in several contexts, for instance for the fabrication 
of thin film crystalline solar cells. The procedure works as follows: a wafer is introduced in a hot chamber 
where several chemical elements are injected under a gaseous form. As the latter deposit on the substrate, 
an heterogeneous solid layer grows upon it. Two main phenomena have to be taken into account: the first is naturally the 
evolution of the surface of the film; the second is the diffusion of the various species in the bulk, due to the high temperature conditions. Experimentalists 
are interested in controlling the external gas fluxes that are injected into the chamber, so that, at the end of the process, the spatial distributions of the concentrations of the diverse components
inside the new layer are as close as possible to target profiles.

\medskip

In this article, a one-dimensional model which takes into account these two factors is studied. We see this work as a preliminary step before tackling more challenging models 
in higher dimensions, including surfacic diffusion effects for instance. This will be the object of future work. Our main motivation for the study of such a model 
concerns the optimization of the external fluxes injected in the chamber during a PVD process.

\medskip

More precisely, let us assume that at a time $t\geq 0$, the solid layer is composed of $n+1$ different 
chemical species and occupies a domain of the form 
$(0, e(t)) \subset \R_+$, where $e(t)>0$ denotes the thickness of the film. The evolution of $e(t)$ is determined by the fluxes of atoms that are absorbed at the 
surface of the layer. At time $t>0$ and point $x\in (0,e(t))$, the local  {volumic fractions} of the different species 
are denoted respectively by $u_0(t,x), \cdots, u_n(t,x)$.  {Let us point out that if the molar volume of the solid is uniform in the thin film layer and constant during all the process, then $u_i(t,x)$ is also equal (up to a constant multiplicative constant) to the 
local concentration of the $i^{th}$ species at time $t>0$ and point $0\leq x\leq e(t)$. }Up to some renormalization condition, it is natural to expect 
that these functions are non-negative and satisfy a volumic constraint which reads as follows: 
\begin{equation}\label{eq:volumic}
\forall 0\leq i \leq n, \quad u_i(t,x)\geq 0 \mbox{ and } \sum_{i=0}^n u_i(t,x) = 1.
\end{equation}
 {Because of the constraint~(\ref{eq:volumic}), it holds that $u_0(t,x) = 1 - \sum_{i=1}^n u_i(t,x)$ for all $t>0$ and $x\in (0, e(t))$. Thus, the knowledge 
of the $n$ functions $u_1,\cdots,u_n$ is enough to determine the dynamics of the whole system. Replacing $u_0$ by $1-\sum_{i=1}^n u_i$, and} denoting by $u$ the vector-valued function $(u_1, \cdots, u_n)$, the evolution of the concentrations inside 
the bulk of the solid layer is modeled through a system of cross-diffusion equations
of the form
\begin{equation}\label{eq:cross-diffusion}
\partial_t u - \partial_x\left( A(u) \partial_x u \right) = 0, \quad \mbox{for }t> 0, \; x\in (0, e(t)),
\end{equation}
with approriate boundary and initial conditions, where $A: [0,1]^n \to \R^{n\times n}$ is a matrix-valued 
function encoding the cross-diffusion properties of the different species. 

\medskip

Such systems have received much attention from the mathematical 
community in the case when no-flux boundary conditions are imposed on a fixed domain~\cite{Ladyzenskaia, Amann, LeNguyen, Griepentrog}. 
Then, in arbitrary dimension $d\in\N^*$, the system reads 
$$
\partial_t u - \mbox{\rm div}_x\left( A(u) \nabla_x u \right) = 0, \quad \quad \mbox{for }t> 0, \; x\in \Omega,
$$
for some fixed bounded regular domain $\Omega \subset \R^d$ and boundary conditions 
$$
\left(A(u) \nabla_x u\right) \cdot \textbf{n} = 0 \mbox{ on } \partial \Omega \mbox{ and } t\geq 0, 
$$
where $\textbf{n}$ denotes the outward normal unit vector to $\Omega$. 

Such systems appear naturally in the study of population's dynamics in biology, and in chemistry, for the study of the evolution of 
chemical species concentrations in a given environment~\cite{Painter1, Painter2}. The analysis of these systems is a challenging task from a mathematical point of 
view~\cite{Lepoutre, Alikakos, Kufner, Redlinger, ChenJungel1,ChenJungel2, Difrancesco}. 
Indeed, the obtained system of parabolic partial differential equations 
may be degenerate and the diffusion matrix $A$ is in general not symmetric and/or not positive definite. Besides, in general, no maximum principle can be proved 
for such systems. Nice counterexamples are given in~\cite{Stara}: there exist H\"older continuous solutions to certain cross-diffusion systems which are not bounded, and there exist bounded 
weak solutions which develop singularities in finite time. 

It appears that some of these cross-diffusion systems have a formal gradient flow structure. Recently, an elegant idea, which consists in introducing 
an entropy density that appears to be a Lyapunov functional for these systems, has been introduced by Burger et al. in~\cite{Burger}. This analysis strategy, which was later
extended by J\"ungel in~\cite{Jungel} and named \itshape boundedness by entropy \normalfont technique, 
enables to obtain the existence of global in time weak solutions satisfying (\ref{eq:volumic}) under 
suitable assumptions on the diffusion matrix $A$. It was successfully applied in several contexts 
(see for instance~\cite{JungelStelzer1, JungelStelzer2, JungelZamponi1, JungelZamponi2}).

\medskip

However, there are very few works which focus on the analysis of such cross-diffusion systems with 
non zero-flux boundary conditions and moving domains. 
To our knowledge, only systems containing at most two different species have been studied,
so that $n=1$ and the evolution of the concentrations inside the domain are decoupled and 
follow independent linear heat equations~\cite{moving}.

\medskip

The one-dimensional model~(\ref{eq:cross-diffusion}) we propose and analyze in this paper describes
the evolution of the concentration of $n+1$ different atomic species, with external flux boundary conditions, in the case when the 
diffusion matrix $A$ satisfies similar assumptions to those needed in the no-flux boundary conditions case studied in~\cite{Jungel}. 

The article is organized as follows: the results of~\cite{Jungel} 
in the case of no-flux boundary conditions in arbitrary dimension are recalled in Section~\ref{sec:noflux}. 
We illustrate them on a prototypical example of diffusion matrix $A$, which is introduced in Section~\ref{sec:example}. 

Our results in the case of a one-dimensional moving domain with non-zero flux boundary conditions are gathered in 
Section~\ref{sec:flux}. We prove the existence of a global in time weak solution to~(\ref{eq:cross-diffusion}) 
with appropriate boundary conditions and evolution law for $e(t)$ in Section~\ref{sec:existence}. The long time 
behaviour of a solution is analyzed in the case of constant external absorbed fluxes in~\ref{sec:longtime} 
and an optimization problem is studied in~\ref{sec:control}. The proofs of these results 
are gathered in Section~\ref{sec:proofs}.

A numerical scheme used to approximate the solution of such systems is described in Section~\ref{sec:numerics} 
and our theoretical results will be illustrated by several numerical tests. We refer the reader to~\cite{theseAthmane} 
for comparisons between our proposed model and experimental results obtained in the context of thin film solar cells fabrication.

\section{Case of no-flux boundary conditions in arbitrary dimension} \label{sec:noflux}

In Section~\ref{sec:example}, a particular cross-diffusion model on a fixed domain with no-flux boundary conditions is presented. The latter is a prototyical example of the 
systems of equations considered in this paper. 
Its formal gradient flow structure is highlighted in Section~\ref{sec:gradientflow}. 
Using slight extensions of results of~\cite{JungelZamponi1, JungelZamponi2}, 
it can be seen that this system can be analyzed using the theoretical framework developped 
in~\cite{Jungel,Burger}, which is recalled in Section~\ref{sec:Jungel}.

\medskip

Throughout this section, let us denote by $d \in \N^*$ the space dimension, 
$\Omega \subset \R^d$ the regular bounded domain occupied by the solid.
The local concentrations at time $t>0$ and position $x\in \Omega$ of the $n+1$ different 
atomic species entering in the composition of the material are 
respectively denoted by $u_0(t,x),\cdots,u_n(t,x)$. We also denote by $\textbf{n}$ the normal unit vector 
pointing outwards the domain $\Omega$. 

\subsection{Example of cross-diffusion system}\label{sec:example}

\subsubsection{Presentation of the model}\label{sec:model}

As mentioned above, we have one particular example of system of cross-diffusion equations in mind, which is used to illustrate more general 
theoretical results. This system, with no-flux boundary conditions, reads as follows : for any $ 0 \leq i \leq n$, 
\begin{equation}\label{eq:system_all}
\left\{
\begin{array}{ll}
\partial_t u_i - \dive_x \left(  \sum \limits_{0 \leq j \neq i \leq n} K_{ij} (u_j \nabla_x u_i - u_i \nabla_x u_j)\right) = 0,  & \mbox{for }(t,x) \in \R_+^* \times \Omega, \\
 \left(  \sum \limits_{0 \leq j \neq i \leq n} K_{ij} (u_j \nabla_x u_i - u_i \nabla_x u_j)\right)\cdot \textbf{n} = 0,  & \mbox{for }(t,x) \in \R_+^* \times \partial\Omega, \\
\end{array}
\right .
\end{equation}
where for all $ 0 \leq i \neq j \leq n$, the positive real numbers $K_{ij}$ satisfy $K_{ij} = K_{ji} > 0$. They represent the cross-diffusion coefficients of atoms of type $i$ with atoms
of type $j$. This set of equations can be formally derived from a discrete stochastic lattice hopping model, which is detailed in the Appendix. 
\medskip

The initial condition $(u^0_0,\cdots, u_n^0)\in L^1(\Omega ; \R^{n+1})$ of this system is assumed to satisfy:
\begin{equation}\label{eq:initconstraint}
\forall 0\leq i \leq n, \quad u^0_i(x)\geq 0, \quad \sum_{i=0}^n u^0_i(x) = 1 \mbox{ and } u_i(0,x) = u_i^0(x) \quad \mbox{ a.e. in } \Omega . 
\end{equation}
The relationship $\sum_{i=0}^n u^0_i(x) = 1$ is a natural volumic constraint which encodes the fact that each site of the crystalline lattice of the solid has to be occupied 
(vacancies being treated as a particular type of atomic species). 
 
\medskip 

Summing up the $n+1$ equations of~(\ref{eq:system_all}), we observe that a solution $(u_0, \cdots, u_n)$ must necessarily satisfy 
$\partial_t \left( \sum_{i=0}^n u_i \right) = 0$. It is thus expected that the following relationship should hold: 
\begin{equation}\label{eq:constraint}
\forall 0\leq i \leq n, \quad u_i(t,x)\geq 0, \quad \sum_{i=0}^n u_i(t,x) = 1, \quad \mbox{a.e. in } \R_+^* \times \Omega.
\end{equation}

\medskip

Plugging the expression $u_0(t,x) = 1 - \sum_{i=1}^n u_i(t,x)$ in~\eqref{eq:system_all},  { it holds that for all $1\leq i \leq n$,}
\begin{align*}
  {0} &  { = \partial_t u_i - \mbox{\rm div}_x\left[ \sum_{1\leq j \neq i \leq n} K_{ij}\left(u_j \nabla_x u_i - u_i \nabla_x u_j \right)\right]}\\
 &  { - \mbox{\rm div}_x\left[ K_{i0}\left( \left( 1 - \sum_{1\leq j\neq i \leq n} u_j -u_i\right)\nabla_x u_i -u_i \nabla_x \left(1 - \sum_{1\leq j\neq i \leq n} u_j -u_i\right)\right)\right]}\\ 
 &  { = \partial_t u_i  - \mbox{\rm div}_x\left[ \sum_{1\leq j \neq i \leq n} (K_{ij} - K_{i0}) \left(u_j \nabla_x u_i - u_i \nabla_x u_j \right) + K_{i0} \nabla_x u_i \right].}\\ 
 \end{align*}
 {Thus, }the system can be rewritten as a function of $u:= (u_1, \cdots, u_n)^T$ as follows 
\begin{equation}\label{eq:system_part}
\left\{
\begin{array}{ll}
\partial_t u - \dive_x \left( A(u) \nabla_x u \right) = 0, &  \quad \mbox{ for } (t,x) \in \R_+^* \times \Omega,\\
(A(u) \nabla_x u)\cdot \textbf{n} = 0, & \quad \mbox{ for }(t,x)\in \R_+^* \times \partial \Omega,\\
u(0,x) = u^0(x), &\quad \mbox{ for } x \in \Omega,\\
\end{array}
\right .
\end{equation}
where $u^0:=(u_1^0, \cdots, u_n^0)^T$ and the matrix-valued application 
$$ 
A:\left\{ 
					      \begin{array}{ccc}
					      [0,1]^n & \to & \R^{n\times n}\\
					      u:=(u_i)_{1\leq i \leq n} & \mapsto & \left( A_{ij}(u)\right)_{1\leq i,j \leq n}
					      \end{array}
					      \right .
					      $$ 
is defined by
\begin{equation}\label{eq:defA}
\left\{
 \begin{array}{l}
  \forall 1\leq i \leq n, \quad A_{ii}(u) =  \sum \limits_{ 1\leq j \neq i \leq n} (K_{ij}- K_{i0}) u_j + K_{i0}, \\
  \forall 1\leq i\neq j \leq n, \quad A_{ij}(u) =  -(K_{ij}- K_{i0}) u_i.\\
 \end{array}
\right .
\end{equation}

Despite their importance in chemistry or biology, it appears that the mathematical analysis of systems of the form~\eqref{eq:system_part}, 
taking into account constraints~\eqref{eq:constraint}, is quite 
recent~\cite{Burger,Griepentrog,Jungel,Mielke}.  {Let us point out here that the non-negativity of the solutions to~(\ref{eq:system_part}) through time is a mathematical issue, linked to the absence of a maximum principle for such systems.}

At least up to our knowledge, the first proof of existence of global weak solutions of~\eqref{eq:system_part} satisfying constraints \eqref{eq:constraint}
with non-identical cross-diffusion coefficients is given in~\cite{Burger} for $n=2$ with coefficients $K_{ij}$ such that $K_{i0}>0$ for $i=1,2$ and $K_{12} = K_{21} = 0$. 
These results were later extended in~\cite{JungelZamponi2} to a general number of species 
$n\in \N
^*$ with cross-diffusion coefficients satisfying $K_{i0} >0$ and $K_{ij} = 0$ for all $1\leq i \neq j \leq n$; the authors of the latter article proved in addition the uniqueness 
of such weak solutions. In~\cite{JungelZamponi1}, the case $n=2$ with arbitrary positive coefficients $K_{ij}>0$ is covered, though no uniqueness result is provided. 
The main difficulty of the mathematical analysis of such equations relies in the bounds~\eqref{eq:constraint}, which are not obvious since no maximum principle 
can be proved for these systems in general. In all the articles mentioned above, the analysis framework used by the authors is the 
so-called \itshape boundedness by entropy method\normalfont. The main idea of this technique is to write the above system of equations as a formal 
gradient flow and derive estimates on the solutions $(u_0, \cdots, u_n)$ using the decay of some 
well-chosen entropy functional. We present in Section~\ref{sec:gradientflow} the formal gradient flow structure of (\ref{eq:system_part}) and 
recall the results of~\cite{Jungel} in Section~\ref{sec:Jungel}.

\medskip

\begin{remark}
This model is linked to the so-called \normalfont Stefan-Maxwell \itshape model, studied in~\cite{JungelStelzer1,Boudin}. Indeed, the model considered in the 
latter paper reads
\begin{equation}\label{eq:system_SM}
\left\{
\begin{array}{ll}
\partial_t u - \dive_x \left( A(u)^{-1} \nabla_x u \right) = 0, &  \quad \mbox{ for } (t,x) \in (0,T] \times \Omega,\\
(A(u) \nabla_x u)\cdot \textbf{n} = 0, & \quad \mbox{ for }(t,x)\in (0,T] \times \partial \Omega,\\
u(0,x) = u^0(x), &\quad \mbox{ for } x \in \Omega,\\
\end{array}
\right .
\end{equation}
where $A$ is defined by (\ref{eq:defA}).
\end{remark}

\subsubsection{Formal gradient flow structure of (\ref{eq:system_part})}\label{sec:gradientflow}

We detail in this section the formal gradient flow structure of the system~(\ref{eq:system_part}). 

Let $\cD\subset \R^n$ be defined by
\begin{equation}\label{eq:defD}
\cD := \left \{  (u_1, \cdots,u_n) \in (\R^*_+)^n, \quad \sum \limits_{i=1}^n u_i <1\right \} \subset (0,1)^n.
\end{equation}
Let us introduce the classical \itshape entropy density \normalfont $h$ (see for instance \cite{Burger}, \cite{Jungel}, \cite{JungelZamponi2} and \cite{Mielke}) 
\begin{equation}\label{eq:defh}
h : \left\{
\begin{array}{ccc}
 \overline{\cD}  & \longrightarrow & \R \\
u:=(u_i)_{1\leq i \leq n} & \longmapsto &h(u) =   \sum \limits_{i=1}^n u_i \log u_i + (1-\rho_u)\log (1-\rho_u), \\
\end{array}
\right .
\end{equation}
where $\rho_u:= \sum_{i=1}^n u_i$. Some properties of $h$ can be easily checked: 
\begin{itemize}
 \item[(P1)] the function $h$ belongs to $ \cC^0( \overline{\cD}) \cap \cC^2(\cD)$; consequently, $h$ is bounded on $\overline{\cD}$;
 \item[(P2)] the function $h$ is strictly convex on $\cD$; 
 \item[(P3)] its derivative 
$$
Dh : \left\{
\begin{array}{ccc}
\cD & \longrightarrow & \R^n \\
(u_i)_{1\leq i \leq n} & \mapsto & \left( \log \frac{u_i}{1-\rho_u}\right)_{1\leq i \leq n},\\
\end{array}
\right .
$$ is invertible and its inverse is given by
$$
(Dh)^{-1} : \left\{
\begin{array}{ccc}
 \R^n & \longrightarrow & \cD\\
 (w_i)_{1\leq i \leq n} & \mapsto & \frac{e^{w_i}}{1 + \sum_{j=1}^n e^{w_j}}.\\
\end{array}
\right .
$$
\end{itemize}

In the following, we denote by $D^2h$ the Hessian of $h$. The \itshape entropy functional \normalfont $\mathcal{E}$ is defined by 
\begin{equation}
\label{eq:defE}
\cE : \left\{
\begin{array}{ccc}
 L^\infty(\Omega; \overline{\cD})  & \longrightarrow & \R \\
u & \longmapsto &\cE (u): =  \int_{\Omega} h(u(x))\,dx. \\
\end{array}
\right .
\end{equation}
 {Throughout the article, for all $u\in L^\infty(\Omega; \cD)$, we shall denote by $D\mathcal{E}(u)$ the measurable vector-valued function defined by
$$
D\mathcal{E}(u): \left\{ 
\begin{array}{ccc}
 \Omega & \to & \R^n \\
 x & \mapsto & Dh(u(x)).\\
\end{array}
\right .
$$
}

\medskip 

The system \eqref{eq:system_part} can then be formally rewritten under the following gradient flow structure 

 \begin{equation} \label{eq:System_GF}
 \left \{ 
 \begin{array}{ll}
 \partial_t u - \dive_x \left( M(u) \nabla_x D\cE(u) \right) = 0, & \quad  \mbox{ for } (t,x)\in \R_+^*\times \Omega,\\
 \left( M(u) \nabla_x D\cE(u) \right) \cdot \textbf{n} = 0, & \quad \mbox{ for } (t,x) \in \R_+^*\times \partial \Omega, \\
 u(0,x) = u^0(x), & \quad \mbox{ for } x \in \Omega,\\
 \end{array} 
 \right. 
\end{equation}  
where $M:\overline{\cD} \to \R^{n\times n}$ is the so-called \textit{mobility matrix} of the system defined for all $u\in \cD$ by 
$$
M(u):= A(u) (D^2h(u))^{-1}. 
$$
More precisely, it holds that for all $u\in \overline{\cD}$, $M(u) = (M_{ij}(u))_{1\leq i,j \leq n}$ where for all $1\leq i\neq j \leq n$, 
\begin{equation}\label{eq:defM}
M_{ii}(u) = K_{i0} (1-\rho_u) u_i + \sum_{1\leq j\neq i \leq n} K_{ij} u_i u_j \quad \mbox{ and } \quad M_{ij}(u) = -K_{ij}u_i u_j.
\end{equation}

\subsection{Existence of global weak solutions by the boundedness by entropy technique}\label{sec:Jungel}

The formal gradient flow formulation of a system of cross-diffusion equations 
is a key point in the boundedness by entropy technique. In the example presented in Section~\ref{sec:example}, it implies in particular that $\mathcal{E}$ 
is a Lyapunov functional for the system \eqref{eq:system_part}~\cite{Burger,Jungel}. 
However, the mobility matrix obtained for these systems is not a concave function of the densities, so that standard gradient flow theory arguments (such as the minimizing movement method) 
cannot be applied in this context~\cite{MatthesZinsl, Dolbeault, JKO, Mielke}. However, the existence of a global weak solution to~\eqref{eq:system_part} can still be proved.
Let us recall here a simplified version of Theorem~2 of~\cite{Jungel} which is adapted to our context.
 
\begin{theorem}[Theorem~2 of~\cite{Jungel}] \label{th:Jungel}
Let $\cD\subset \R^n$ be the domain defined by (\ref{eq:defD}). Let $A: u\in \overline{\cD} \mapsto A(u):=(A_{ij}(u))_{1\leq i,j\leq n} \in \R^{n\times n}$ be 
 a matrix-valued functional defined on $\overline{\cD}$ satisfying $A\in \cC^0(\overline{\cD}; \R^{n\times n})$ and the following assumptions: 
 \begin{itemize}
  \item [(H1)] There exists a bounded from below convex function $h\in \cC^2(\cD, \R)$ such that its derivative $Dh:\cD \to \R^n$ is invertible on $\R^n$; 
  \item[(H2)]  {There exists $\alpha >0$, and for all $1\leq i \leq n$, there exist $1\geq m_i>0,$ such that for all $z = (z_1, \cdots, z_n)^T \in \R^n$ and $u=(u_1,\cdots, u_n)^T\in \cD$, 
  $$
  z^T D^2h(u)A(u) z \geq \alpha \sum_{i=1}^n u_i^{2m_i-2} z_i^2.
  $$}
 \end{itemize}
Let $u^0 \in L^1(\Omega; \cD)$ so that $w^0:= Dh(u^0) \in L^\infty(\Omega; \R^n)$. Then, there exists a weak solution $u$ with initial condition $u^0$ to 
\begin{equation}\label{eq:Jungeltrue}
\left\{
\begin{array}{ll}
\partial_t u = \mbox{\rm div}_x(A(u)\nabla_x u), & \quad \mbox{ for } (t,x)\in \R_+^*\times \Omega,\\
\left( A(u)\nabla_x u \right) \cdot \textbf{n} = 0, & \quad \mbox{ for } (t,x) \in \R_+^*\times \partial\Omega,\\
\end{array}
\right .
\end{equation}
such that for almost all $(t,x)\in \R_+^*\times \Omega$, $u(t,x)\in \overline{\cD}$ with 
$$
u\in L^2_{\rm loc}(\R_+; H^1(\Omega,\R^n))\mbox{ and } \partial_t u \in L^2_{\rm loc}(\R_+; (H^1(\Omega;\R^n))').
$$
\end{theorem}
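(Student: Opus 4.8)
The plan is to prove Theorem~\ref{th:Jungel} by the Galerkin-type approximation--compactness scheme that underlies the boundedness by entropy method of~\cite{Jungel}. The crucial change of unknowns is to work not with $u$ itself but with the \emph{entropy variable} $w := Dh(u)$, which ranges over all of $\R^n$ by assumption (H1). Since $Dh$ is invertible, we set $u = (Dh)^{-1}(w)$; the key structural gain is that this substitution automatically forces $u(t,x)\in\overline{\cD}$, so the constraint~\eqref{eq:constraint} is built into the formulation rather than something to be recovered via a maximum principle. In the $w$-variable the equation formally reads $\partial_t u(w) = \mbox{\rm div}_x\!\left(B(w)\nabla_x w\right)$ with $B(w) = A(u(w))\,D^2h(u(w))^{-1}$, and the quadratic form in (H2) becomes the coercivity statement $\nabla_x w : B(w)\nabla_x w \geq \alpha\sum_i u_i^{2m_i-2}|\partial_x u_i|^2$ after undoing the chain rule, which is exactly what controls the spatial gradients.

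\medskip

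First I would set up a regularized, discretized problem. Introduce an implicit Euler time discretization with step $\tau>0$ and a Galerkin space $H^s(\Omega)$ with $s>d/2$ (so that $H^s\hookrightarrow L^\infty$), together with an $\varepsilon$-regularization adding a term $\varepsilon\bigl((-\Delta)^s w + w\bigr)$ to ensure coercivity of the elliptic problem on the whole of $\R^n$. At each time step one solves, for $w^k$ given $w^{k-1}$, a nonlinear elliptic problem whose solvability I would establish by a fixed-point argument (Leray--Schauder or Brouwer on a finite-dimensional Galerkin subspace), using the Lax--Milgram structure of the linearized problem and the a priori bound coming from the entropy. Testing the discrete equation against $w^k$ itself and using convexity of $h$ to estimate $\int_\Omega (u(w^k)-u(w^{k-1}))\cdot w^k \geq \cE(u(w^k)) - \cE(u(w^{k-1}))$ yields the discrete entropy dissipation inequality
\begin{equation}\label{eq:discreteentropy}
\cE(u(w^k)) + \tau\,\alpha \sum_{i=1}^n \int_\Omega u_i^{2m_i-2}|\partial_x u_i^k|^2\,dx + \varepsilon\tau\|w^k\|_{H^s}^2 \leq \cE(u(w^{k-1})).
\end{equation}
Summing over $k$ and using the lower bound on $h$ gives uniform-in-$(\tau,\varepsilon)$ estimates: $u$ bounded in $L^\infty$ (by the constraint), $\nabla_x\!\left(u_i^{m_i}\right)$ bounded in $L^2$ of space-time, and the discrete time derivative controlled in $(H^s)'$.

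\medskip

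Next I would pass to the limit. The gradient bound on $u_i^{m_i}$ together with the $L^\infty$ bound furnishes, via a Gagliardo--Nirenberg interpolation, compactness of $u$ in $L^2_{\rm loc}(\R_+;L^2(\Omega))$; combined with the uniform bound on the discrete time derivative, an Aubin--Lions--Simon compactness argument (in its discrete-in-time variant) extracts a subsequence of the piecewise-constant-in-time interpolants converging strongly to some $u$ with $u(t,x)\in\overline\cD$ a.e. Continuity of $A$ then lets me identify the nonlinear flux limit $A(u)\nabla_x u$ in the weak formulation, first sending $\varepsilon\to 0$ (killing the regularizing term, whose $\varepsilon\tau\|w\|_{H^s}^2$ bound shows $\varepsilon w\to 0$ appropriately) and then $\tau\to 0$. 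The regularity $u\in L^2_{\rm loc}(\R_+;H^1)$ and $\partial_t u\in L^2_{\rm loc}(\R_+;(H^1)')$ follows from the estimates once we observe that on the set where $u$ stays away from $\partial\cD$ the weights $u_i^{2m_i-2}$ are bounded below, and a degenerate-parabolic argument handles the boundary.

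\medskip

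The main obstacle I expect is twofold and concentrated in the compactness step. The first difficulty is that (H2) only controls the \emph{degenerate} quantities $\nabla_x(u_i^{m_i})$ rather than $\nabla_x u_i$ directly, so recovering strong convergence of $u$ (needed to pass the nonlinearity $A(u)\nabla_x u$ to the limit) requires care when $m_i<1$: one must interpolate the $L^\infty$ bound against the gradient bound on the powers $u_i^{m_i}$ and verify an Aubin--Lions hypothesis despite the degeneracy. The second, more delicate point is controlling the discrete time derivative uniformly in $(H^s)'$ and reconciling the two limits $\varepsilon\to 0$ and $\tau\to 0$ in the correct order, since the time-regularity estimate genuinely uses the $\varepsilon$-term. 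Once strong $L^2$ convergence of $u$ is secured, identifying the limiting weak formulation is routine; establishing that convergence under the degenerate coercivity is where the real work lies.
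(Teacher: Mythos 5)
Your proposal is correct and is essentially the same argument the paper relies on: the statement is a quoted result (Theorem~2 of~\cite{Jungel}), and your scheme --- entropy variables $w=Dh(u)$, implicit Euler combined with an $\varepsilon$-regularization in a space $H^s\hookrightarrow L^\infty$, Leray--Schauder solvability of the discrete problems, a discrete entropy inequality obtained by testing with $w^k$ and using convexity of $h$, uniform bounds, and discrete Aubin--Lions compactness --- is precisely that proof, and it is also the template the paper itself follows in Section~\ref{sec:proof_exists} to prove Theorem~\ref{th:moving}, its moving-boundary analogue. The only inessential deviations are that the two limits $\varepsilon\to 0$ and $\tau\to 0$ are taken simultaneously rather than sequentially, and that the ``main obstacle'' you identify is in fact harmless here: since $m_i\leq 1$ and $0<u_i<1$, one has $|\nabla_x u_i|\leq m_i^{-1}\,|\nabla_x(u_i^{m_i})|$ pointwise, so the $H^1$ bound on $u$ and the strong $L^2$ compactness follow directly, without interpolation.
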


Lemma~\ref{lem:cor} states that the prototypical example presented in Section~\ref{sec:example} falls into the framework of Theorem~\ref{th:Jungel}. The proof of the latter
is given Section~\ref{sec:proof_cor} for the sake of completeness, and relies on ideas introduced in~\cite{JungelZamponi2}. 
\begin{lem}\label{lem:cor}
Let $\cD\subset \R^n$ be the domain defined by (\ref{eq:defD}) and $A: u\in \overline{\cD} \mapsto A(u):=(A_{ij}(u))_{1\leq i,j\leq n} \in \R^{n\times n}$ be the matrix-valued function defined by (\ref{eq:defA}). 
 Then, $A \in \cC^0(\overline{\cD}; \R^{n\times n})$ and satisfies assumptions (H1)-(H2) of Theorem~\ref{th:Jungel}, with $h$ given by (\ref{eq:defh}),  {$\alpha = \min_{1\leq i \neq j \leq n} K_{ij}$} and
 $m_i = \frac{1}{2}$ for all $1\leq i \leq n$. 
\end{lem}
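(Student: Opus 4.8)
The continuity of $A$ and assumption (H1) require essentially no work. Each entry of $A$ in~(\ref{eq:defA}) is an affine function of $u$, so $A\in\cC^0(\overline{\cD};\R^{n\times n})$, and the properties (P1)--(P3) already recorded for the logarithmic entropy $h$ state precisely what (H1) demands: $h$ is $\cC^2$ on $\cD$, bounded (hence bounded from below) on $\overline{\cD}$, convex, with $Dh:\cD\to\R^n$ invertible. Thus (H1) holds verbatim, and the whole difficulty lies in the coercivity estimate (H2) with the announced constants $\alpha=\min_{0\le i\ne j\le n}K_{ij}$ and $m_i=\tfrac12$, so that the right-hand side reads $\alpha\sum_{i} u_i^{-1}z_i^2$.

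My plan for (H2) is to exploit the gradient-flow factorization rather than to expand $D^2h(u)A(u)$ entrywise. First I would compute the Hessian explicitly, finding $D^2h(u)=\operatorname{diag}(1/u_i)+\frac{1}{1-\rho_u}\mathbf{1}\mathbf{1}^T$ with $\rho_u=\sum_{i=1}^n u_i$. Since $M(u)=A(u)(D^2h(u))^{-1}$, that is $A(u)=M(u)D^2h(u)$, and $D^2h(u)$ is symmetric, setting $w:=D^2h(u)z$ yields the identity
$$
z^T D^2h(u)A(u)z \;=\; z^T D^2h(u)M(u)D^2h(u)z \;=\; w^T M(u)\,w .
$$
The point of this reduction is that $M(u)$, given by~(\ref{eq:defM}), is a symmetric, diagonally structured matrix, whereas $D^2h(u)A(u)$ is neither.

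The key step is then to recognise $w^T M(u)w$ as a weighted ``graph Dirichlet energy''. Writing $v_i:=z_i/u_i$ for $1\le i\le n$, one checks that $w_i=v_i+\frac{S}{1-\rho_u}$ with $S:=\sum_{i=1}^n z_i$, so that the off-diagonal differences satisfy $w_i-w_j=v_i-v_j$. Reinstating the eliminated $0$-th species through $u_0:=1-\rho_u$ and $v_0:=-S/u_0$, so that $w_i=v_i-v_0$, the standard symmetrisation of the off-diagonal terms of $M(u)$ collapses the whole quadratic form into the fully symmetric expression
$$
w^T M(u)\,w \;=\; \frac12\sum_{0\le i\ne j\le n} K_{ij}\,u_i u_j\,(v_i-v_j)^2 .
$$
Bounding $K_{ij}\ge\alpha$ and using the elementary variance identity $\tfrac12\sum_{0\le i\ne j\le n}u_iu_j(v_i-v_j)^2=\sum_{i=0}^n u_iv_i^2-\big(\sum_{i=0}^n u_iv_i\big)^2$, together with $\sum_{i=0}^n u_i=1$ and the fact that the choice of $v_0$ forces the weighted mean $\sum_{i=0}^n u_iv_i=0$, I obtain
$$
z^T D^2h(u)A(u)z \;\ge\; \alpha\sum_{i=0}^n u_i v_i^2 \;\ge\; \alpha\sum_{i=1}^n u_i v_i^2 \;=\; \alpha\sum_{i=1}^n \frac{z_i^2}{u_i},
$$
which is exactly (H2) for $m_i=\tfrac12$.

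The main obstacle is conceptual rather than computational: one must find the right change of variables, namely reinstating the eliminated species through $v_0$, which turns an a priori unsigned quadratic form into a manifestly nonnegative Dirichlet energy on the complete graph over $\{0,\dots,n\}$. Once this symmetric form is in hand the lower bound is merely the nonnegativity of a weighted variance, and the positivity of all coefficients $K_{ij}$ on $\overline{\cD}$ guarantees the uniform constant $\alpha>0$.
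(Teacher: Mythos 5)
Your proof is correct, and it takes a route that differs in a meaningful way from the paper's. Both arguments share the same pivotal trick: since $D^2h(u)$ is symmetric and positive definite, the substitution $w = D^2h(u)z$ transfers the problem from the non-symmetric matrix $D^2h(u)A(u)$ to the symmetric mobility matrix $M(u) = A(u)(D^2h(u))^{-1}$, after which one symmetrizes the off-diagonal terms using $K_{ij}=K_{ji}$. Where you diverge is in how the lower bound $\alpha\,\mathrm{diag}(1/u_i)$ is extracted. The paper subtracts the target first: it introduces an auxiliary matrix $P(u)$ with $D^2h(u)P(u) = \mathrm{diag}(1/u_i)$, decomposes $A(u)-\beta P(u) = \widetilde{A}(u)+\beta D(u)$ (where $\widetilde{A}$ carries the shifted coefficients $K_{ij}-\beta$ and $D$ has columns $u_i$), and then proves two separate positive-semidefiniteness claims, one for $D^2h(u)D(u)$ (a rank-one all-ones structure) and one for $D^2h(u)\widetilde{A}(u)$ via the mobility-matrix trick and symmetrization. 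You instead keep the full matrix, obtain the exact identity
$$
z^T D^2h(u)A(u)z \;=\; \frac12\sum_{0\le i\ne j\le n} K_{ij}\,u_iu_j\,(v_i-v_j)^2,
\qquad v_i = \frac{z_i}{u_i}, \quad v_0 = -\frac{\sum_{i=1}^n z_i}{1-\rho_u},
$$
and conclude with the variance identity together with the observation that your choice of $v_0$ forces the weighted mean $\sum_{i=0}^n u_iv_i$ to vanish. Your version buys an exact, manifestly nonnegative expression for the full quadratic form — a complete-graph Dirichlet energy over all $n+1$ species, which makes transparent why reinstating the eliminated species is the right move — whereas the paper's decomposition is more modular and closer in spirit to the argument it cites from the J\"ungel--Zamponi work. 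One further point in your favor: the constant you use, $\alpha = \min_{0\le i\ne j\le n}K_{ij}$, is the one the argument genuinely requires (both yours and the paper's proof need $K_{i0}\ge\alpha$), and it is also the constant the paper's own proof works with (there denoted $\beta$); the range $1\le i\ne j\le n$ appearing in the lemma statement appears to be a typo.
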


The existence of global weak solutions to (\ref{eq:system_part}) is then a direct consequence of Theorem~\ref{th:Jungel} and Lemma~\ref{lem:cor}.

\medskip

Let us point out that the uniqueness of solutions to general systems of the form (\ref{eq:Jungeltrue}) 
remains an open theoretical question, at least up to our knowledge. It can be obtained in some particular cases. When the diffusion matrix $A$ is defined by (\ref{eq:defA}) and 
when all the diffusion coefficients $K_{ij}$ are identically equal 
to some constant $K>0$, the uniqueness of the solution can be trivially obtained since the system boils down to a set of $n$ 
decoupled heat equation for the evolution of the density of each species. 

%

\section{Case of non-zero flux boundary conditions and moving domain}\label{sec:flux}

In the sequel, we restrict the study to the case when $d=1$. In this section, we propose a model for the description of a PVD process and present theoretical results whose proofs 
are postponed to Section~\ref{sec:proofs}. The global existence of a weak solution is proved. 
The long-time behaviour of such a solution is studied in the case of constant external fluxes. Lastly, under the assumption 
that the coefficients $K_{ij}$ are chosen so that there is a unique solution to the system, we prove the existence of a solution to an optimization problem.

\subsection{Presentation of the model}\label{sec:modelflux}

For the sake of simplicity, we assume that non-zero fluxes are only imposed on the right-hand side of 
the domain occupied by the solid. At some time $t>0$, this domain is denoted by $\Omega_t := (0, e(t))$ where $e(t)>0$ models the thickness of the layer. 
Initially, we assume that the domain $\Omega_0$ occupied by the solid at time $t=0$ is the interval $(0,e_0)$ for some initial thickness $e_0 > 0$.

\medskip

The evolution of the thickness of the film $e(t)$ is determined by the external fluxes of the atomic species 
that are absorbed at its surface. 
More precisely, let us assume that there are $n+1$ different chemical species composing the solid layer and let $(\phi_0, \cdots, \phi_n)$ belong to $L^{\infty}_{\rm loc}(\R_+; \R_+^{n+1})$. For all $0\leq i \leq n$, the function $\phi_i(t)$
represents the flux of the species $i$ absorbed at the surface at time $t>0$ and is assumed to be non-negative. 
In this one-dimensional model, the evolution of the thickness of the solid is assumed to be given by
\begin{equation}\label{eq:evole1}
e(t):= e_0 + \int_0^t \sum_{i=0}^n \phi_i(s)\,ds.
\end{equation}
In the following, we will denote by $\varphi:= (\phi_1, \cdots , \phi_n)^T$ (see Figure~\ref{fig:fig1}).

\medskip


 \begin{figure}
 \centering
 \includegraphics[width=10cm, height=6cm]{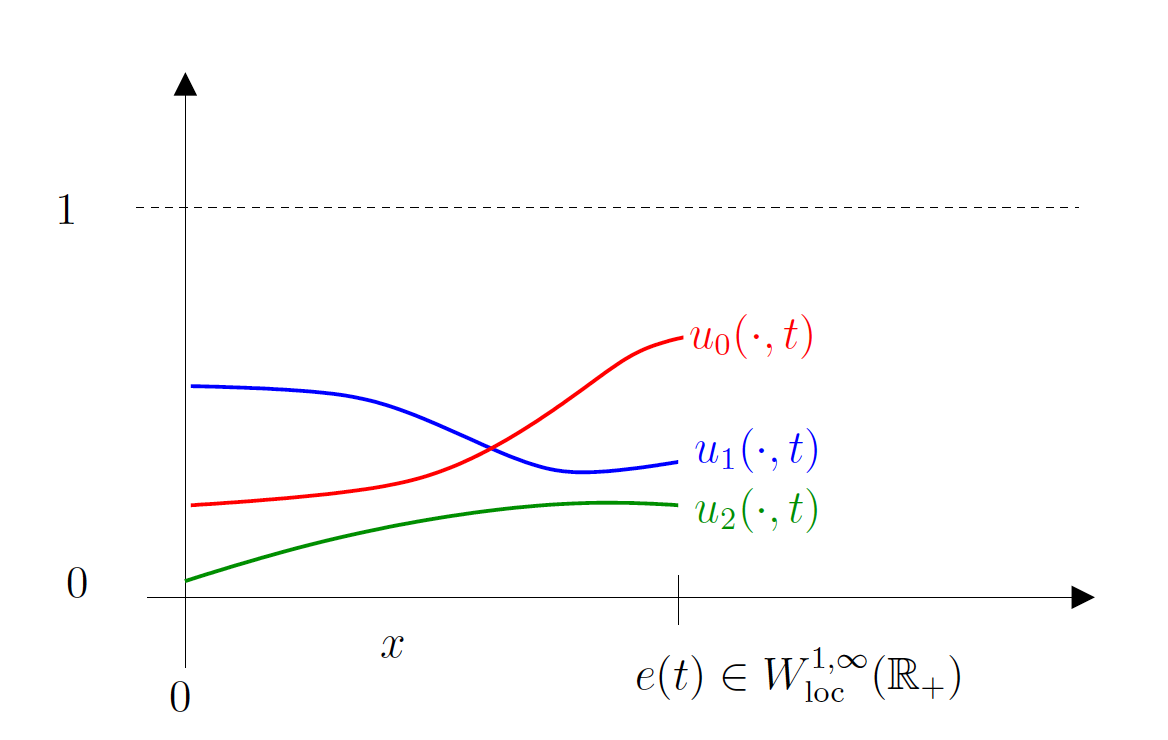}
 \caption{ {Illustration of the composition of the film layer at time $t$ in the case $n=2$}}
 \label{fig:fig1}
 \end{figure}

\medskip

For all $t\geq 0$ and $0\leq i \leq n$, the local concentration of species $i$ at time $t$ and point $x\in (0,e(t))$ is denoted by $u_i(t,x)$. The evolution of the vector $u:=(u_1,\cdots,u_n)$ 
is given by the system of cross-diffusion equations
\begin{equation}\label{eq:cross}
\partial_t u - \partial_x\left( A(u) \partial_x u\right) = 0, \mbox{ for } t\in \R_+^*, \; x\in (0,e(t)), 
\end{equation}
where $A: \overline{\cD}\to \R^{n\times n}$ is a well-chosen diffusion matrix satisfying (H1)-(H2). 

\medskip

We consider that for every $t>0$, the system satisfies the following conditions on the boundary $\partial \Omega_t$:
\begin{equation} \label{eq:bord}
\left( A(u) \partial_x u \right)(t, 0) = 0 \mbox{ and } \left( A(u) \partial_x u \right)(t, e(t)) + e'(t) u(t,e(t)) = \varphi(t). 
\end{equation}   

An easy calculation shows that these boundary conditions, in addition to~\eqref{eq:evole1} and~\eqref{eq:cross}, ensure that, for all $0 \leq i \leq n$, 
$$
\frac{d}{dt}\left( \int_{\Omega_t} u_i(t,x)\,dx\right) = \phi_i(t).
$$
Indeed, it holds that
\begin{align*}
\frac{d}{dt}\left( \int_{\Omega_t} u(t,x)\,dx\right)  &=  \int_{0}^{e(t)} \partial_t u(t,x) \,dx + e'(t)u(t,e(t)),\\
 & = \int_{0}^{e(t)} \partial_x\left( A(u)\partial_x u\right) + e'(t)u(t,e(t)),\\
  & = (A(u)\partial_x u)(t,e(t))  + e'(t)u(t,e(t)) - (A(u)\partial_x u)(t,0),\\ 
 & =  \varphi(t).\\
\end{align*}

 {The calculation for the $0^{th}$ species reads:
\begin{align*}
 \frac{d}{dt}\left( \int_{\Omega_t} u_0(t,x)\,dx\right) & = \frac{d}{dt}\left( |\Omega_t| - \sum_{i=1}^n \int_{\Omega_t} u_i(t,x)\,dx \right)\\
 & = e'(t) - \sum_{i=1}^n \frac{d}{dt}\left( \int_{\Omega_t} u_i(t,x)\,dx \right)\\
 & = \sum_{i=0}^n \phi_i(t) - \sum_{i=1}^n \phi_i(t) = \phi_0(t).\\
\end{align*}
}

%

\medskip 

To sum up, the final system of interest reads:
 \begin{equation} \label{eq:System_1D}
 \left \{ 
 \begin{array}{ll}
 e(t) = e_0 + \int_{0}^t \sum_{i=0}^n \phi_i(s) \,ds, & \quad \mbox{ for } t \in \R_+^*,\\
 \partial_t u - \partial_x \left( A(u) \partial_x u\right) = 0, & \quad \mbox{ for } t\in \R_+^*, \; x\in (0,e(t)),\\
 \left( A(u) \partial_x u \right)(t,0) = 0, & \quad \mbox{ for } t \in \R_+^*,\\
  \left( A(u) \partial_x u \right)(t,e(t)) + e'(t) u(t,e(t)) = \varphi(t), & \quad \mbox{ for } t \in \R_+^*,\\
 u(0,x) = u^0(x), &\quad \mbox{ for } x \in (0,e_0),
 \end{array} 
 \right. 
\end{equation} 
where $u^0\in L^1(0,e_0)$ is an initial condition satisfying $u^0(x)\in \cD$ for almost all $x\in (0, e_0)$. We assume in addition that $w^0:= Dh(u^0)$ belongs to $L^\infty((0,e^0); \R^n)$.

\subsubsection{Rescaled version of the model}\label{sec:modelrescaled}

We introduce here a rescaled version of system~\eqref{eq:System_1D}. For all $0\leq i \leq n$, $t\geq 0$ and $y\in (0,1)$, let us denote by $v_i(t,y):= u_i(t, e(t)y)$. It holds that
$$
\partial_t v(t,y) = \partial_t u(t,e(t)y) + e'(t) y \partial_x u(t, e(t)y) \; \mbox{ and } \; \partial_y v(t,y) = e(t) \partial_x u(t, e(t)y),
$$
where $v:=(v_1, \cdots, v_n)$.
Thus, $u$ is a solution of~\eqref{eq:System_1D} if and only if $v$ is a solution to the following system:
\begin{equation}\label{eq:rescaled}
\left\{
\begin{array}{ll}
\dps e(t) = e_0 + \int_0^t \sum_{i=0}^n \phi_i(s)\,ds, & \mbox{ for } t \in \R_+^*,\\
\partial_t v - \frac{1}{e(t)^2}\partial_y\left( A(v) \partial_y v\right) - \frac{e'(t)}{e(t)}y \partial_y v = 0, & \mbox{ for }(t,y)\in \R_+^* \times (0,1),\\ 
\frac{1}{e(t)}(A(v) \partial_yv)(t,1) + e'(t) v(t,1) = \varphi(t), & \mbox{ for } (t,y)\in \R_+^*\times (0,1),\\
\frac{1}{e(t)}(A(v)\partial_y v)(t,0) = 0, & \mbox{ for }(t,y)\in\R_+^* \times (0,1)\\
v(0,y) = v^0(y), & \mbox{ for }y \in (0,1),\\
\end{array}
\right .
\end{equation}
where $v^0(y):= u^0(e_0 y)$. 

Proving the existence of a global weak solution to (\ref{eq:System_1D}) is equivalent to proving the existence of a global weak solution to (\ref{eq:rescaled}). 

\medskip

 {
Actually, it can be seen that the entropy of the system (\ref{eq:rescaled}) satisfies a formal inequality at the continuous level which is at the heart of the proof of our existence result. Indeed, let us denote by 
$$
\mathcal{E}(t):= \int_0^1 h(v(t,y))\,dy,
$$
where $v$ is a solution to (\ref{eq:rescaled}). Then, formal calculations yield that
\begin{align*}
\frac{d\mathcal{E}}{dt}(t) & = \int_0^1 \partial_t v(t,y) \cdot Dh(v(t,y))\,dy\\
& = \frac{1}{e(t)^2} \int_{0}^1 \partial_y\left( A(v(t,y)) \partial_y v(t,y)\right) \cdot Dh(v(t,y))\,dy + \frac{e'(t)}{e(t)} \int_0^1 y \partial_y v(t,y) \cdot Dh(v(t,y))\,dy\\
& = -\frac{1}{e(t)^2} \int_{0}^1  \partial_y v(t,y) \cdot D^2h(v(t,y)) A(v(t,y)) \partial_y v(t,y)\,dy \\
& + \frac{1}{e(t)^2} \left( A(v(t,1))\partial_y v(t,1)\right) \cdot Dh(v(t,1))  + \frac{e'(t)}{e(t)}\int_0^1 y \partial_y (h(v(t,y)))\,dy\\
& = -\frac{1}{e(t)^2}  \int_{0}^1 \partial_y v(t,y) \cdot D^2h(v(t,y)) A(v(t,y)) \partial_y v(t,y)\,dy + \frac{1}{e(t)}\left( \varphi(t) - e'(t)v(t,1)\right) \cdot  Dh(v(t,1))\\
& + \frac{e'(t)}{e(t)}h(v(t,1)) - \frac{e'(t)}{e(t)} \int_0^1  h(v(t,y))\,dy.\\
\end{align*}
Denoting by $\overline{f}(t):= \frac{\varphi(t)}{e'(t)}$, it holds that $\overline{f}(t)\in \overline{\cD}$ for all $t>0$. Besides, using assumption (H2), we obtain that
$$
- \int_0^1 \partial_y v(t,y) \cdot D^2h(v(t,y)) A(v(t,y)) \partial_y v(t,y)\,dy \leq 0,
$$
which yields that
\begin{align*}
\frac{d\mathcal{E}}{dt}(t) & \leq \frac{e'(t)}{e(t)} \left[ h(v(t,1) + Dh(v(t,1)) \cdot \left( \overline{f}(t) - v(t,1) \right) - \int_0^1 h(v(t,y))\,dy \right].
\end{align*}
Using the convexity of $h$, we obtain that $h(v(t,1) + Dh(v(t,1)) \cdot \left( \overline{f}(t) - v(t,1) \right) \leq h(\overline{f}(t))$, so that
\begin{equation}\label{eq:formalineq}
\frac{d\mathcal{E}}{dt}(t) \leq \frac{e'(t)}{e(t)}\left[ h(\overline{f}(t))  - \int_0^1 h(v(t,y))\,dy \right].
\end{equation}
Inequality (\ref{eq:formalineq}) is not an entropy dissipation inequality in the sense that the quantity $\mathcal{E}(t)$ may increase with time. However, 
using the fact $e' \in L^\infty_{\rm loc}(\R_+;\R_+)$ and assumption (H3), it implies that the quantity $\mathcal{E}(t)$ cannot blow up in finite time, which is sufficient for our purpose. 
}

\subsection{Theoretical results}

\subsubsection{Global in time existence of weak solutions}\label{sec:existence}

Our first result deals with the global in time existence of bounded weak solutions to \eqref{eq:rescaled} (and thus to (\ref{eq:System_1D})). 
\begin{theorem}\label{th:moving}
Let $\cD:=\{(u_1, \cdots, u_n)^T \in (\R_+^*)^n, \; \sum_{i=1}^n u_i < 1\} \subset (0,1)^n$. Let $A: \overline{\cD}\to \R^{n\times n}$ 
be a matrix-valued functional satisfying $A\in \cC^0(\overline{\cD}; \R^{n\times n})$ and assumptions (H1)-(H2) of Theorem~\ref{th:Jungel} for some well-chosen 
entropy density $h: \overline{\cD} \to \R$. We assume in addition that 
 \begin{itemize}
  \item[(H3)] $h\in \cC^0(\overline{\cD})$.
 \end{itemize}
Let $e_0>0$, $u^0\in L^1((0,e_0); \cD)$ so that $w^0:= (Dh)^{-1}(u^0) \in L^\infty((0,e_0); \R^n)$ and $(\phi_0, \cdots , \phi_n)\in L^{\infty}_{\rm loc}(\R_+; \R_+^{n+1})$. Let us define for almost all $y\in (0,1)$, 
$v^0(y):=u^0(e_0 y)$ and $\varphi:=(\phi_1, \cdots, \phi_n)^T$. Then, there exists a weak solution $v$ with initial condition $v^0$ to~\eqref{eq:rescaled} 
such that for almost all $(t,y)\in \R_+^*\times (0,1)$, $v(t,y)\in \overline{\cD}$. Besides,
$$
v\in L^2_{\rm loc}(\R_+; H^1((0,1);\R^n)) \mbox{ and }\partial_t v \in L^2_{\rm loc}(\R_+; (H^1((0,1);\R^n))').
$$
In particular, $v \in \cC^0(\R_+; L^2((0,1); \R^n))$.
\end{theorem}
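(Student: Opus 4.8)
The plan is to adapt the boundedness by entropy strategy of~\cite{Jungel} underlying Theorem~\ref{th:Jungel}, the genuinely new features being the time-dependent coefficient $\frac{1}{e(t)^2}$, the convective term $\frac{e'(t)}{e(t)}\,y\,\partial_y v$ produced by the rescaling, and the non-zero flux condition at $y=1$, which prevents $\mathcal{E}$ from being a Lyapunov functional. A first simplification is that $e(\cdot)$ is explicitly determined by the fluxes through the first line of~\eqref{eq:rescaled}, so it is a \emph{known} coefficient rather than a genuine unknown coupled to $v$. On any finite interval $[0,T]$ one has $e_0 \leq e(t) \leq e_0 + T\,\|\sum_i \phi_i\|_{L^\infty(0,T)}$, so that $\frac{1}{e(t)^2}$ is bounded and bounded away from zero and $\frac{e'(t)}{e(t)}$ is bounded; the problem is thus uniformly parabolic on $[0,T]$.

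First I would work in the entropy variable $w := Dh(v)$. By (H1) and property (P3), the map $v = (Dh)^{-1}(w)$ automatically takes values in $\cD$ for any $w$, so the pointwise constraint $v(t,y)\in\overline{\cD}$ is built into the formulation; this is exactly what makes the method ``bounded by entropy''. I would then set up a two-parameter approximation of~\eqref{eq:rescaled}: an implicit Euler discretization in time with step $\tau>0$, combined with an elliptic regularization adding the bilinear form $\varepsilon \int_0^1 (\partial_y w \cdot \partial_y \psi + w \cdot \psi)$ to restore $H^1((0,1);\R^n)$-coercivity in the unknown $w$. At each time step the resulting nonlinear elliptic system for $w^k$, given $w^{k-1}$, would be solved by a Leray--Schauder fixed-point argument, the required a priori bound being furnished by the discrete entropy estimate below.

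The heart of the proof is the discrete analogue of the formal computation leading to~\eqref{eq:formalineq}. Testing the (discrete, regularized) equation against $w^k = Dh(v^k)$, one uses (H2) to keep with the correct sign the diffusion term $\int_0^1 \partial_y v \cdot D^2h(v)\,A(v)\,\partial_y v \geq \alpha \sum_i \int_0^1 u_i^{2m_i-2}(\partial_y v_i)^2$, integrates by parts in the convective term so that the boundary value $h(v(t,1))$ emerges, substitutes the flux boundary condition, and finally invokes the convexity of $h$ to obtain $h(v(t,1)) + Dh(v(t,1)) \cdot (\overline{f}(t) - v(t,1)) \leq h(\overline{f}(t))$, with $\overline{f}(t) = \varphi(t)/e'(t) \in \overline{\cD}$. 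Because $h$ is continuous on the compact set $\overline{\cD}$ by (H3), the term $h(\overline{f}(t))$ is bounded by $\|h\|_{L^\infty(\overline{\cD})}$; a Gronwall argument with integrating factor $e(t)$ applied to~\eqref{eq:formalineq} then bounds $\mathcal{E}(t)$ uniformly in time, even though $\mathcal{E}$ may increase. Integrating the full inequality (keeping the dissipation term) over $[0,T]$ and using that $h$ is bounded from below yields a uniform (in $\tau,\varepsilon$) bound on $\int_0^T \int_0^1 \sum_i u_i^{2m_i-2}(\partial_y v_i)^2$; since $u_i\in(0,1)$ and $m_i\leq 1$ force $u_i^{2m_i-2}\geq 1$, this directly controls $\partial_y v$ in $L^2((0,T)\times(0,1))$, while the regularizing terms control $\varepsilon^{1/2}w$ in $L^2(0,T;H^1)$ and are allowed to degenerate as $\varepsilon\to 0$.

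Finally I would let $\tau\to 0$ and then $\varepsilon\to 0$. Reading the equation in weak form gives a bound on $\partial_t v$ in $L^2(0,T;(H^1((0,1);\R^n))')$ (using boundedness of $A$ on $\overline{\cD}$, the gradient estimate, and the explicit flux forcing at the boundary), so the Aubin--Lions lemma provides a subsequence converging strongly in $L^2((0,T)\times(0,1))$ and almost everywhere; continuity of $A$ and of $h$ then identifies the nonlinear diffusion and boundary terms in the limiting weak formulation, and the $\varepsilon$-terms vanish. The regularity $v \in \cC^0(\R_+;L^2((0,1);\R^n))$ follows from $v \in L^2_{\rm loc}(\R_+;H^1)$ together with $\partial_t v \in L^2_{\rm loc}(\R_+;(H^1)')$. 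I expect the main obstacle to be not the limit passage but the faithful discretization of the boundary and convective terms: one must check that the manipulations producing~\eqref{eq:formalineq} survive the implicit-in-time, $\varepsilon$-regularized scheme (in particular that the contribution at $y=1$ is treated consistently), and handle separately the degenerate set where $e'(t)=0$ — equivalently all fluxes vanish — on which $\overline{f}(t)$ is undefined but the right-hand side of~\eqref{eq:formalineq} vanishes and $\mathcal{E}$ is non-increasing.
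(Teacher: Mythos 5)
Your proposal is correct and follows essentially the same route as the paper's proof: entropy variables $w=Dh(v)$, implicit Euler plus $\varepsilon$-elliptic regularization solved at each step by Leray--Schauder with the discrete entropy inequality (using (H2), integration by parts of the convective term, the flux boundary condition, convexity of $h$ against $\overline{f}(t)$, and (H3) to bound $h(\overline{f}(t))$) furnishing both the fixed-point bound and the uniform estimates, then Aubin--Lions and a.e.\ convergence to identify the nonlinear terms in the limit. The only point you gloss over is that $L^\infty_{\rm loc}$ fluxes cannot be evaluated pointwise at the discrete times $k\tau$; the paper handles this by inserting an extra approximation layer of continuous fluxes $\phi_{i,p}$ converging weakly-* to $\phi_i$ (with $e_p\to e$ strongly in $L^\infty(0,T)$) and taking a joint limit in $(p,\varepsilon,\tau)$ --- a technicality, not a flaw in your strategy.
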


Let us point out that the example described in Section~\ref{sec:example} satisfies all the assumptions 
of Theorem~\ref{th:moving} since the entropy density $h$ defined by (\ref{eq:defh}) belongs to $\cC^0(\overline{\cD})$. Let us also point here that the form of (\ref{eq:rescaled}) is different 
from the system considered in~\cite{Jungel} through i) the boundary conditions and ii) the existence of the drift term $\frac{e'(t)}{e(t)}y \partial_y v$. 

The strategy of proof developped in~\cite{Burger,Jungel} is still adapted to our case though, because a discrete entropy inequality can still be obtained. 
The proof of Theorem~\ref{th:moving} is given in full details in Section~\ref{sec:proof_exists}.

\subsubsection{Long-time behaviour for constant fluxes}\label{sec:longtime}

In the case when the fluxes are constant in time, we obtain long-time asymptotics for the functions $v_i$, provided that 
the entropy density $h$ is given by (\ref{eq:defh}). More precisely, the following result holds:
\begin{prop}\label{prop:longtime}
Under the assumptions of Theorem~\ref{th:moving}, let us make the following additional hypotheses:
\begin{itemize}
 \item[(T1)] for all $0\leq i \leq n$, there exists $\overline{\phi}_i > 0$ so that $\phi_i(t) = \overline{\phi}_i$, for all $t\in \R_+$; 
 \item[(T2)] for all $u\in \overline{D}$, the entropy density $h$ can be chosen so that $h(u) = \sum_{i=1}^n u_i \log u_i + (1-\rho_u) \log (1-\rho_u)$.
\end{itemize}
For all $0\leq i \leq n$, let us define  {$\overline{f}_i:= \frac{\overline{\phi}_i}{\sum_{j=0}^n \overline{\phi}_j}$ and by $\overline{f}:=(\overline{f}_i)_{1\leq i \leq n}\in \cD$. Let us also denote by
$$
\overline{h}: \left\{ \begin{array}{ccc}
                       \overline{\cD} & \mapsto & \R\\
u & \mapsto & \sum_{i=1}^n u_i \ln \left( \frac{u_i}{\overline{f}_i}\right) + \left(1-\rho_u\right) \ln \left( \frac{1-\rho_u}{1-\rho_{\overline{f}}}\right)\\
                      \end{array}\right.
$$
the relative entropy associated to $h$ and $\overline{f}$. Then, there exists a global weak solution $v$ to (\ref{eq:rescaled}) and a 
constant $C>0$ such that
\begin{equation}\label{eq:relentropy}
\int_0^1 \overline{h}\left( v(t,y)\right)\,dy \leq \frac{C}{t+1},
\end{equation}
and
\begin{equation}\label{eq:L1}
\forall 1\leq i \leq n, \quad \|v_i(t,\cdot) - \overline{f}_i\|_{L^1(0,1)} \leq \frac{C}{\sqrt{t+1}} \mbox{ and } \left\|\left( 1 - \rho_{v(t,\cdot)}\right) - \overline{f}_0\right\|_{L^1(0,1)} \leq \frac{C}{\sqrt{t+1}}.
\end{equation}
}
\end{prop}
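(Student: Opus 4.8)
\emph{The plan is to run the formal entropy computation that precedes the proposition, but with the relative entropy $\overline h$ in place of $h$, exploiting that with constant fluxes the target profile $\overline f$ is stationary.} First I would record the simplifications coming from (T1): with $\phi_i\equiv\overline\phi_i$ the thickness is affine, $e(t)=e_0+\Phi t$ with $\Phi:=\sum_{i=0}^n\overline\phi_i>0$, so $e'(t)=\Phi$ is constant and $\overline f(t)=\varphi(t)/e'(t)=\overline f\in\cD$ is constant in time. The decisive algebraic observation is that, for $h$ given by (T2), the relative entropy $\overline h$ is exactly the Bregman divergence of $h$ at $\overline f$:
\begin{equation*}
\overline h(u)=h(u)-h(\overline f)-Dh(\overline f)\cdot(u-\overline f)\geq 0,
\end{equation*}
with equality iff $u=\overline f$. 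Consequently $D\overline h=Dh-Dh(\overline f)$ and $D^2\overline h=D^2h$, so the dissipation term built from $D^2\overline h\,A$ is identical to the one governed by (H2), and $\overline h$ inherits the convexity of $h$.

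Next I would differentiate $\overline{\mathcal E}(t):=\int_0^1\overline h(v(t,y))\,dy$ by repeating verbatim the manipulation leading to (\ref{eq:formalineq}): plug in (\ref{eq:rescaled}), integrate by parts the diffusion term (the boundary contribution vanishes at $y=0$ and equals $\tfrac1e(\varphi-e'v(t,1))\cdot D\overline h(v(t,1))$ at $y=1$ by the flux boundary condition), and integrate by parts the drift term using $\partial_y\overline h(v)=D\overline h(v)\cdot\partial_y v$. Using (H2) to discard the dissipation integral and $\varphi=e'\overline f$, this gives
\begin{equation*}
\frac{d\overline{\mathcal E}}{dt}(t)\leq\frac{e'(t)}{e(t)}\Big[\overline h(v(t,1))+D\overline h(v(t,1))\cdot(\overline f-v(t,1))\Big]-\frac{e'(t)}{e(t)}\,\overline{\mathcal E}(t).
\end{equation*}
Here the key gain over (\ref{eq:formalineq}) is that $\overline f$ is the \emph{exact} target: convexity of $\overline h$ together with $\overline h(\overline f)=0$ forces the bracket to be $\leq\overline h(\overline f)=0$, so the source term disappears entirely and I obtain the clean differential inequality $\frac{d\overline{\mathcal E}}{dt}\leq-\frac{e'(t)}{e(t)}\overline{\mathcal E}(t)$.

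From here the decay follows by recognizing $\frac{e'(t)}{e(t)}=\frac{d}{dt}\ln e(t)$, whence $\frac{d}{dt}\big(e(t)\overline{\mathcal E}(t)\big)\leq 0$, i.e. $t\mapsto e(t)\overline{\mathcal E}(t)$ is non-increasing. The initial value $\overline{\mathcal E}(0)=\int_0^1\overline h(v^0)\,dy$ is finite because $w^0=Dh(u^0)\in L^\infty$ keeps $v^0$ in a compact subset of $\cD$, bounded away from $\partial\cD$. Thus $\overline{\mathcal E}(t)\leq e_0\overline{\mathcal E}(0)/e(t)=e_0\overline{\mathcal E}(0)/(e_0+\Phi t)\leq C/(t+1)$, which is (\ref{eq:relentropy}). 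For the $L^1$ estimate (\ref{eq:L1}) I would invoke the Csisz\'ar--Kullback--Pinsker inequality pointwise in $y$: setting $v_0:=1-\rho_v$ and $\overline f_0:=1-\rho_{\overline f}$, the vectors $(v_i(t,y))_{0\leq i\leq n}$ and $(\overline f_i)_{0\leq i\leq n}$ are probability vectors and $\overline h(v(t,y))=\sum_{i=0}^n v_i\ln(v_i/\overline f_i)$ is their relative entropy, so Pinsker gives $\big(\sum_{i=0}^n|v_i(t,y)-\overline f_i|\big)^2\leq 2\,\overline h(v(t,y))$; integrating over $(0,1)$ and applying Cauchy--Schwarz (the domain has measure one) yields $\sum_{i=0}^n\|v_i(t,\cdot)-\overline f_i\|_{L^1(0,1)}\leq\sqrt{2\,\overline{\mathcal E}(t)}\leq C/\sqrt{t+1}$, and bounding each summand by the sum gives (\ref{eq:L1}).

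\emph{The main obstacle is that this entire computation is only formal,} exactly as for (\ref{eq:formalineq}): the weak solution of Theorem~\ref{th:moving} has only $v\in L^2_{\rm loc}(\R_+;H^1)$ regularity, so the chain rule for $\overline h(v)$, the use of the trace $v(t,1)$, and the time-differentiation of $\overline{\mathcal E}$ are not licit as written. To make the argument rigorous I would carry the relative-entropy estimate through the approximation scheme used to construct the solution in Section~\ref{sec:proof_exists} (implicit time discretization with the entropy-variable regularization), establishing a discrete counterpart of the monotonicity of $e(t)\overline{\mathcal E}(t)$, and then pass to the limit using the weak lower semicontinuity of the convex functional $u\mapsto\int\overline h(u)$. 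The delicate points in that passage are the control of the boundary/source term at $y=1$ in the discrete setting and the compactness needed to identify the traces, which is precisely where the reasoning must be adapted from the fixed-domain, no-flux framework of \cite{Jungel}.
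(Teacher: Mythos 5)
Your proposal is correct and its core is the same as the paper's: the paper also observes that $\overline{h}$ is a valid entropy density for $A$ (since $D\overline{h}=Dh+\overline{g}$ for a constant vector $\overline{g}$ and $D^2\overline{h}=D^2h$), runs the formal inequality (\ref{eq:formalineq}) with $\overline{h}$ in place of $h$ to get monotonicity of $e(t)\overline{\mathcal{E}}(t)$, and makes this rigorous exactly as you anticipate: it reruns Lemma~\ref{lem:intermediate} with the entropy $\overline{h}$ and constant fluxes, which turns the discrete entropy inequality (\ref{eq:entropy}) into $(e_0+V(k+1)\tau)\int_0^1\overline{h}(\overline{v}(\overline{w}^{\epsilon,k}))\leq(e_0+V\tau)\int_0^1\overline{h}(\overline{v}(w^0))$, and then passes to the limit $\epsilon,\tau\to 0$ along the compactness framework of Theorem~\ref{th:moving} (using a.e. convergence and dominated convergence, since $\overline{h}$ is bounded on $\overline{\cD}$, rather than the lower semicontinuity you invoke -- both work). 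The one place you genuinely diverge is the derivation of (\ref{eq:L1}): you apply Pinsker's inequality pointwise in $y$ to the probability vectors $\left(v_i(t,y)\right)_{0\leq i\leq n}$ and $\left(\overline{f}_i\right)_{0\leq i\leq n}$ on the finite set $\{0,\dots,n\}$, then integrate and use Cauchy--Schwarz; the paper instead applies the Csisz\`ar--Kullback inequality in the spatial variable, comparing $v_i(t,\cdot)$ to its spatial mean $\overline{v}_i(t)=\int_0^1 v_i(t,y)\,dy$, and must then compute $\overline{v}_i(t)=\bigl(e_0\int_0^1 v_i^0+t\overline{\phi}_i\bigr)/(e_0+Vt)$ explicitly from the mass balance of the scheme to show $|\overline{v}_i(t)-\overline{f}_i|\leq C/(e_0+Vt)$. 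Your compositional (in-$i$) use of Pinsker is more direct: it compares to the target $\overline{f}$ in one shot and avoids both the mass-balance computation and the logarithmic correction terms the paper has to estimate; the paper's route, on the other hand, yields the explicit formula for the spatial means as a by-product, which is of independent interest (it quantifies how the mean composition relaxes to $\overline{f}$ even without the entropy estimate).
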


The proof of Proposition~\ref{prop:longtime} is given in Section~\ref{sec:proof_time}. Numerical results presented in Section~\ref{sec:numerics} illustrate the rate of convergence of 
the rescaled concentrations to constant profiles in $\mathcal{O}\left( \frac{1}{t} \right)$.

\medskip

 {Let us comment here on assumption (T2). Actually, in the proof, we use the following properties of the logarithmic entropy density: 
\begin{itemize}
 \item $\overline{h}\geq 0$ and if $u\in\overline{\cD}$ satisfies $\overline{h}(u) = 0$, then necessarily $u = \overline{f}$; 
 \item There exists a constant vector $g\in \R^n$ such that for all $u\in \cD$, $D\overline{h}(u) = Dh(u) + g$. 
 \item The relative entropy density $\overline{h}$ satisfies a Csiz\`ar-Kullback type inequality. 
\end{itemize}
Similar long-time asymptotics results can be obtained for general entropy densities satisfying these three properties. For the sake of simplicity, we chose to restrict ourselves to the case of logarithmic entropy density in 
Proposition~\ref{prop:longtime}.}

\medskip

 {The central ingredient of the proof is the following formal entropy inequality. In the case when $h$ is given by (\ref{eq:defh}), it can be easily seen that $\overline{h}$ is also a valid entropy density for the diffusion 
coefficient $A$ in the sense that $\overline{h}$ also satisfies assumptions (H1)-(H2)-(H3). Thus, inequality (\ref{eq:formalineq}) holds with $\overline{h}$ instead of $h$ so that
$$
\frac{d \overline{\mathcal{E}}}{dt}(t) \leq \frac{e'(t)}{e(t)}\left[ \overline{h}(\overline{f})  - \int_0^1 \overline{h}(v(t,y))\,dy \right]= \frac{e'(t)}{e(t)}\left[ \overline{h}(\overline{f})  -\overline{\mathcal{E}}(t) \right],
$$
where for all $t>0$,  $\overline{\mathcal{E}}(t):= \int_0^1 \overline{h}(v(t,y))\,dy$. 
Denoting by $V:= \sum_{i=0}^n\overline{\phi}_i$, it holds that $e'(t) = V$ and $e(t) = e_0 + Vt$ for all $t\geq 0$. Finally, using the fact that $\overline{h}\geq 0$ and that $\overline{h}(\overline{f})= 0$, we obtain that
$$
\left(\frac{e_0}{V} + t\right) \frac{d  \overline{\mathcal{E}}}{dt}(t)+ \overline{\mathcal{E}}(t)  = \frac{d}{dt}\left( \left(\frac{e_0}{V} + t\right) \overline{\mathcal{E}}(t) \right) \leq 0.
$$
This inequality implies that there exists a constant $C>0$ such that for all $t\geq 0$, 
$$
\overline{\mathcal{E}}(t) \leq \frac{C}{t+1}.
$$
The rates on the $L^1$ norm of the solutions are then obtained using the Csiz\`ar-Kullback inequality. 
}

\subsubsection{Optimization of the fluxes}\label{sec:control}

As mentioned in the introduction, our main motivation for studying this system is the control of the gazeous fluxes injected during a PVD process. 
 {It is assumed here that the wafer remains in the hot chamber where the different atomic species are injected during a time $T>0$. The cross-diffusion phenomena occur in the bulk of the 
thin film layer because of the high temperatures that are imposed during the process. Once the wafer is taken out of the chamber, the composition of the film is \itshape freezed \normalfont and 
no diffusion phenomena take place anymore. The profiles of the local volumic fractions of the different chemical species in the film thus remain unchanged after the time $T$. It is of practical interest to 
adapt the fluxes through time so that these final concentration profiles are as close as possible to target functions chosen a priori.} 

\medskip

Let $e_0>0$ be the initial thickness of the solid. In practice, the maximal value of the fluxes which can be injected is limited due to device 
constraints. Let $F>0$ and let us then denote by $\Xi := \left\{ \Phi \in L^\infty((0,T); \R_+^{n+1}), \quad \|\Phi\|_{L^\infty} \leq F\right\}$. 
For all $\Phi:=(\phi_0, \cdots, \phi_n)\in \Xi$, we denote by $e_\Phi: t\in [0,T] \mapsto e_0 + \int_0^t \sum_{i=0}^n \phi_i(s)\,ds$ the time-dependent thickness of the film, and by 
$v_\Phi$ a solution to (\ref{eq:rescaled}) associated with the external fluxes $\Phi$. 

\medskip

Let us point out here the uniqueness of a solution to (\ref{eq:System_1D}) (or (\ref{eq:rescaled})) remains an open problem in general.  {When the diffusion matrix $A$ is defined by (\ref{eq:defA}), the only case for which uniqueness of a global solution can be obtained is the trivial case where the cross-diffusion coefficients 
$K_{ij}$ are identical to some constant $K>0$ for all $0\leq i \neq j \leq n$. Indeed, in this case, it can be seen that the system (\ref{eq:rescaled}) can be written as a set of $n$ independent advection-diffusion PDEs 
on each of the rescaled concentration profiles $v_i$ ($1\leq i \leq n$). Thus, we will have to make some assumption on the cross-diffusion coefficients $(K_{ij})_{0\leq i \neq j \leq n}$ in the general case. }

\medskip

We make the following assumption on the diffusion matrix $A$:
\begin{itemize}
 \item[(C1)] For any $\Phi \in \Xi$, there exists a unique global weak solution $v_\Phi$ to system~\eqref{eq:rescaled} so that for almost all $(t,y)\in \R_+^* \times (0,1)$, $v_\Phi(t,y) \in \overline{\cD}$.
\end{itemize}

The goal of the optimization problem consists in the identification of optimal time-dependent non-negative functions 
$\Phi \in \Xi$ so that the final thickness of the film $e_\Phi(T)$ and the (rescaled) concentration profiles for the different chemical species 
$v_\Phi(T,\cdot)$ at the end of the fabrication process are as close as possible to desired targets denoted by 
$e_{\rm opt} > e_0$ and $v_{\rm opt} \in L^2((0,1); \overline{\cD})$.

The real-valued functional $\cJ: \Xi \to \R$ defined by
\begin{equation}
\forall \Phi \in \Xi, \quad \cJ(\Phi):= |e_\Phi(T) - e_{\rm opt}|^2 + \|v_\Phi(T,\cdot) - v_{\rm opt}\|^2_{L^2(0,1)},
\label{eq:cost} 
\end{equation} 
is the cost function we consider here. 
More precisely, we have the following result, which is proved in Section~\ref{sec:proof_optimal}. 
\begin{prop}\label{prop:optimal}
Under the assumptions of Theorem~\ref{th:moving}, let us make the additional assumption (C1). Then, the functional $\mathcal{J}$ is well-defined and there exists 
a minimizer $\Phi^* \in \Xi$ to the minimization problem
\begin{equation}
\label{eq:optimal}
\Phi^* \in \mathop{\mbox{argmin}}_{\Phi \in \Xi} \cJ(\Phi). 
\end{equation}
\end{prop}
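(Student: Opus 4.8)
The plan is to apply the direct method of the calculus of variations. First I would verify that $\cJ$ is well-defined on $\Xi$: for each $\Phi \in \Xi$, assumption (C1) furnishes a \emph{unique} global weak solution $v_\Phi$ with $v_\Phi(t,y)\in\overline{\cD}$, and Theorem~\ref{th:moving} guarantees $v_\Phi \in \cC^0(\R_+; L^2((0,1);\R^n))$, so the trace $v_\Phi(T,\cdot)$ is a well-defined element of $L^2((0,1);\R^n)$; since $e_\Phi(T)=e_0+\int_0^T\sum_{i=0}^n\phi_i$ is finite, $\cJ(\Phi)$ is a finite nonnegative number. I then fix a minimizing sequence $(\Phi^k)_k\subset\Xi$ with $\cJ(\Phi^k)\to m:=\inf_{\Phi\in\Xi}\cJ(\Phi)\ge 0$.

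The compactness on the control side is the easy part. Since $\|\Phi^k\|_{L^\infty}\le F$, Banach--Alaoglu yields a subsequence (not relabelled) converging weakly-$*$ in $L^\infty((0,T);\R^{n+1})$ to some $\Phi^*$. The admissible set $\Xi$ is weakly-$*$ closed, being the intersection of the nonnegativity cone (weakly-$*$ closed, by testing against nonnegative $L^1$ functions) with the closed ball of radius $F$; hence $\Phi^*\in\Xi$. Testing the weak-$*$ convergence against $\1_{[0,T]}\in L^1$ gives $e_{\Phi^k}(T)\to e_{\Phi^*}(T)$.

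The heart of the argument is the passage to the limit on the state side. I would first revisit the proof of Theorem~\ref{th:moving} to check that its a priori estimates are \emph{uniform over} $\Phi\in\Xi$: since $e_\Phi'(t)=\sum_{i=0}^n\phi_i(t)\in[0,(n+1)F]$ and $e_\Phi(t)\ge e_0$, the entropy bound coming from (\ref{eq:formalineq}) and the attendant spatial $H^1$ estimates hold with constants depending only on $e_0$, $F$, $T$ and the initial data. This gives, uniformly in $k$, a bound on $v_{\Phi^k}$ in $L^2(0,T;H^1((0,1);\R^n))$, on $\partial_t v_{\Phi^k}$ in $L^2(0,T;(H^1)')$, and the pointwise constraint $|v_{\Phi^k}|\le 1$ since $v_{\Phi^k}\in\overline{\cD}$. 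By the Aubin--Lions--Simon lemma I extract a subsequence with $v_{\Phi^k}\to v^*$ strongly in $L^2((0,T)\times(0,1))$ (hence a.e.), weakly in $L^2(0,T;H^1)$, and in $\cC^0([0,T];(H^1)')$. Continuity of $A$ together with a.e. convergence and dominated convergence gives $A(v_{\Phi^k})\to A(v^*)$ in every $L^p$, $p<\infty$, which combined with $\partial_y v_{\Phi^k}\rightharpoonup\partial_y v^*$ lets me pass to the limit in the nonlinear flux $A(v_{\Phi^k})\partial_y v_{\Phi^k}$ in the weak formulation; the drift and boundary terms pass using the same convergences together with $e_{\Phi^k}\to e_{\Phi^*}$ and $e_{\Phi^k}'\rightharpoonup^* e_{\Phi^*}'$. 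Thus $v^*$ is a weak solution of (\ref{eq:rescaled}) for the fluxes $\Phi^*$, and by uniqueness (C1), $v^*=v_{\Phi^*}$.

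Finally I conclude by lower semicontinuity. The constraint $v_{\Phi^k}(T,\cdot)\in\overline{\cD}$ makes $(v_{\Phi^k}(T,\cdot))_k$ bounded in $L^2((0,1);\R^n)$, so up to a further subsequence it converges weakly in $L^2$ to some $\zeta$; the $\cC^0([0,T];(H^1)')$-convergence forces $v_{\Phi^k}(T,\cdot)\to v_{\Phi^*}(T,\cdot)$ in $(H^1)'$, and uniqueness of limits identifies $\zeta=v_{\Phi^*}(T,\cdot)$. Since $e_{\Phi^k}(T)\to e_{\Phi^*}(T)$ and the $L^2$-norm is weakly lower semicontinuous,
\[
\cJ(\Phi^*)=|e_{\Phi^*}(T)-e_{\rm opt}|^2+\|v_{\Phi^*}(T,\cdot)-v_{\rm opt}\|_{L^2(0,1)}^2\le\liminf_k\cJ(\Phi^k)=m,
\]
so $\Phi^*$ realises the infimum. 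The main obstacle I anticipate is precisely this state-side compactness: securing the $\Phi$-uniform entropy and $H^1$ estimates and then passing to the limit both in the nonlinearity and in the time-$T$ trace, where assumption (C1) is indispensable to identify the limit as the genuine solution $v_{\Phi^*}$ rather than merely \emph{some} weak solution.
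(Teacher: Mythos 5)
Your proof is correct and follows essentially the same strategy as the paper's: the direct method with a minimizing sequence, weak-$*$ compactness of the fluxes in $L^\infty(0,T)$, uniform-in-$\Phi$ entropy and $H^1$ estimates inherited from the proof of Theorem~\ref{th:moving} (the paper explicitly tracks the dependence of its constants on $\|\Phi\|_{L^\infty(0,T)}$ for exactly this purpose), Aubin--Lions compactness and passage to the limit in the weak formulation, and identification of the limit state with $v_{\Phi^*}$ via assumption (C1). The only divergence is the last step: the paper invokes a compact injection of $L^2((0,T);H^1)\cap H^1((0,T);(H^1)')$ into $\cC((0,T);L^2)$ to obtain strong convergence of the time-$T$ traces and hence $\cJ(\Phi^m)\to\cJ(\Phi^*)$, whereas you use only weak $L^2$ convergence of the traces, identified through the $\cC^0([0,T];(H^1)')$ limit, and conclude by weak lower semicontinuity of the norm --- which suffices for the direct method and avoids relying on that delicate compactness claim.
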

Of course, uniqueness of such a solution $\Phi^*$ is not expected in general.

\section{Proofs} \label{sec:proofs}
\subsection{Proof of Lemma~\ref{lem:cor}}\label{sec:proof_cor}
Let us prove that the matrix-valued function $A$ defined in (\ref{eq:defA}) satisfies the assumptions of Theorem~\ref{th:Jungel} with the entropy functional $h$ given by (\ref{eq:defh}). 

\medskip

As mentioned in Section~\ref{sec:example}, the entropy density $h$ belongs to $\cC^0(\overline{\cD}; \R) \cap \cC^2(\cD;\R)$ (thus is bounded on $\overline{\cD}$), is strictly convex on $\cD$, and its derivative $Dh: \cD \to \R^n$ is invertible. 
As a consequence, $h$ satisfies assumption (H1) of Theorem~\ref{th:Jungel}.  

\medskip

Let us now prove that assumption (H2) of Theorem~\ref{th:Jungel} is satisfied with $m_i = \frac{1}{2}$ for all $1\leq i \leq n$. 
To this aim, we follow the same strategy of proof as the one used in~\cite{JungelZamponi2}. 
Let us prove that there exists  {$\beta >0$ such that for all $u\in \cD$, 
\begin{equation}\label{eq:ineg}
H(u)A(u) \geq \beta \Lambda(u),
\end{equation}
$$  \; \mbox{ where } H(u):= D^2h(u), \; \Lambda(u):= \mbox{diag}\left( \left(\frac{1}{u_i}\right)_{1\leq i \leq n}\right) \mbox{ and } \beta:= \min_{0\leq i\neq j \leq n} K_{ij}. $$
}

 {
This inequality implies (H2) with $\alpha = \beta$ and $m_i = \frac{1}{2}$ for all $1\leq i \leq n$. }

Let $u\in D$. We have for all $1\leq i,j \leq n$,  
$$
H_{ii}(u) = \frac{1}{u_i} + \frac{1}{1-\rho_u} \mbox{ and } H_{ij}(u)= \frac{1}{1-\rho_u} \mbox{ if } i \neq j. 
$$
Introducing $P(u) := (P_{ij}(u))_{1\leq i,j \leq n}$, where for all $1\leq i ,j \leq n$,  
$$
P_{ii}(u) = 1-u_i \mbox{ and } P_{ij}(u) = -u_i \mbox{ if } i \neq j, 
$$
it holds that $H(u) P(u) = \Lambda(u)$.  {Thus, $H(u) A(u) - \beta \Lambda(u) = H(u)(A(u) - \beta P(u))$. It can be easily checked that 
$A(u) - \beta P(u) = \widetilde{A}(u) + \beta D(u)$, where $\widetilde{A}(u)$ has the same structure as $A(u)$ but with diffusion coefficients $K_{ij}-\beta$ instead of $K_{ij}$, and 
$D(u) := (D_{ij}(u))_{1\leq i,j \leq n}$ where $D_{ij}(u) = u_i$ for all $1\leq i \leq n$.}

On the one hand, $H(u)D(u) = \frac{1}{1-\rho_u}Z$ where $Z$ is the $n\times n$ matrix whose all coefficients are identically equal to $1$.
Since the matrix $Z$ is a semi-definite positive matrix, so is $H(u)D(u)$. 

 {On the other hand, since $h$ is strictly convex on $\cD$, $H(u) \widetilde{A}(u)$ is semi-definite positive if and only if $\widetilde{M}(u):= \widetilde{A}(u) H(u)^{-1}$ is semi-definite positive. Indeed, for all $z \in \R^n$, 
we have $z^T H(u)\widetilde{A}(u) z = (H(u) z)^T \left(\widetilde{A}(u) H(u)^{-1}\right) (H(u) z)$. 
It can be observed that
$\widetilde{M}(u) = (\widetilde{M}_{ij}(u))_{1\leq i,j \leq n}$, where for all $1\leq i,j \leq n$,
$$
\widetilde{M}_{ii}(u) = (K_{i0}-\beta) (1-\rho_u) u_i + \sum_{1\leq j \neq i \leq n} (K_{ij}-\beta) u_i u_j \mbox{ and } \widetilde{M}_{ij}(u) = - (K_{ij}-\beta) u_i u_j \mbox{ if } j\neq i.
$$
For all $z = (z_1, \cdots, z_n)^T \in \R^n$, we have
\begin{align*}
 z^T \widetilde{M}(u) z  & = \sum_{i=1}^n (K_{i0}-\beta) (1-\rho_u) u_i z_i^2 + \sum_{i=1}^n \sum_{1\leq j \neq i \leq n} (K_{ij}-\beta) u_i u_j (z_i^2 - z_i z_j),\\
 & = \sum_{i=1}^n (K_{i0}-\beta) (1-\rho_u) u_i z_i^2 + \sum_{1\leq i\neq j \leq n} (K_{ij}-\beta) u_i u_j \left(\frac{1}{2}z_i^2 + \frac{1}{2}z_j^2- z_i z_j\right),\\
 & \geq 0.\\
\end{align*}
The matrix $\widetilde{M}(u)$ is indeed a semi-definite positive matrix. Hence we have proved inequality \eqref{eq:ineg}, which yields the desired result. 
}

\subsection{Proof of Theorem~\ref{th:moving}} \label{sec:proof_exists}

 {For the sake of simplicity, we will prove the existence of a solution $v$ on the finite time interval $[0,T]$ where $T>0$ is an arbitrary positive constant. 
Actually, the proof can be easily adapted to obtain the existence of a global solution for an infinite time horizon.}

\medskip

The proof follows similar lines as the proof of Theorem~2 of~\cite{Jungel} and is divided in three main steps.  {Firstly, an approximate time-discrete problem is introduced for which uniform bounds are proved in a second step. 
Lastly, passing to the limit in this approximate problem using the obtained bounds enables to obtain the existence of a weak solution.}

\medskip

\subsubsection{Step 1 : Approximate time-discrete problem} 

Let us first assume at this point that $\phi_0, \cdots, \phi_n$ belong to $\cC^0([0,T])$. 
\medskip

Let $N\in \N$, $\tau = \frac{T}{N}$ and $\epsilon >0$. For all $k\in \N^*$ so that $k\tau \leq T$, let us denote by $e_k:= e(k \tau)$, $e'_k:= e'(k\tau)$ and 
$\varphi_k = (\phi_{1,k}, \cdots, \phi_{n,k})^T := \varphi(k \tau)$. 
Let us also define
\begin{equation}\label{eq:deffk}
f_k:= \left\{\begin{array}{ll}
\frac{\varphi_k}{e'_k} &  \mbox{ if } e'_k >0,\\
0 & \mbox{ otherwise}, 
             \end{array} \right.
\end{equation}
so that $f_k \in \overline{D}$ and $\varphi_{k} = e'_k f_k$.

By assumption, $w^0(y) := Dh(v^0(y))$ belongs to $L^\infty((0,1); \R^n)$. In the rest of the proof, 
for any $w\in\R^n$, we denote by $v(w):= (Dh)^{-1}(w) = (v_i(w))_{1\leq i \leq n}$ and by $B(w):= M(v(w))$. 

\medskip
 
Let us already mention at this point that the (formal) weak formulation of (\ref{eq:rescaled}) reads as follows: for all $\psi \in L^2((0,T); H^1((0,1); \R^n))$, 
$$
\int_0^T \int_0^1 \partial_t v \cdot \psi + \int_0^T   \int_0^1\partial_y   \frac{1}{e^2} \psi \cdot (A(v) \partial_y v)  + \int_0^T \int_0^1 \frac{e'}{e} (v \cdot \psi + y v \cdot \partial_y \psi)  = \int_0^T\frac{1}{e} \varphi \cdot \psi(\cdot,1).
$$

\medskip

Let us first prove the following lemma.
\begin{lem}\label{lem:intermediate}
Assume that $\phi_0, \cdots, \phi_n \in \cC^0([0,T])$. Then, for all $k \in \N^*$  {such that $k\tau \leq T$}, there exists $w^k \in H^1((0,1); \R^n)$ solution of
\begin{align}\label{eq:approximate2}
&\frac{1}{\tau}\int_0^1 \left( v(w^k) - v(w^{k-1}) \right) \cdot \psi + \frac{1}{e_k^2}\int_0^1 \partial_y \psi \cdot (B(w^k) \partial_y w^k) + \epsilon \int_0^1 (\partial_y w^k \cdot \partial_y \psi + w^k\cdot \psi) \\ \nonumber
& + \frac{e'_k}{e_k} \int_0^1 (v(w^k) \cdot \psi + y v(w^k) \cdot \partial_y \psi) = \frac{1}{e_k}\varphi_k \cdot \psi(1),\\ \nonumber
\end{align}
for all $\psi \in H^1((0,1); \R^n)$. Besides, the following discrete inequality holds for all $k\in\N^*$  {such that $k\tau \leq T$},
\begin{align}\label{eq:entropy}
 &  \frac{1}{\tau} \int_0^1 h(v(w^k)) + \epsilon \int_0^1 \left(|\partial_y w^k|^2 + |w^k|^2\right) + \frac{1}{e_k^2}\int_0^1 \partial_y w^k \cdot (B(w^k) \partial_y w^k)\\ \nonumber
& \leq \frac{1}{\tau} \int_0^1 h(v(w^{k-1})) + \frac{e'_k}{e_k} \left( h(f_k) - \int_0^1 h(v(w^k)) \right). \\ \nonumber
\end{align}
\end{lem}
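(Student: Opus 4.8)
The plan is to prove existence of the discrete solution $w^k$ by a Galerkin-plus-topological-degree (or Leray--Schauder fixed point) argument, exactly in the spirit of Theorem~2 of~\cite{Jungel}, and then to derive the discrete entropy inequality~\eqref{eq:entropy} by testing the weak formulation~\eqref{eq:approximate2} against the natural test function. Proceeding by induction on $k$, I assume $w^{k-1}\in H^1((0,1);\R^n)$ is given (for $k=1$ we take $w^0=Dh(v^0)\in L^\infty$). The key structural observation is that the bilinear form appearing on the left of~\eqref{eq:approximate2} is coercive on $H^1((0,1);\R^n)$ thanks to the regularizing $\epsilon$-term $\epsilon\int_0^1(\partial_y w^k\cdot\partial_y\psi + w^k\cdot\psi)$, which controls the full $H^1$ norm of $w^k$ independently of the (merely bounded, possibly degenerate) contribution of $B(w^k)=M(v(w^k))$.

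First I would set up the fixed point map. For a fixed $\overline{w}\in L^2((0,1);\R^n)$ and a parameter $\sigma\in[0,1]$, consider the linear problem of finding $w\in H^1((0,1);\R^n)$ such that
\[
\epsilon\int_0^1(\partial_y w\cdot\partial_y\psi + w\cdot\psi) + \frac{1}{e_k^2}\int_0^1 \partial_y\psi\cdot(B(\overline{w})\partial_y w) = \sigma\,\ell(\psi;\overline{w})
\]
for all $\psi$, where $\ell$ collects the affine terms $-\frac{1}{\tau}\int_0^1(v(\overline{w})-v(w^{k-1}))\cdot\psi - \frac{e'_k}{e_k}\int_0^1(v(\overline{w})\cdot\psi + y\,v(\overline{w})\cdot\partial_y\psi) + \frac{1}{e_k}\varphi_k\cdot\psi(1)$. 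The form on the left is bounded and coercive (the $\epsilon$-part dominates; the $B$-part is positive semidefinite since $M=A(D^2h)^{-1}$ is symmetric positive semidefinite under (H2)), so Lax--Milgram yields a unique $w$; since $v=(Dh)^{-1}$ is bounded (values in $\overline{\cD}\subset(0,1)^n$) and continuous, the map $\overline{w}\mapsto w$ is continuous and compact from $L^2$ into $L^2$ via the compact embedding $H^1\hookrightarrow L^2$ in one dimension. I would then apply the Leray--Schauder fixed point theorem: a uniform a~priori bound on any fixed point $w_\sigma$ (obtained by testing with $\psi=w_\sigma$ and using coercivity of the $\epsilon$-term together with boundedness of $v$) gives a ball invariant under the homotopy, and the fixed point at $\sigma=1$ is the desired solution of~\eqref{eq:approximate2}.

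To obtain the entropy inequality~\eqref{eq:entropy}, the decisive choice is to test~\eqref{eq:approximate2} with $\psi = w^k = Dh(v(w^k)) = D\cE$-type variable. The first term becomes $\frac{1}{\tau}\int_0^1(v(w^k)-v(w^{k-1}))\cdot Dh(v(w^k))$, and convexity of $h$ gives the discrete chain-rule bound $(v(w^k)-v(w^{k-1}))\cdot Dh(v(w^k))\ge h(v(w^k))-h(v(w^{k-1}))$, producing the two entropy terms at rate $1/\tau$. The diffusion term is exactly $\frac{1}{e_k^2}\int_0^1\partial_y w^k\cdot(B(w^k)\partial_y w^k)\ge 0$, and the $\epsilon$-term reproduces $\epsilon\int_0^1(|\partial_y w^k|^2+|w^k|^2)$. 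The boundary and drift terms combine into $\frac{e'_k}{e_k}\big(h(f_k)-\int_0^1 h(v(w^k))\big)$ by repeating, at the discrete level, the formal computation performed after~\eqref{eq:rescaled}: the flux term $\frac{1}{e_k}\varphi_k\cdot w^k(1)=\frac{e'_k}{e_k}f_k\cdot Dh(v(w^k))(1)$ together with the drift integral $\frac{e'_k}{e_k}\int_0^1 y\,\partial_y v(w^k)\cdot Dh(v(w^k)) = \frac{e'_k}{e_k}\int_0^1 y\,\partial_y(h(v(w^k)))$ is integrated by parts and the convexity estimate $h(f_k)\ge h(v(w^k)(1)) + Dh(v(w^k)(1))\cdot(f_k - v(w^k)(1))$ is invoked to bound the boundary contribution.

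The main obstacle, as in~\cite{Jungel}, is the first step: making the fixed point argument rigorous requires care because $B(w^k)=M(v(w^k))$ is only positive \emph{semi}definite and degenerate (it vanishes as components of $v$ approach the boundary of $\overline{\cD}$), so all coercivity that controls $w^k$ in $H^1$ must come from the artificial $\epsilon$-term, and one must check that the a~priori bound used in Leray--Schauder is genuinely $\sigma$-uniform despite the affine source $\ell$ depending on the unknown through $v(\overline{w})$ (which is harmless since $v$ takes values in the bounded set $\overline{\cD}$). A secondary technical point is the passage from the variable $w$ to $v(w)=(Dh)^{-1}(w)$: one must verify that $w^k\in H^1$ indeed forces $v(w^k)\in\overline{\cD}$ almost everywhere, which is immediate from property~(P3) since $(Dh)^{-1}$ maps $\R^n$ into $\cD$, thereby automatically enforcing the volumic constraint~\eqref{eq:constraint} at the discrete level without any maximum principle.
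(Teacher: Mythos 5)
Your proposal is correct and follows essentially the same route as the paper's proof: an $\epsilon$-regularized linearized problem solved by Lax--Milgram, a Leray--Schauder homotopy in which the parameter multiplies the affine source terms, and the entropy inequality obtained by testing with $\psi = w^k$, combining the convexity bound $(v(w^k)-v(w^{k-1}))\cdot Dh(v(w^k))\geq h(v(w^k))-h(v(w^{k-1}))$ with integration by parts of the drift term and the identity $\varphi_k = e'_k f_k$. The only immaterial differences are that you run the fixed point in $L^2$ rather than $L^\infty$ (both work via compact Sobolev embedding in one dimension) and that you obtain the uniform bound on fixed points from the boundedness of $v$, which takes values in $\overline{\cD}$, whereas the paper extracts it from the entropy estimate itself.
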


The proof of this lemma is postponed until Section~\ref{proof:lemma2}. Let us point out the following fact: from (\ref{eq:entropy}), we obtain 
\begin{align}\label{eq:entropyfin}
 &  \left( \frac{1}{\tau} + \frac{e'_k}{e_k} \right) \int_0^1 h(v(w^k)) + \epsilon \int_0^1 (|\partial_y w^k|^2 + |w^k|^2) + \frac{1}{e_k^2}\int_0^1 \partial_y w^k \cdot B(w^k) \partial_y w^k\\ \nonumber
& \leq \frac{1}{\tau} \int_0^1 h(v(w^{k-1})) + \frac{e'_k}{e_k} \|h\|_{L^\infty(\overline{\cD})},  \\ \nonumber
\end{align}
which implies
\begin{align}\label{eq:entropyfin2}
 & \frac{1}{\tau} \int_0^1 h(v(w^k)) + \epsilon \int_0^1 (|\partial_y w^k|^2 + |w^k|^2) + \frac{1}{e_k^2}\int_0^1 \partial_y w^k \cdot B(w^k) \partial_y w^k\\ \nonumber
& \leq \frac{1}{\tau} \int_0^1 h(v(w^{k-1})) + 2 \frac{e'_k}{e_k} \|h\|_{L^\infty(\overline{\cD})}.  \\ \nonumber
\end{align}

\subsubsection{Step 2: Uniform bounds}

For all $0\leq i \leq n$, let $(\phi_{i,p})_{p\in \N}$ be a sequence of  {non-negative} functions of $\cC^0([0,T])$ which weakly-* converges to $\phi_i$ in $L^{\infty}(0,T)$ as $p$ goes to infinity, 
and for all $p\in \N$, 
$$
\|\phi_{i,p}\|_{L^\infty(0,T)} \leq \|\phi_i\|_{L^\infty(0,T)}.
$$
Let us define
$$
\varphi_p:= (\phi_{1,p}, \cdots, \phi_{n,p})^T, \quad \mbox{ and } e_p(t):= e_0 + \int_0^t \sum_{i=0}^n \phi_{i,p}(s)\,ds.
$$
 {It holds that $(e_p)_{p\in\N^*}$ strongly converges to $e$ in $L^\infty(0,T)$. Indeed, let $\varepsilon >0$. Since $e$ is continuous on $[0,T]$, it is uniformly continuous, and there exists $\eta>0$ so that for all 
$t,t'\in [0,T]$ satisfying $|t-t'|\leq \eta$, then $|e(t) - e(t')| \leq \varepsilon/2$. Let $M\in\N^*$ and $0 = s_0 < s_1 < \cdots < s_M = T$ so that for all $0\leq j \leq M-1$, $|s_j - s_{j+1}|  \leq \eta$. Then, it holds that 
$$
\mathop{\max}_{0\leq j \leq M} |e_p(s_j) - e(s_j)| \mathop{\longrightarrow}_{p\to +\infty } 0,
$$
because of the weak-* convergence in $L^\infty[0,T]$ of $(\phi_{i,p})_{p\in\N^*}$ to $\phi_i$ for all $0\leq i \leq n$. }

 {Thus, there exists $p_0\in \N^*$ large enough such that for all $p\geq p_0$, $\dps \mathop{\max}_{0\leq j \leq M} |e_p(s_j) - e(s_j)| \leq \varepsilon/2$. Besides, the non-negativity of the functions 
$\phi_i$ and $\phi_{i,p}$ implies that $e$ and $e_p$ are non-decreasing functions, so that for all $0\leq j \leq M-1$ and all $p\in\N^*$, 
$$
\forall s\in [s_j, s_{j+1}], \quad \quad e(s_j) \leq e(s) \leq e(s_{j+1}) \quad \mbox{ and } \quad e_p(s_j) \leq e_p(s) \leq e_p(s_{j+1}). 
$$
As a consequence, for all $p\geq p_0$, all $0\leq j \leq M-1$ and all $s\in [s_j,s_{j+1}]$,  
\begin{align*}
|e(s) - e_p(s)| & \leq \max\left( |e(s_{j+1})-e_p(s_j)|, |e_p(s_{j+1}) - e(s_j)| \right)\\
& \leq \max\left( |e(s_{j+1}) - e(s_j)| + |e(s_j) - e_p(s_j)|, |e_p(s_{j+1}) - e(s_{j+1})| + |e(s_{j+1}) - e(s_j)|\right) \leq \varepsilon.\\
\end{align*}
Hence, for all $p\geq p_0$, $\|e - e_p\|_{L^\infty(0,T)}\leq \varepsilon$, which yields the strong convergence of the sequence $(e_p)_{p\in\N^*}$ to $e$ in $L^\infty(0,T)$.}

\medskip

For all $k\in\N^*$  {such that $k\tau \leq T$}, we denote by $w^{k, p}$ a solution to (\ref{eq:approximate2}) associated to the fluxes $(\phi_{i,p})_{0\leq i \leq n}$. 
The time-discretized associated quantities are denoted (using obvious notation) by $\varphi_{k,p}$, $e_{k,p}$ and $e'_{k,p}$.

\medskip

Let us define the piecewise constant in time functions $w^{(\epsilon, \tau,p)}(y,t)$, 
$v^{(\epsilon, \tau,p)}(y,t)$, $\sigma_\tau v^{(\epsilon, \tau,p)}(y,t)$, $e_{(\tau,p)}(t)$ and $e^d_{(\tau,p)}(t)$ 
as follows: for all $k\geq 1$  {such that $k\tau \leq T$}, $(k-1)\tau < t \leq k \tau$ and almost all $y\in(0,1)$,  
\begin{align*}
& w^{(\epsilon, \tau,p)}(y,t) := w^{k,p}(y),  \quad v^{(\epsilon, \tau,p)}(y,t) := Dh(w^{k,p}(y)),  \quad \sigma_\tau v^{(\epsilon, \tau,p)}(y,t) = Dh(w^{k-1,p}(y)),\\
& e_{(\tau,p)}(t)  = e_{k,p}, \quad e^d_{(\tau,p)}(t) := e'_{k,p},  \quad \varphi_{(\tau,p)} := \varphi_{k,p}.\\
\end{align*}
Besides, let us set $w^{(\epsilon, \tau,p)}(0, \cdot) = Dh(v^0)$ and $v^{(\epsilon, \tau,p)}(0, \cdot) = v^0$. Let us also denote by $(v_1^{(\epsilon, \tau,p)}, \cdots, v_n^{(\epsilon, \tau,p)})$ 
the $n$ components of $v^{(\epsilon, \tau,p)}$.

\medskip

Then, the following system holds for all piecewise constant in time functions $\psi: (0,T) \to H^1((0,1);\R^n)$, 
\begin{align}\label{eq:approximatetau}
& \frac{1}{\tau}\int_0^T \int_0^1 \left( v^{(\epsilon, \tau,p)} - \sigma_\tau v^{(\epsilon, \tau,p)}) \right) \cdot \psi \,dy\,dt + \int_0^T \frac{1}{e_{(\tau,p)}^2}\int_0^1 \partial_y \psi \cdot (B(w^{(\epsilon, \tau,p)})
\partial_y w^{(\epsilon, \tau,p)}) \, dy\,dt \\ \nonumber
& + \epsilon \int_0^T \int_0^1 (\partial_y w^{(\epsilon, \tau,p)} \cdot \partial_y \psi + w^{(\epsilon, \tau,p)}\cdot \psi)\,dy \,dt  + \int_0^T \frac{e^d_{(\tau,p)}}{e_{(\tau,p)}} \int_0^1 v(w^{(\epsilon, \tau,p)}) \cdot \psi + 
y v(w^{(\epsilon, \tau,p)}) \cdot \partial_y \psi)\,dy\,dt \\ \nonumber
& = \int_0^T \frac{1}{e_{(\tau,p)}}\varphi_{(\tau,p)} \cdot \psi(1) \,dt.\\ \nonumber
\end{align}
The set of piecewise constant functions in time $\psi : (0,T) \to H^1((0,1);\R^n)$ is dense in $L^2((0,T); H^1((0,1); \R^n))$, so that (\ref{eq:approximatetau}) also holds for any 
$\psi \in L^2((0,T); H^1((0,1);\R^n))$. 

\medskip

Using the fact that $A$ satisfies assumption~(H2) of Theorem~\ref{th:Jungel} and the fact that $\partial_y w^{k,p} = D^2h(v^{k,p})\partial_y v^{k,p}$, we obtain for all $k\in\N^*$  {such that $k\tau \leq T$},
\begin{align*}
\int_0^1 \partial_y w^{k,p} \cdot (B(w^{k,p}) \partial_y w^{k,p}) & = \int_0^1 \partial_y v(w^{k,p})\cdot \left[ D^2h(v(w^{k,p})) A(v(w^{k,p})) \partial_y v(w^{k,p}) \right]\,dy\\
& \geq \sum_{i=1}^n \int_0^1  {\alpha \left| v_i(w^{k,p})\right|^{2m_i-2}}|\partial_y v_i(w^{k,p})|^2\,dy = \sum_{i=1}^n \int_0^1 { |\partial_y G_i(v_i(w^{k,p}))|^2}\, dy\\
& = \int_0^1  {|\partial_y G(v(w^{k,p}))|^2}\, dy,\\
\end{align*}
where  {$G_i(s):= \frac{\sqrt{\alpha}}{m_i} |s|^{m_i}$ for all $s\in (0,1)$ and $G(z) =\left(G_i(z_i)\right)_{1\leq i \leq n}$ for all $z:=(z_i)_{1\leq i \leq n} \in (0,1)^n$.} It follows from (\ref{eq:entropyfin2}) that 
for all $k\in\N^*$  { such that $k\tau \leq T$}, 
\begin{align*}
 & \int_0^1 h(v(w^{k,p})) + \tau \int_0^1 |\partial_y \widetilde{\alpha}(v(w^{k,p}))|^2 \\
 & + \epsilon \tau \int_0^1 \left(|\partial_y w^{k,p}|^2 +  |w^{k,p}|^2\right) \leq 2 \tau \|h\|_{L^\infty(\overline{\cD})} \frac{e'_{k,p}}{e_{k,p}} + \int_0^1 h(v(w^{k-1,p})).\\ 
\end{align*}

Summing these inequalities yields, for $k\in\N^*$ so that $k\tau \leq T$,  
\begin{align}\label{eq:ineq00}
 & \int_0^1 h(v(w^{k,p}))+ \tau \sum_{j=1}^k \int_0^1  {|\partial_y G(v(w^{j,p}))|^2}  + \epsilon \tau \sum_{j=1}^k \int_0^1 (|\partial_y w^{j,p}|^2 +  |w^{j,p}|^2) \\ \nonumber
 &\leq 2 \tau \|h\|_{L^\infty(\overline{\cD})} \sum_{j=1}^k \frac{e'_{j,p}}{e_{j,p}} + \int_0^1 h(v^0),\\ \nonumber
 & \leq 2 \|h\|_{L^\infty(\overline{\cD})}\frac{1}{e_0} \sum_{j=1}^k \tau e'_{j,p} + \int_0^1 h(v^0),\\ \nonumber
 & \leq  2 \|h\|_{L^\infty(\overline{\cD})}\frac{(n+1)\|\Phi\|_{L^\infty(0,T)}}{e_0} T + \int_0^1 h(v^0).\\ \nonumber
\end{align}

\medskip

In the sequel, $C$ will denote an arbitrary constant, which may change along the calculations, 
but remains independent on $\epsilon$, $\tau$, $p$ and $\Phi$. We are deliberately keeping here the explicit dependence of the constants on $\|\Phi\|_{L^\infty(0,T)}$ in view of the proof of 
Proposition~\ref{prop:optimal}. It then holds that 
$$
\|e^d_{(\tau, p)}\|_{L^\infty(0,T)} \leq C\|\Phi\|_{L^\infty(0,T)} \mbox{ and } 0< e_0 \leq \|e_{(\tau, p)}\|_{L^\infty(0,T)} \leq C \|\Phi\|_{L^\infty(0,T)}.
$$
We also obtain from (\ref{eq:ineq00})  {and the fact that $\|G_i\|_{L^\infty(0,1)} \leq \frac{\sqrt{\alpha}}{m_i}$ for all $1\leq i \leq n$} that 
\begin{equation}\label{eq:estimalpha}
 {\| G(v^{(\epsilon, \tau, p)}) \|_{L^2((0,T); H^1(0,1)^n)}} \leq C\left( 1 + \|\Phi\|_{L^\infty(0,T)}\right)
\end{equation}
and
\begin{equation}\label{eq:estimw}
\sqrt{\epsilon} \|w^{(\epsilon, \tau, p)}\|_{L^2((0,T); H^1(0,1)^n)} \leq C\left( 1 +  \|\Phi\|_{L^\infty(0,T)}\right).
\end{equation}
Since for all $1\leq i \leq n$, $m_i\leq 1$, this implies that
\begin{align}\label{eq:H1bound}
\|\partial_y v_i^{(\epsilon, \tau, p)} \|_{L^2((0,T); L^2(0,1))} & =   {\left\|\frac{\left|v_i^{(\epsilon, \tau, p)}\right|^{1-m_i}}{m_i} \partial_y \left(|v_i^{(\epsilon, \tau, p)}|^{m_i}\right)\right\|_{L^2((0,T); L^2(0,1))}}\\ \nonumber 
& =  {\left \|\frac{\left|v_i^{(\epsilon, \tau, p)}\right|^{1-m_i}}{\sqrt{\alpha}} \partial_y G_i(v_i^{(\epsilon, \tau, p)})\right\|_{L^2((0,T); L^2(0,1))}}\\ \nonumber 
& \leq C \| \partial_y  {G_i}(v_i^{(\epsilon, \tau, p)})\|_{L^2((0,T); L^2(0,1))} \leq C\left( 1 + \|\Phi\|_{L^\infty(0,T)}\right).  \\  \nonumber 
\end{align}
Besides, 
\begin{align}\label{eq:Abound}
 \left\| A(v^{(\epsilon, \tau, p)} \partial_y v^{(\epsilon, \tau, p)}\right\|_{L^2((0,T); L^2(0,1)^n)}^2 & \leq   \left\| A(v^{(\epsilon, \tau, p)} )\right\|^2_{L^\infty((0,T); L^\infty(0,1)^{n\times n})} \left\|\partial_y v^{(\epsilon, \tau, p)}\right\|_{L^2((0,T); L^2(0,1)^n)}^2 \\ \nonumber
 & \leq C\left( 1 + \|\Phi\|_{L^\infty(0,T)}\right), \\ \nonumber
\end{align}
using the fact that $A\in \cC^0(\overline{\cD}; \R^{n\times n})$. 

\medskip

This yields that for all $\psi \in L^2((0,T); H^1((0,1); \R^n))$, 
\begin{align*}
 \frac{1}{\tau}\left| \int_{\tau}^T \int_0^1 (v^{(\epsilon, \tau, p)} - \sigma_{\tau}v^{(\epsilon, \tau, p)})\cdot \psi \,dy\,dt\right| & 
 \leq \frac{1}{e_0^2}\| A(v^{(\epsilon, \tau, p)} \partial_y v^{(\epsilon, \tau, p)}\|_{L^2((0,T); L^2(0,1)^n)}\|\partial_y \psi\|_{L^2((0,T); L^2(0,1)^n)}\\
 & + \epsilon \|w^{(\epsilon, \tau, p)}\|_{L^2((0,T); H^1(0,1)^n)} \|\psi\|_{L^2((0,T); H^1(0,1)^n)} \\
 & + 2 \frac{\|e^d_{(\tau, p)}\|_{L^\infty(0,T)}}{e_0} \|v^{(\epsilon, \tau, p)}\|_{L^2((0,T); H^1(0,1)^n)}\|\psi\|_{L^2((0,T); H^1(0,1)^n)}\\
 & + \frac{1}{e_0} \|\Phi\|_{L^\infty(0,T)} \|\psi\|_{L^2((0,T); H^1(0,1)^n)},\\
 & \leq C\left( 1  + \left\| \Phi\right\|_{L^\infty(0,T)}\right)\|\psi\|_{L^2((0,T); H^1(0,1)^n)}.\\
\end{align*}
This last inequality shows that 
\begin{equation}\label{eq:timeder}
\frac{1}{\tau}\|v^{(\epsilon,\tau,p)} - \sigma_{\tau} v^{(\epsilon, \tau,p)} \|_{L^2((\tau,T); \left(H^1(0,1)^n\right)')} \leq C\left( 1  + \left\| \Phi\right\|_{L^\infty(0,T)}\right).
\end{equation}

\subsubsection{Step 3: The limit $p\to +\infty$ and $\epsilon, \tau \to 0$}

For all $p\in\N^*$, the functions $e'_p$ and $e_p$ are continuous on $[0,T]$, and hence are uniformly continuous. As a consequence, there exists $\tau_p >0$ small enough so that for any $t,t'\in [0,T]$ 
satisfying $|t-t'| \leq \tau_p$, then $|e'_p(t) - e'_p(t')| \leq \frac{1}{p}$ and $|e_p(t) - e_p(t')| \leq \frac{1}{p}$. This implies in particular that 
$$
\|e^d_{(\tau_p, p)} - e'_p\|_{L^\infty(0,T)} \leq \frac{1}{p} \quad \mbox{ and } \|e_{(\tau_p, p)} - e_p\|_{L^\infty(0,T)} \leq \frac{1}{p}.
$$
 {These inequalities, together with the fact that $(e'_p)_{p\in\N^*}$ weakly-* converges to $e'$ in $L^\infty(0,T)$ (respectively that $(e_p)_{p\in\N^*}$ strongly converges to $e$ in $L^\infty(0,T)$), 
imply that the sequence $\left(e^d_{(\tau_p,p)}\right)_{p\in\N^*}$ (respectively $\left(e_{(\tau_p, p)}\right)_{p\in\N^*}$) also weakly-* converges to $e'$ in $L^\infty(0,T)$ (respectively strongly converges to $e$ in $L^\infty(0,T)$).}

\medskip

In the following, we consider such a subsequence $(\tau_p)_{p\in\N^*}$. The uniform estimates (\ref{eq:timeder}) and (\ref{eq:H1bound}) 
allow us to apply the Aubin lemma in the version of Theorem~1 of~\cite{AubinJungel}. Up to extracting a subsequence which is not relabeled, 
there exists $v = (v_i)_{1\leq i \leq n} \in H^1((0,T); (H^1((0,1); \R^n))') \cap L^2((0,T); H^1((0,1); \R^n))$ so that as $p$ goes to infinity and $\epsilon$ goes to $0$,  
\begin{align*}
v^{(\epsilon, \tau_p, p)} \mathop{\longrightarrow}_{p\to +\infty, \epsilon \to 0} v, & \quad \left\{ \begin{array}{l}
                                                                                             \mbox{ strongly in }L^2((0,T); L^2((0,1);\R^n)),\\
                                                                                             \mbox{ weakly in }L^2((0,T);H^1((0,1);\R^n)),\\
                                                                                             \mbox{ and a.e. in } (0,T)\times (0,1),\\
                                                                                            \end{array} \right.\\
\frac{1}{\tau_p}\left( v^{(\epsilon, \tau_p, p)} - \sigma_{\tau_p} v^{(\epsilon, \tau_p, p)}\right) \mathop{\rightharpoonup}_{p\to +\infty, \epsilon \to 0} \partial_t v & \mbox{ weakly in }L^2((0,T); (H^1((0,1);\R^n))').\\
\end{align*}
Because of the boundedness of $v^{(\epsilon, \tau_p, p)}$ in $L^\infty((0,T); L^\infty((0,1);\R^n))$, the convergence even holds strongly in $L^q((0,T); L^q((0,1);\R^n))$ for any $q<+\infty$, which is a 
consequence of the dominated convergence theorem. The latter theorem, together with $A\in \cC^0(\overline{\cD}; \R^{n\times n})$ 
implies also that the convergence $A(v^{(\epsilon, \tau_p, p)}) \mathop{\longrightarrow} A(v)$ holds strongly in $L^q((0,T); L^q((0,1);\R^{n\times n}))$. 
Moreover, using (\ref{eq:Abound}) and (\ref{eq:estimw}), up to extracting another subsequence, there exists $V\in L^2((0,T); L^2((0,1);\R^n))$ so that 
\begin{align*}
 A(v^{(\epsilon, \tau_p, p)}) \partial_y v^{(\epsilon, \tau_p, p)} \rightharpoonup V & \mbox{ weakly in }L^2((0,T); L^2((0,1);\R^n)),\\
\epsilon w^{(\epsilon, \tau_p, p)} \longrightarrow 0 & \mbox{ strongly in }L^2((0,T); H^1((0,1);\R^n)).\\
\end{align*}
The strong convergence of $A(v^{(\epsilon, \tau_p,p)})$ in $L^q((0,T); L^q((0,1);\R^n))$ and the weak convergence of $\partial_y v^{(\epsilon, \tau_p,p)}$ in $L^2((0,T); L^2((0,1);\R^n))$ 
implies necessarily that $V = A(v)\partial_y v$.

\medskip

We are now in position to pass to the limit $\epsilon \to 0$ and $p\to +\infty$ in (\ref{eq:approximatetau}) with $\tau = \tau_p$ and $\psi \in L^2((0,T); H^1((0,1);\R^n))$. Let us recall that 
$\left(e_{(\tau_p,p)}\right)_{p\in \N^*}$ (respectively $\left( e^d_{(\tau_p, p)}\right)_{p\in\N^*}$) converges strongly (respectively weakly-*) to $e$ (respectively $e'$) in $L^\infty(0,T)$. We obtain that $v$ is a solution to
\begin{align}\label{eq:variat}
& \int_0^T \int_0^1 \partial_t v \cdot \psi \,dy\,dt + \int_0^T \frac{1}{e(t)^2}\int_0^1 \partial_y \psi \cdot ( A(v) \partial_y v) \, dy\,dt \\ \nonumber
& + \int_0^T \frac{e'(t)}{e(t)} \int_0^1 (v \cdot \psi + y v \cdot \partial_y \psi)\,dy\,dt  = \int_0^T \frac{1}{e(t)}\varphi \cdot \psi(1) \,dt,\\ \nonumber
\end{align}
yielding the result.

\subsubsection{Proof of Lemma~\ref{lem:intermediate}}\label{proof:lemma2}

\begin{proof}[Proof of Lemma~\ref{lem:intermediate}]
We prove Lemma~\ref{lem:intermediate} by induction using the Leray-Schauder fixed-point theorem. Let $z \in L^\infty ((0,1); \R^n)$ and $\delta \in [0,1]$. 
We consider the following linear problem: find $w\in H^1((0,1); \R^n)$ solution of 
\begin{equation}\label{eq:linear}
\forall \psi \in H^1((0,1); \R^n), \quad a_z(w, \psi) = l_{\delta, z}(\psi), 
\end{equation}
where 
$$
a_z(w, \psi):= \frac{1}{e_k^2}\int_0^1 \partial_y \psi \cdot B(z) \partial_y w + \epsilon \int_0^1 (\partial_y w \cdot \partial_y \psi + w \cdot \psi)
$$
and 
$$
l_{\delta, z}(\psi):= - \frac{\delta}{\tau}\int_0^1 (v(z) - v(w^{k-1}))\cdot \psi + \frac{\delta}{e_k} \varphi_k \cdot \psi(1) - \delta \frac{e'_k}{e_k}\int_0^1 (v(z) \cdot \psi + y v(z) \cdot \partial_y \psi).
$$
As a consequence of (H2), the matrix $B(z)$ is positive semi-definite for any $z \in \R^n$. Thus, the bilinear form $a_z$ is coercive and continuous on $H^1((0,1); \R^n)$, and it holds that
\begin{equation}\label{eq:coercivity}
\forall \psi \in H^1((0,1); \R^n), \; a_z(\psi, \psi) \geq \epsilon  { \|\psi\|^2_{H^1(0,1)}}.
\end{equation}

Since $v(z) \in L^\infty((0,1); \R^n)$ and $\|v(z)\|_{L^\infty(0,1)} \leq 1$, the linear form $l_{\delta,z}$ is continuous. From the Agmon inequality, there exists $C>0$ independent of 
$\Phi:= (\phi_0, \cdots, \phi_n)$, $\epsilon$ or $\tau$ such that
for all $\psi \in H^1((0,1); \R^n)$, 
\begin{equation}\label{eq:continuity1}
|l_{\delta, z}(\psi) | \leq \left( \frac{2}{\tau} + C \left\| \Phi\right\|_{L^\infty(0,T)}\right) \|\psi\|_{H^1(0,1)}, 
\end{equation}
where $\dps \|\Phi\|_{L^\infty(0,T)} = \mathop{\max}_{i=0, \cdots, n} \|\phi_i\|_{L^\infty(0,T)}$. It immediately follows 
from the Lax-Milgram theorem that there exists a unique solution $w \in H^1((0,1); \R^n)$ to 
(\ref{eq:linear}). 

\medskip

We define the operator $S: [0,1] \times L^\infty((0,1); \R^n)  \to L^\infty((0,1); \R^n)$ as follows. For all $\delta \in [0,1]$ and $\chi \in L^\infty((0,1); \R^n)$, 
$S(\delta, \chi)$ is the unique solution $w\in H^1((0,1); \R^n) \hookrightarrow L^\infty((0,1); \R^n)$ 
of (\ref{eq:linear}). We are going to prove that there exists a fixed-point $w^k \in H^1((0,1); \R^n)$ of the equation $S(1,w^k) = w^k$ using the Leray-Schauder fixed-point theorem (Theorem~\ref{th:LS} in the Appendix). 
This will end the proof of Lemma~\ref{lem:intermediate} since such a fixed-point $w^k$ is a solution of (\ref{eq:approximate2}). 

Let us check that all the assumptions of Theorem~\ref{th:LS} are satisfied: 
\begin{itemize}
 \item[(A1)] For all $\chi \in L^\infty((0,1); \R^n)$, $S(0,\chi) = 0$; 
 \item[(A2)] Let us prove that $S$ is a compact map. To this aim, let us first prove that it is continuous.
 Let $(\delta_n)_{n\in\N}$ and $(\chi_n)_{n\in\N}$ be sequences in $[0,1]$ and $L^\infty((0,1); \R^n)$ respectively, $\delta \in [0,1]$ and $\chi\in L^\infty((0,1); \R^n)$ 
 so that $\dps \delta_n \mathop{\longrightarrow}_{n\to +\infty} \delta$ and $\dps \chi_n \mathop{\longrightarrow}_{n\to +\infty} \chi$ strongly in $L^\infty((0,1); \R^n)$. For all $n\in \N$, 
 let  $w_n:= S(\delta_n, \chi_n)$. From assumption (H1) and the global inversion theorem, $h:\cD \to \R^n$ is a $\cC^2$-diffeomorphism. Thus, together with the fact that $A\in \cC^0(\overline{\cD}; \R^{n\times n})$, 
 it holds that the applications $z\in \R^n \mapsto v(z) = (Dh)^{-1}(z)$ and $z\in \R^n \mapsto B(z) = A(v(z)) D^2h((Dh)^{-1}(z)) = A(v(z) D\left(Dh^{-1}\right)(z)$ are continuous. Hence, $\dps v(\chi_n) \mathop{\longrightarrow}_{n\to +\infty} v(\chi)$ and $\dps B(\chi_n) \mathop{\longrightarrow}_{n\to +\infty} B(\chi)$ strongly 
 in $L^\infty((0,1); \R^n)$ and $L^\infty((0,1); \R^{n\times n})$ respectively. 
 
 Besides, the uniform coercivity and continuity estimates (\ref{eq:coercivity}) and (\ref{eq:continuity1}) imply that 
 $(w_n)_{n\in\N}$ is a bounded sequence in $H^1((0,1); \R^n)$. Thus, up to the extraction of a subsequence which is not relabeled, $(w_n)_{n\in\N}$ weakly converges to some $w$ in $H^1((0,1); \R^n)$. Passing to the 
 limit $n\to +\infty$ in (\ref{eq:linear}) implies that $w = S(\delta, \chi)$. The uniqueness of the limit yields that the whole sequence $(w_n)_{n\in\N}$ weakly converges to $S(\delta, \chi)$ in $H^1((0,1); \R^n)$. 
 The convergence thus holds strongly in $L^\infty((0,1); \R^n)$ because of the compact embedding  {$H^1((0,1); \R^n) \hookrightarrow L^\infty((0,1); \R^n)$}.
 This proves the continuity of the map $S$ and its compactness follows again from the compact embedding  {$ H^1((0,1); \R^n)\hookrightarrow L^\infty((0,1); \R^n)$}.  
 \item[(A3)] Let $\delta \in [0,1]$ and $w\in L^\infty((0,1); \R^n)$ so that $S(\delta , w) = w$. It holds that (taking $\psi = w$ as a test function in (\ref{eq:linear}) with $\chi = w$), 
 \begin{align}
& \frac{1}{e_k^2}\int_0^1 \partial_y w \cdot (B(w) \partial_y w ) + \epsilon \int_0^1 (|\partial_y w|^2 + |w|^2 ) = \\ \label{eq:egalite}
& - \frac{\delta}{\tau}\int_0^1 (v(w) - v(w^{k-1}))\cdot w + \frac{\delta}{e_k} \varphi_k \cdot w(1) - \delta \frac{e'_k}{e_k}\int_0^1 (v(w) \cdot w + y v(w) \cdot \partial_y w).\\ \nonumber
\end{align}
Let us consider separately the different terms appearing in (\ref{eq:egalite}). First, by convexity of $h$, and using the fact that $w = Dh(v(w))$, it holds that
\begin{equation}\label{eq:ineq1}
\frac{\delta}{\tau}\int_0^1 (v(w) - v(w^{k-1}))\cdot w = \frac{\delta}{\tau}\int_0^1 (v(w) - v(w^{k-1}))\cdot Dh(v(w)) \geq \frac{\delta}{\tau} \int_0^1 ( h(v(w)) - h(v(w^{k-1})) ).
\end{equation}
Besides, using an integration by parts, 
\begin{align}\nonumber
& \delta \frac{e'_k}{e_k}\int_0^1 (v(w) \cdot w + y v(w) \cdot \partial_y w) = \delta \frac{e'_k}{e_k} \left( v(w)(1) \cdot w(1) - \int_0^1 y w \cdot \partial_y v(w) \right),\\ \nonumber
&= \delta \frac{e'_k}{e_k} \left( v(w)(1)\cdot Dh(v(w)(1)) - \int_0^1 y Dh(v(w)) \cdot \partial_y v(w) \right),\\ \nonumber
&= \delta \frac{e'_k}{e_k} \left( v(w)(1)\cdot Dh(v(w)(1))- \int_0^1 y \partial_y (h(v(w)))\right),\\ \label{eq:eq1}
&= \delta \frac{e'_k}{e_k} \left( v(w)(1)\cdot Dh(v(w)(1)) - h(v(w)(1)) + \int_0^1 h(v(w)) \right). \\  \nonumber
\end{align}
Using (\ref{eq:deffk}), we obtain 
\begin{equation}\label{eq:eq2}
\frac{\delta}{e_k} \varphi_k \cdot w(1)  = \delta \frac{e_k'}{e_k} f_k \cdot Dh(v(w)(1)).
\end{equation}
Finally, using (\ref{eq:egalite}), (\ref{eq:ineq1}), (\ref{eq:eq1}) and (\ref{eq:eq2}), and again the convexity of $h$, we obtain
\begin{align}\label{eq:ineqentrop}
&  \frac{\delta}{\tau} \int_0^1 h(v(w)) + \epsilon \int_0^1 (|\partial_y w|^2 + |w|^2) + \frac{1}{e_k^2}\int_0^1 \partial_y w \cdot (B(w) \partial_y w)\\ \nonumber
& \leq \frac{\delta}{\tau} \int_0^1 h(v(w^{k-1})) + \delta \frac{e'_k}{e_k} \left( (f_k - v(w)(1)) \cdot Dh(v(w)(1)) + h(v(w)(1)) - \int_0^1 h(v(w)) \right)  \\ \nonumber
& = \frac{\delta}{\tau} \int_0^1 h(v(w^{k-1})) +  \frac{e'_k}{e_k} \left( h(f_k) - \int_0^1 h(v(w)) \right). \\ \nonumber
\end{align}
This inequality implies that
$$
\epsilon \|w\|_{H^1((0,1); \R^n)}^2 \leq \left( \frac{2}{\tau} +C\|\Phi\|_{L^\infty(0,T)}\right) \|h\|_{L^\infty(\overline{D})},
$$
\end{itemize}
for some constant $C>0$ independent of $\epsilon$, $\tau$ of $\Phi$.

All the assumptions of the Leray-Schauder fixed-point theorem are thus satisfied. This yields the existence of a fixed-point solution $w^k\in H^1((0,1); \R^n)$ to $S(1,w^k) = w^k$. Besides, using 
(\ref{eq:ineqentrop}) with 
$\delta = 1$, we have the discrete entropy inequality (\ref{eq:entropy}). 
\end{proof}

\subsection{Proof of Proposition~\ref{prop:longtime}}\label{sec:proof_time}

Let us define by $V:= \sum_{i=0}^n \overline{\phi}_i \in \R_+^*$, $\overline{\varphi}:=( \overline{\phi}_1, \cdots , \overline{\phi}_n)^T$ and $\overline{f}:= \frac{\overline{\varphi}}{V}$. 
From (T1), the vector $\overline{f} := \left( \overline{f}_i\right)_{1\leq i \leq n}$ obviously belongs to the set $\cD$. 

\medskip

If $h$ defined by (\ref{eq:defh}) is an entropy density for which $A$ satisfies assumptions (H1)-(H2)-(H3), then $A$ satisfies the same assumptions with the entropy density 
$$
\overline{h}: \left\{
\begin{array}{ccc}
\cD & \to & \R \\
u & \mapsto & \sum_{i=1}^n u_i \log \frac{u_i}{\overline{f}_i} + (1-\rho_u) \log \frac{1-\rho_u}{1 - \rho_{\overline{f}}}.\\
\end{array}
\right .
$$
Indeed, for all $u\in \cD$, $D\overline{h}(u) = Dh(u) + \overline{g}$, 
where $\overline{g} := \left( \log\left(\frac{1- \rho_{\overline{f}}}{\overline{f}_i}\right)\right)_{1\leq i \leq n}$ is a constant vector in $\R^n$ and $D^2\overline{h}(u) = D^2h(u)$. 
Moreover, the entropy density $\overline{h}$ has the following interesting property: $\overline{f}$ is the unique minimizer of $\overline{h}$ on $\overline{\cD}$ so that $\overline{h}(u) \geq \overline{h}(\overline{f}) = 0$ for all 
$u\in \overline{\cD}$. In the rest of the proof, for all $w\in \R^n$, we will denote by $\overline{v}(w) = \left( \overline{v}_i(w)\right)_{1\leq i \leq n} := (D\overline{h})^{-1}(w) =Dh^{-1}(w - \overline{g})$. 

\medskip

Let $\left(\overline{w}^{\epsilon,k}\right)_{k\in\N}$ be a sequence of solutions to the regularized time-discrete problems (\ref{eq:approximate2}) defined in Lemma~\ref{lem:intermediate} 
with the constant fluxes $(\overline{\phi}_0, \cdots, \overline{\phi}_n)$ and the entropy density $\overline{h}$. The entropy inequality (\ref{eq:entropy}) then reads
\begin{align}\label{eq:entropy2}
& \frac{1}{\tau} \int_0^1 \overline{h}(\overline{v}(\overline{w}^{\epsilon,k})) + \epsilon \int_0^1 (|\partial_y \overline{w}^{\epsilon,k}|^2 + |\overline{w}^{\epsilon,k}|^2) + \frac{1}{e_k^2}\int_0^1 \partial_y \overline{w}^{\epsilon,k} \cdot B(\overline{w}^{\epsilon,k}) \partial_y \overline{w}^{\epsilon,k}\\ \nonumber
& \leq \frac{1}{\tau} \int_0^1 \overline{h}(\overline{v}(\overline{w}^{\epsilon,k-1})) + \frac{e'_k}{e_k} \left(\overline{h}(\overline{f}) - \int_0^1 \overline{h}(\overline{v}(\overline{w}^{\epsilon,k})) \right). \\ \nonumber
\end{align}
In our particular case, for all $k\in\N$, $e'_k= V$, $e_k = e_0 + Vk\tau$ and $\overline{h}(\overline{f}) = 0$, so that we obtain
$$
\frac{e_0 + V (k+1)\tau }{\tau} \int_0^1 \overline{h}(\overline{v}(\overline{w}^{\epsilon,k})) - \frac{e_0 + V k\tau }{\tau} \int_0^1 \overline{h}(\overline{v}(\overline{w}^{\epsilon,k-1})) \leq 0.
$$
This implies that for all $k\in \N$ and $\epsilon >0$, 
\begin{equation}\label{eq:ineqtime}
(e_0 + V (k+1)\tau) \int_0^1  \overline{h}(\overline{v}(\overline{w}^{\epsilon,k})) \leq (e_0 + V \tau)\int_0^1 \overline{h}(\overline{v}(w^{0})).
\end{equation}
Let us denote by $\overline{w}^{(\epsilon,\tau)}: \R_+^* \to H^1((0,1); \R^n)$ the piecewise constant in time function defined by
$$
\mbox{ for a.a. }y\in (0,1), \quad \overline{w}^{(\epsilon,\tau)}(t,y) = \overline{w}^{\epsilon,k}(y) \mbox{ if } (k-1)\tau < t \leq k\tau.
$$
Let $T>0$ and $\xi \in L^1(0,T)$ such that $\xi \geq 0$ a.e. in $(0,T)$. Inequality (\ref{eq:ineqtime}) and Fubini's theorem for integrable functions implies that
$$
\int_0^T\int_0^1 \left[ (e_0 + V (k+1)\tau) \overline{h}(\overline{v}(\overline{w}^{(\epsilon,\tau)}))    - (e_0 + V \tau) \overline{h}(\overline{v}(\overline{w}^{0})) \right] \xi(t)\,dy\,dt \leq 0.
$$
From the proof of Theorem~\ref{th:moving}, we know that up to the extraction of a subsequence which is not relabeled, $\left(\overline{v}(\overline{w}^{(\epsilon,\tau)})\right)_{\epsilon, \tau >0}$ converges strongly in 
$L^2_{\rm loc}(\R_+^*; L^2((0,1); \R^n))$ and a.e. in $\R_+^* \times (0,1)$ as $\epsilon$ and $\tau$ go to zero to a global weak solution $v$ to (\ref{eq:rescaled}).
Using Lebesgue dominated convergence theorem, and passing to the limit $\epsilon,\tau\to 0$ in the above inequality yields
$$
\int_0^T\int_0^1 \left[ (e_0 + V t)\overline{h}(v) - e_0 \overline{h}(\overline{v}(w^{0})) \right] \xi(t)\,dy\,dt \leq 0,
$$
which implies that there exists $C>0$ such that for almost all $t>0$, 
\begin{equation}\label{eq:ineq2}
(e_0 + Vt) \int_0^1 \overline{h}(v)  \leq C, 
\end{equation}
 {which yields inequality (\ref{eq:relentropy}).} In the rest of the proof, $C$ will denote an arbitrary positive constant independent on the time $t>0$. Furthermore, since 
$v\in H^1((0,T); (H^1((0,1); \R^n))') \cap L^2((0,T); H^1((0,1);\R^n))$, it holds that $v\in \cC^0((0,T); L^2((0,1);\R^n))$ from~\cite{LionsMagenes}, and the Lebesgue dominated convergence theorem implies that 
 $t\in \R_+^* \mapsto \int_0^1 \overline{h}(v(t,y))\,dy $ is a continuous function. Inequality (\ref{eq:ineq2}) then holds for all $t>0$.
 
For all $0\leq i \leq n$, let us denote by $\overline{v_i}(t):= \int_0^1 v_i(t,y)\,dy$. By convention, we define $v_0(t,y):= 1 - \rho_{v(t,y)}$ and $\overline{f}_0:= 1 - \rho_{\overline{f}}$. 
It can be checked from the weak formulation of (\ref{eq:approximate2}) that 
$$
\int_0^1 \overline{v}_i\left( \overline{w}^{\epsilon,k}\right) = \frac{k \overline{\phi}_i\tau + e_0 \int_0^1 v_i^0}{e_0 + V(k+1)\tau}.
$$
Passing to the limit $\epsilon,\tau \to 0$ using the Lebesgue dominated convergence theorem, we obtain that for almost all $t>0$,  
$$
\overline{v}_i(t) = \frac{e_0 \int_0^1 v_i^0(y)\,dy + t \overline{\phi}_i}{e_0 + Vt},
$$
so that $ \dps |\overline{v}_i(t) - \overline{f}_i| \leq \frac{C}{e_0 + Vt}$. The continuity of $\overline{v}_i$ implies that this equality holds for all $t>0$.

The Csiz\`ar-Kullback inequality states that for all $t>0$,
 {
$$
\left\| v_i(t,\cdot) - \overline{v}_i(t)\right\|^2_{L^1(0,1)} \leq 2\int_0^1 v_i(t,y) \log \frac{v_i(t,y)}{\overline{v}_i(t)}\,dy =  2 \int_0^1 v_i(t,y) \log \frac{v_i(t,y)}{\overline{f}_i}\,dy + 2 \int_0^1 v_i(t,y) \log \frac{\overline{f}_i}{\overline{v}_i(t)}\,dy.
$$
 Thus,
 \begin{align*}
 \sum_{i=0}^n  \left\| v_i(t,\cdot) - \overline{f}_i\right\|_{L^1(0,1)} & \leq \sum_{i=0}^n \left\| v_i(t,\cdot) -\overline{v}_i(t) \right\|_{L^1(0,1)} + |\overline{f}_i - \overline{v}_i(t)|\\
 & \leq \sqrt{2 \int_0^1\overline{h}(v)} +  \sum_{i=0}^n \left[ \sqrt{2\left|\log \frac{\overline{v}_i(t)}{\overline{f}_i}\right|} + |\overline{f}_i - \overline{v}_i(t)|\right]\\
 & \leq \sqrt{\frac{C}{e_0 + Vt}}.\\
 \end{align*}
 Hence inequality (\ref{eq:L1}) and} the result.
 
\subsection{Proof of Proposition~\ref{prop:optimal}}\label{sec:proof_optimal}
Let $(\Phi^m)_{m \in \N} \subset \Xi$ be a minimizing sequence for $\cJ$ i.e such that
$$
\mathop{\lim}_{m\to +\infty} \cJ(\Phi^m) = \mathop{\inf}_{\Phi \in \Xi} \cJ (\Phi). 
$$
By definition of the set $\Xi$, the sequence $(\Phi^m)_{m\in \N}$ is bounded in $L^\infty(0,T)$. Thus, up to a non relabeled extraction, 
it weakly-* converges to some limit $\Phi^*\in \Xi$ in $L^\infty(0,T)$. As a consequence, $\left(\frac{d}{dt} e_{\Phi^m}\right)_{m\in\N}$ (respectively $\left(e_{\Phi^m}\right)_{m\in\N}$) converges weakly-* (respectively strongly)
in $L^\infty(0,T)$ to $\frac{d}{dt}e_{\Phi^*}$ (respectively $e_{\Phi^*}$).

\medskip

For each $m\in \N$, let $v_{\Phi^m}$ be the unique global weak solution to (\ref{eq:rescaled}) associated to the fluxes $\Phi^m$. Its uniqueness is a consequence of assumption (C1). 
From the bounds obtained in the proof of Theorem~\ref{th:moving} and the boundedness 
of $(\Phi^m)_{m\in \N}$ in $L^\infty(0,T)$, it holds that the sequences $\|\partial_t v_{\Phi^m}\|_{L^2((0,T); (H^1(0,1))'}$, $\left\|A(v_{\Phi^m})\partial_y v_{\Phi^m}\right\|_{L^2((0,T); L^2(0,1))}$ 
and $\|\partial_y v_{\Phi^m}\|_{L^2((0,T); L^2(0,1))}$ are also uniformly bounded in $m$.

Thus, up to the extraction of a subsequence which is not relabeled, using the compact injection of $L^2((0,T); H^1((0,1);\R^n)) \cap H^1((0,T); (H^1((0,1);\R^n))')$ into $\cC((0,T); L^2((0,1);\R^n))$ 
(see~\cite{LionsMagenes}), there exists $v_*\in L^2((0,T); H^1((0,1);\R^n)) \cap H^1((0,T); (H^1((0,1);\R^n))')$ and $V_*\in L^2((0,T); L^2((0,1);\R^n))$ so that
\begin{align*}
 v_{\Phi^m} \mathop{\rightharpoonup} v_* & \mbox{ weakly in } L^2((0,T); H^1((0,1);\R^n)) \cap H^1((0,T); (H^1((0,1);\R^n))'), \\
 v_{\Phi^m} \mathop{\longrightarrow} v_* & \mbox{ strongly in }\cC((0,T); L^2((0,1);\R^n)) \mbox{ and a.e. in }(0,T)\times (0,1),\\ 
A(v_{\Phi^m})\partial_y v_{\Phi^m} \mathop{\rightharpoonup} V_* & \mbox{ weakly in }L^2((0,T); L^2((0,1);\R^n)).\\
\end{align*}
Using similar arguments as in the proof of Theorem~\ref{th:moving}, we also obtain that $V_*$ is necessarily equal to 
$A(v_*)\partial_y v_*$. Passing to the limit $m\to +\infty$, we obtain that for all $\psi\in L^2((0,T); H^1((0,1);\R^n))$, 
\begin{align*}
& \int_0^T \int_0^1 \partial_t v_* \cdot \psi\,dt\,dy + \int_0^T \int_0^1 \frac{1}{e_{\Phi^*}(t)^2} \partial_y  \psi \cdot (A(v_*) \partial_y v_*)\,dt\,dy \\
&+ \int_0^T \frac{\frac{d}{dt}e_{\Phi^*}(t)}{e_{\Phi^*}(t)}\int_0^1 (v_* \cdot \psi + y v_* \cdot \partial_y \psi)\,dt\,dy  = \int_0^T\frac{1}{e_{\Phi^*}(t)} \varphi_*(t) \cdot \psi(1)\,dt.\\
\end{align*}
Assumption (C1) yields $v_* = v_{\Phi^*}$. The above convergence results then imply that
$$
\mathcal{J}\left( \Phi^m\right) \mathop{\longrightarrow}_{m\to +\infty} \mathcal{J}(\Phi^*), 
$$
and hence $\Phi^*$ is a minimizer of problem (\ref{eq:optimal}). Hence the result.

\section{Numerical tests}\label{sec:numerics}

In this section, we present some numerical tests illustrating the results of Section~\ref{sec:flux} on the prototypical example of Section~\ref{sec:example}. In Section~\ref{sec:scheme}, we present the numerical scheme used in our simulations 
to compute an approximation of a solution of~(\ref{eq:rescaled}). In Section~\ref{sec:longtimeres} and Section~\ref{sec:optimalres}, some numerical tests which illustrate Proposition~\ref{prop:longtime} and 
Proposition~\ref{prop:optimal} are detailed. 

\subsection{Discretization scheme}\label{sec:scheme}

In view of the optimization problem (\ref{eq:optimal}) we are aiming at, it appears that a fully implicit unconditionally stable scheme is needed to allow the use of reasonably large time steps.

\medskip

We present here the numerical scheme used for the discretization of~(\ref{eq:rescaled}), for the particular model presented in Section~\ref{sec:example}. We do not provide a rigorous numerical analysis for this scheme here. 

Let $M\in\N^*$ and $\Delta t:= \frac{T}{M}$. We define for all $0\leq m \leq M$, $t_m:= m \Delta t$. The discrete external fluxes are characterized for every 
$0\leq i \leq n$ by vectors  $\widehat{\phi}_i:= \left(\widehat{\phi}_i^m\right)_{1\leq m \leq M} \in \R^M_+$, where 
$\widehat{\phi}_i^m  = \int_{t_{m-1}}^{t_m} \phi_i(s)\,ds$.
For every  $1 \leq m \leq M$, the thickness of the thin film and it derivative at time $t_m$ are approximated respectively by 
$$
e_m := e_0 + \sum_{p=1}^m  \sum_{i=0}^n \widehat{\phi}_i^p \Delta t  \approx e(t_m), \quad \mbox{ and } e^d_m :=  \sum_{i=0}^n \widehat{\phi}_i^m \approx e'(t_m). 
$$

\medskip

In addition, let $Q\in \N^*$ and $\Delta y:= \frac{1}{Q}$ and $y_q:= (q-0.5)\Delta y$. For all $0\leq i \leq n$, $1\leq q\leq Q$ and $0\leq m \leq M$, 
we denote by $v_i^{m,q}$ the finite difference approximation of $v_i$ at time $t_m$ and point $y_q\in(0,1)$. Here again, we use the convention that $v_0 = 1 - \rho_v$. 

We use a centered second-order finite difference scheme for the diffusive part of the equation, and 
a first-order upwind scheme for the advection part, together with a fully implicit time scheme. Assuming that the approximation 
$\left(v_i^{m-1,q}\right)_{0\leq i \leq n, 1\leq q\leq Q}$ is known, one computes $\left(\widetilde{v}_i^{m,q}\right)_{0\leq i \leq n,~ 1\leq q\leq Q}$ as solutions of the following sets of equations.

\medskip

For all $0\leq i \leq n$ and $2\leq q\leq Q-1$, 
\begin{align}\label{eq:bulk}
 \frac{\left( \widetilde{v}_i^{m,q} - v_i^{m-1,q}\right) }{\Delta t} &=  \frac{e^d_m}{e_m} y_q  \left(  \frac{\widetilde{v}^{m,q+1}_i -\widetilde{v}^{m,q}_i}{\Delta y} \right) \\ \nonumber
 &+  \sum_{0\leq j \neq i \leq n}  \frac{K_{ij}}{e_m^2} \left[\widetilde{v}^{m,q}_j  \left( \frac{\widetilde{v}^{m,q+1}_i +\widetilde{v}^{m,q-1}_i -2 \widetilde{v}^{m,q}_j}{2\Delta y^2}\right)   - \widetilde{v}^{m,q}_i \left( \frac{ \widetilde{v}^{m,q+1}_j +\widetilde{v}^{m,q-1}_j -2 \widetilde{v}^{m,q}_j}{2\Delta y^2}\right)  \right] \\ \nonumber
\end{align}
\normalsize
together with boundary conditions which reads for all $0\leq i \leq n$,  
\begin{align}\label{eq:BC}
\sum_{0\leq j \neq i \leq n}  \frac{K_{ij}}{e_m} \left[\widetilde{v}^{m,1}_j  \left( \frac{\widetilde{v}^{m,2}_i - \widetilde{v}^{m,1}_i }{\Delta y}\right)   - \widetilde{v}^{m,1}_i \left( \frac{ \widetilde{v}^{m,2}_j -\widetilde{v}^{m,1}_j}{\Delta y}\right)  \right] &  = 0, \\ \label{eq:BC1}
\sum_{0\leq j \neq i \leq n}  \frac{K_{ij}}{e_m} \left[\widetilde{v}^{m,Q}_j  \left( \frac{\widetilde{v}^{m,Q-1}_i - \widetilde{v}^{m,Q}_i }{\Delta y}\right)   - \widetilde{v}^{m,Q}_i \left( \frac{ \widetilde{v}^{m,Q-1}_j -\widetilde{v}^{m,Q}_j}{\Delta y}\right)  \right] &  = -e^d_m \widetilde{v}_i^{m,Q} + \widehat{\phi}_i^m.\\ \nonumber
\end{align}
The nonlinear system of equations (\ref{eq:bulk})-(\ref{eq:BC})-(\ref{eq:BC1}), whose unknowns are $\left(\widetilde{v}_i^{m,q}\right)_{0\leq i \leq n, 1\leq q \leq Q}$ 
is solved using Newton iterations with initial guess $\left(v_i^{m-1,q}\right)_{0\leq i \leq n, 1\leq q \leq Q}$. The obtained solution does not satisfy in general 
the desired non-negativeness and volumic constraints. 
This is the reason why an additional projection step is performed. For all $0\leq i \leq n$ and $1\leq q\leq Q$, we define
$$
v_i^{m,q}:= \frac{[\widetilde{v}_i^{m,q}]_+}{\sum_{j=0}^n[\widetilde{v}_j^{m,q}]_+}, 
$$
so that 
$$
v_i^{m,q}\geq 0 \quad \mbox { and } \sum_{j=0}^n v_j^{m,q} = 1.
$$

We numerically observe that this scheme is unconditionally stable with respect to the choice of discretization parameters $\Delta t$ and $\Delta y$. 

\medskip 
A standard practice in the production of thin film CIGS (Copper, Indium, Gallium, Selenium) solar cells by means of PVD process is to consider piecewise-constant external fluxes. We refer the reader to~\cite{theseAthmane} for further details. 
In the following numerical tests, we consider time-dependent functions of the form  
\begin{equation} 
\label{def:piecewiseconstant}
 \phi_{\rm i}  (t) = \left \{  
\begin{array}{lcl} 
\alpha^i_1 & \quad & 0<t\leq \tau^i_1, \\
\alpha^i_2 & \quad & \tau^i_1<t\leq \tau^i_2, \\
\alpha^i_3 & \quad & \tau^i_2<t\leq T, \\
\end{array} 
\right. 
\end{equation} 
where $0<\tau^i_1 < \tau^i_2 <T$ and $(\alpha^i_1, \alpha^i_2, \alpha^i_3) \in (\R_+)^3$ are non-negative constants for all $0 \leq i \leq n$. 
Besides, we consider initial condition of the form 
\begin{equation} 
\label{def:initialcondition}
v^0_i(y) = \dfrac{w_i(y)}{\sum_{j=0}^n w_j(y)} \quad \forall 0 \leq i \leq n, 
\end{equation} 
where $w_i:[0,1] \to \R_+$ are functions which will be precised below.  In the whole section, system~\eqref{eq:rescaled} is simulated with four species (i.e. $n=3$).  
\medskip 

In Figure~\ref{fig:simple_simulation}  are plotted the results obtained for the simulation of~\eqref{eq:rescaled} with the following parameters : 
\begin{itemize} 
\item $T=200$, $M=200$, $Q=100$, $\Delta t = 1$, $\Delta y = 0.01$, $e_0 = 1$.  
\item Cross-diffusion coefficients $K_{ij}$ 

$$
\centering
\begin{array}{|c|c|c|c|c| }
 \hline
    	&j=0 		&  j=1 		&  j=2		& j=3 \\ \hline
   i=0 & 0	 	& 0.1141 		& 0.0776 		&  0.0905 \\
   i=1 & 0.1141	& 0 			&  0.0646	 	&0.0905\\
   i=2 & 0.0776 	& 0.0646 		& 0	 		&0.0905\\ 
   i=3 & 0.0905 	& 0.0905		& 0.0905 		&0\\ 
   \hline
\end{array}
$$

\item External fluxes of the form~\eqref{def:piecewiseconstant} with $\tau_1^i = 66$ and $\tau^i_2 = 132$ for every $0\leq i \leq n$ and with
$$
\centering
\begin{array}{|c|c|c|c|c| }
 \hline
    &i=0&  i=1 & i=2 &  i=3 \\ \hline
   \alpha^i_1 & 0.9 & 2 & 0.2 & 0.7 \\
   \alpha^i_2 & 1.4 & 1.5 & 1.2 &0.3\\
   \alpha^i_3 & 0.9 & 2 & 0.2 &0.7\\ \hline
\end{array}
$$
\item Initial concentrations $v^0_i$ of the form~\eqref{def:initialcondition} with $w_0(y) = y$, $w_1(y) = 2y$, $w_2(y) = \sqrt{y}$ and $w_3(y) =0$.
\end{itemize}   

The profile of the external fluxes is plotted in Figure~\ref{fig:simple_simulation}-(a). In Figure~\ref{fig:simple_simulation}-(b) and Figure~\ref{fig:simple_simulation}-(c)  are given respectively the the initial and the final concentrations of the four species.

\begin{figure}[h!] 
\centering 
\subfloat[]{\includegraphics[width=5.2cm, height=4.5cm]{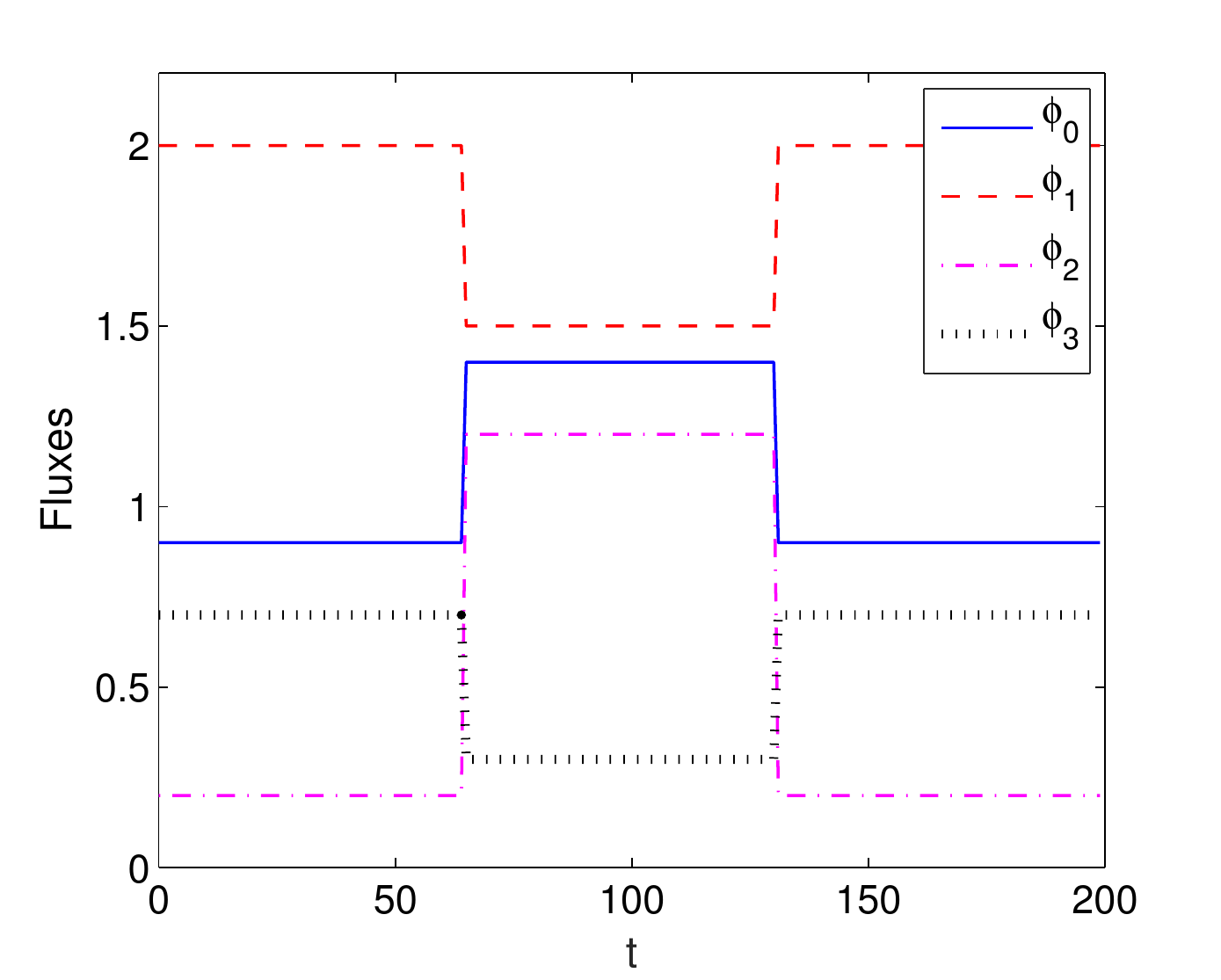}}
\subfloat[]{\includegraphics[width=5.2cm, height=4.5cm]{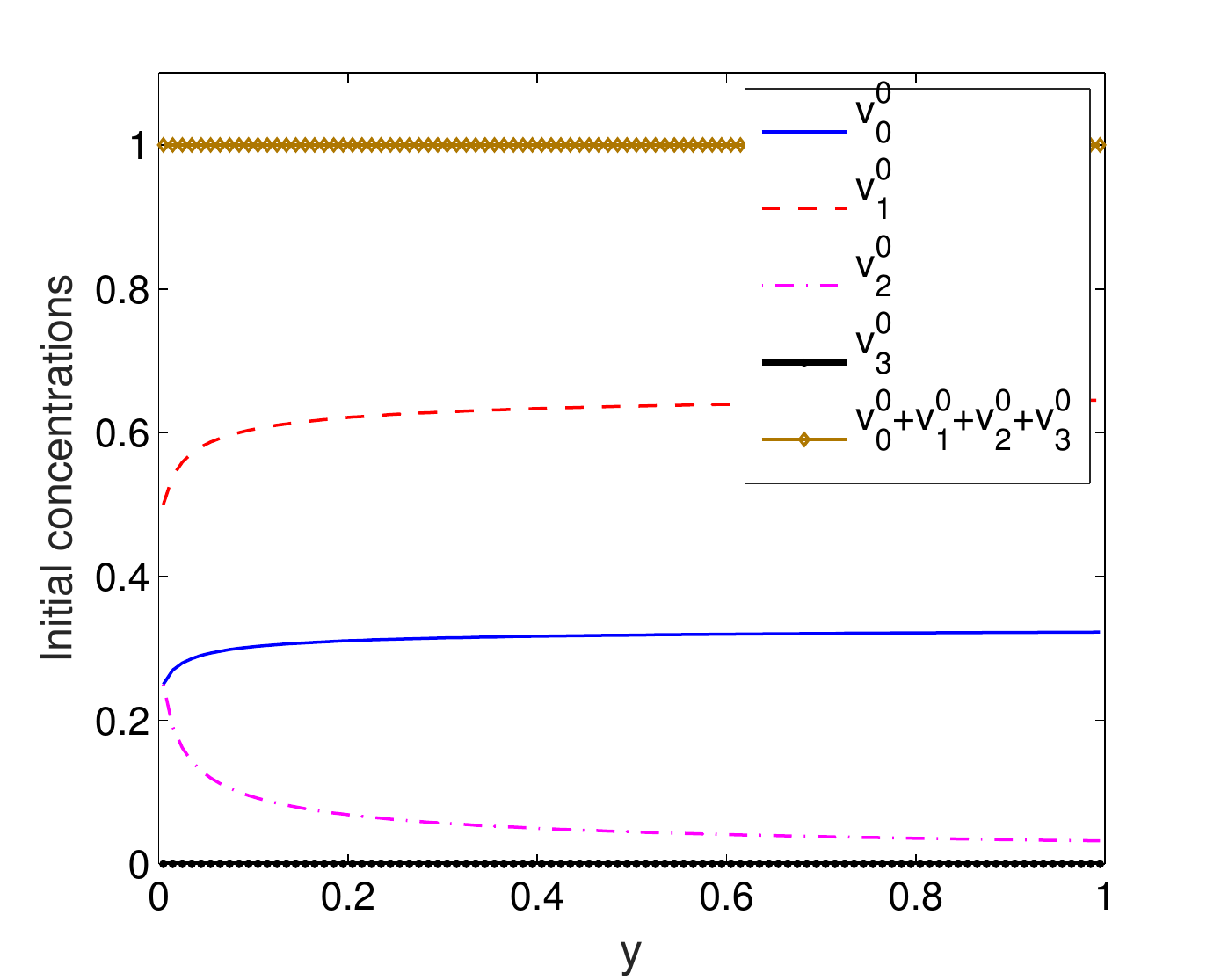}}
\subfloat[]{\includegraphics[width=5.2cm, height=4.5cm]{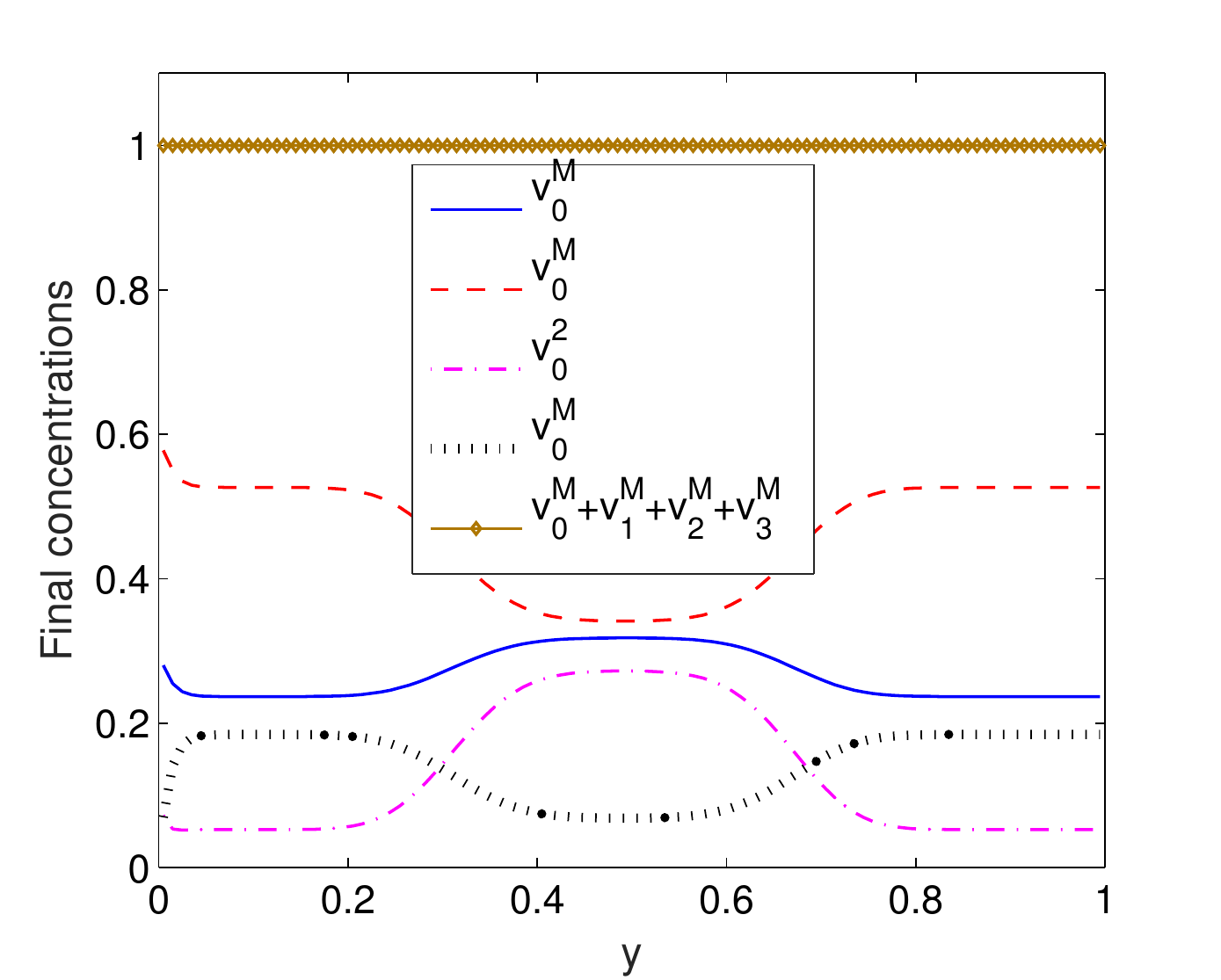}}
\caption{Simulation of \eqref{eq:rescaled}.} 
\label{fig:simple_simulation} 
\end{figure} 

\subsection{Long-time behaviour results}\label{sec:longtimeres}
In this section is given a numerical illustration of Proposition~\ref{prop:longtime}. We consider time-dependent functions of the form  
\begin{equation} 
\label{def:constant}
\phi_i(t) = \beta_i, \quad \forall 0 \leq t \leq T. 
\end{equation}
where $(\beta_i)_{0 \leq i \leq n} \in (\R^*_+)^{n+1}$. In Figure~\ref{fig:longtime} are plotted the results obtained for the the simulation of~\eqref{eq:rescaled} with the following parameters : 
\begin{itemize} 
\item   {$T=2000$, $M=2000$, $Q=100$, $\Delta t = 1$, $\Delta y = 0.01$, $e_0 = 1$.}  
\item Cross-diffusion coefficients $K_{ij}$ 

$$
\centering
\begin{array}{|c|c|c|c|c| }
 \hline
    	&j=0 		&  j=1 		&  j=2		& j=3 \\ \hline
   i=0 & 0	 	& 0.1141 		& 0.0776 		&  0.0905 \\
   i=1 & 0.1141	& 0 			&  0.0646	 	&0.0905\\
   i=2 & 0.0776 	& 0.0646 		& 0	 		&0.0905\\ 
   i=3 & 0.0905 	& 0.0905		& 0.0905 		&0\\ 
   \hline
\end{array}
$$

\item External fluxes of the form~\eqref{def:constant} with 
$$
\centering
\begin{array}{|c|c|c|c|c| }
 \hline
    &\mbox{i=0} &   \mbox{i=1} &   \mbox{i=2} &   \mbox{i=3} \\ \hline
   \beta^i & 0.9 &  {0.8} & 1.7 & 0.5 \\ \hline
\end{array}
$$
\item Initial concentrations $v^0_i$ of the form~\eqref{def:initialcondition} with 
$$
 {w_0(y) = \exp \left(-\dfrac{(y-0.5)^2}{0.04} \right), \quad w_1(y) = y^2, \quad  w_2(y) = 1 - w_0(y), \quad w_3(y) = | \sin(\pi y) |}.$$
\end{itemize}   

For all $0\leq i \leq n$, let $\bar{v}_i := \beta_i / \sum_{j=0}^n \beta_j$.  {We consider the time-dependent quantity 
$$
\gamma(t) = \dfrac{1}{\overline{h}(v(t,\cdot))}
$$
where the relative entropy $\overline{h}$ is defined in \eqref{eq:relentropy}. We also consider the quantities 
$$\eta_i(t) = \dfrac{1}{  \| v_i(t, \cdot) - \bar{v}_i\|^2_{L^1(0,1)}}$$ 
and 
$$\eta(t) =\dfrac{1}{  \sum \limits_{i=0}^n   \| v_i(t, \cdot) - \bar{v}_i\|^2_{L^1(0,1)}}   $$
}

In Figure~\ref{fig:longtime}-(a) and~\ref{fig:longtime}-(b) are plotted respectively the initial and the final concentration profiles.

\medskip 
  {The evolution of $\left(\eta_i(t) \right)_{0 \leq i \leq n}$ (respectively $\eta(t)$ and $\gamma(t)$) with respect to $t$ is shown in Figure~\ref{fig:longtime}-(c) (respectively \ref{fig:longtime}-(d) and ~\ref{fig:longtime}-(e)). We numerically observe that these quantities are affine functions of $t$ in the asymptotic regime 
 which illustrates the theoretical result of Proposition~\ref{prop:longtime}.}

\begin{figure}[h!]  
\centering
\subfloat[]{
\includegraphics[width=7cm, height=6cm]{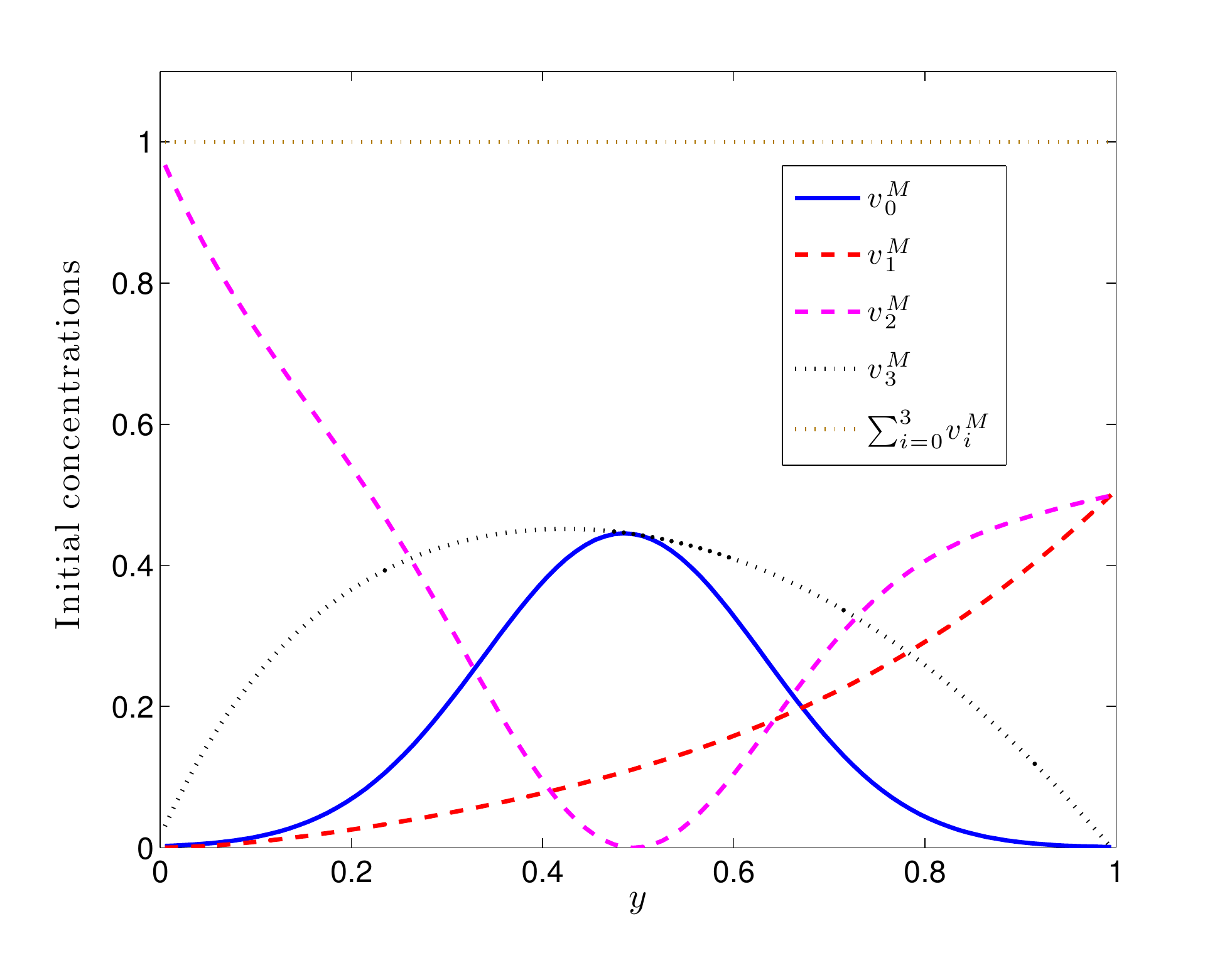}}
\subfloat[]{
\includegraphics[width=7cm, height=6cm]{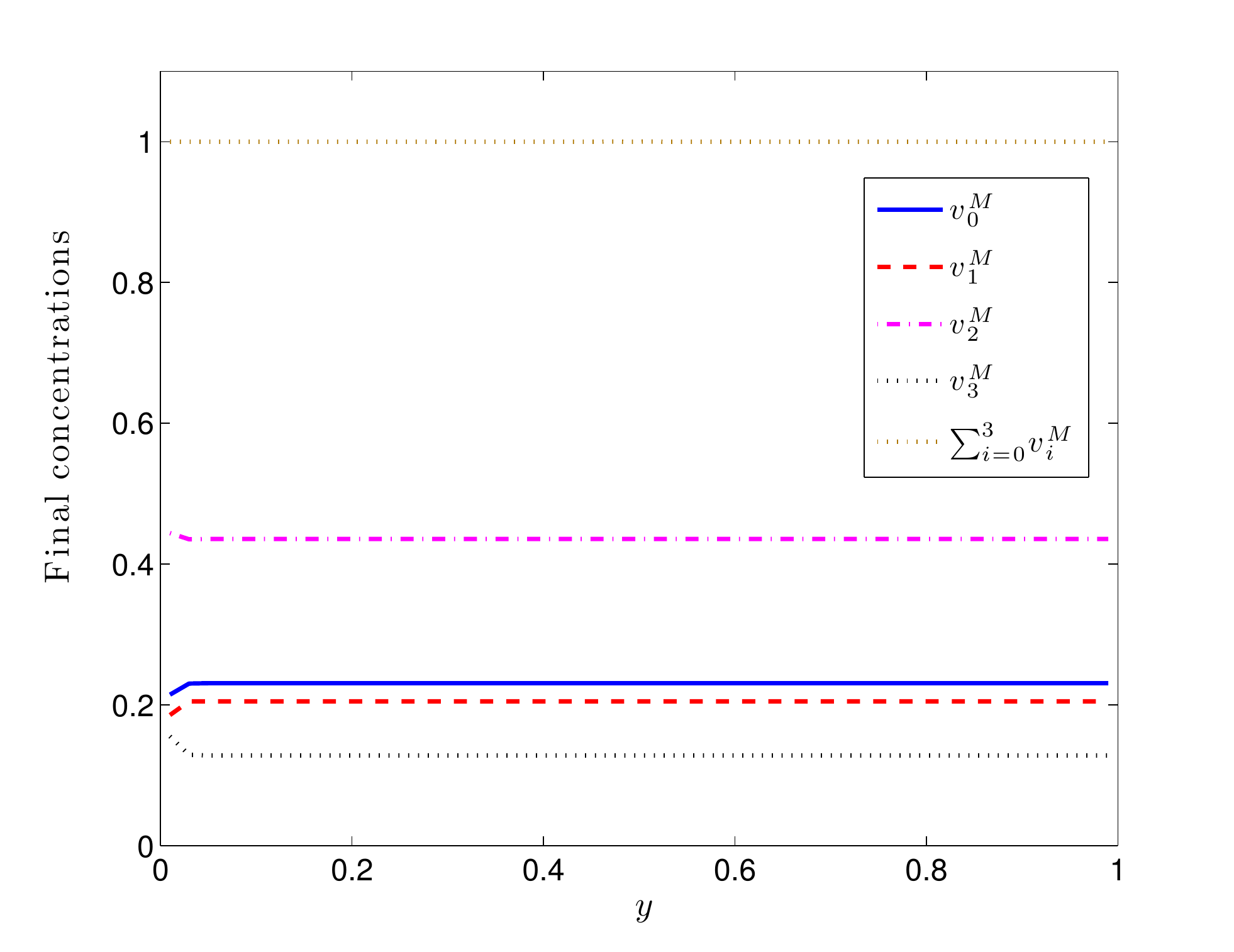}}
\newline
\subfloat[]{
\includegraphics[width=7cm, height=6cm]{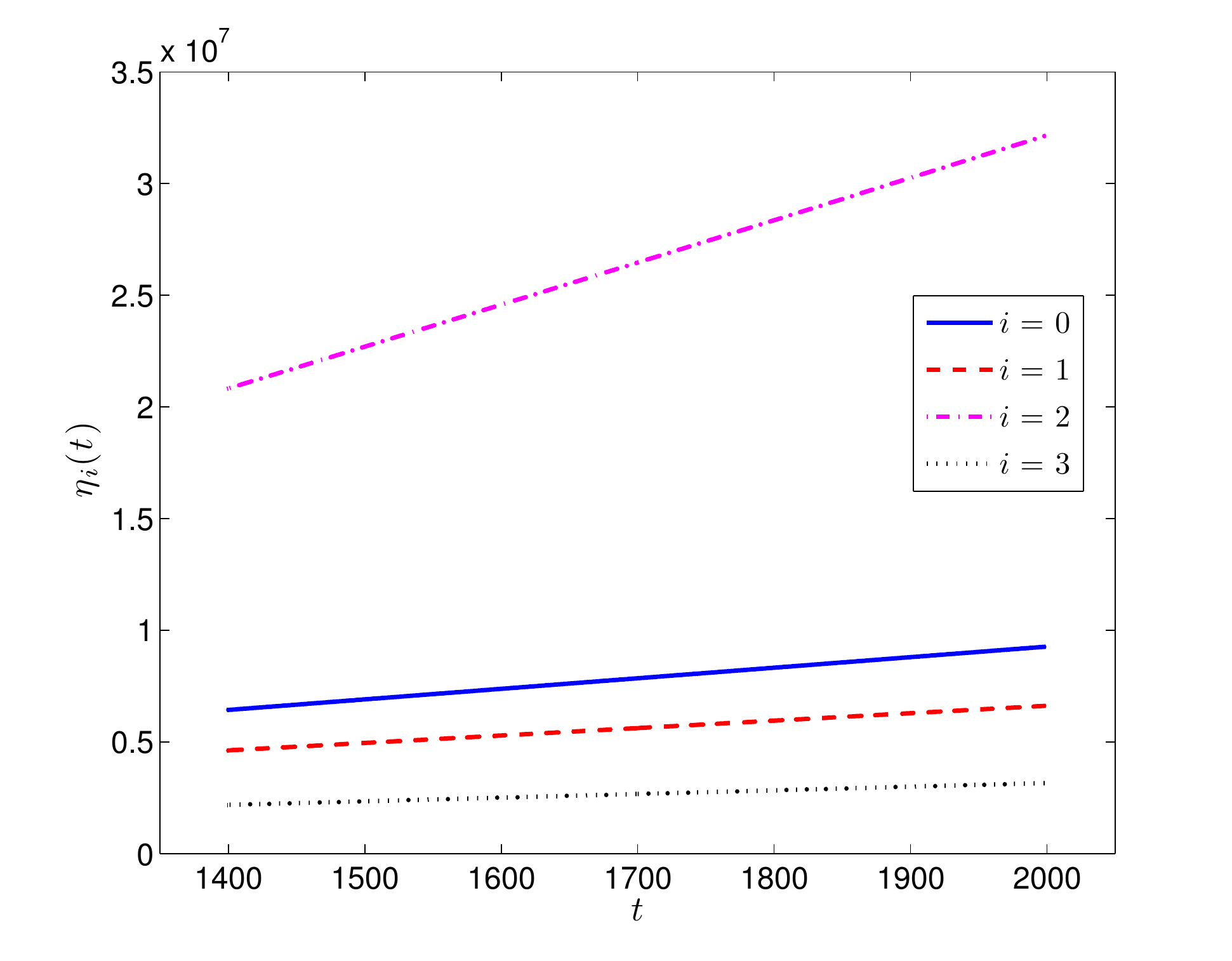}}
\subfloat[]{
\includegraphics[width=7cm, height=6cm]{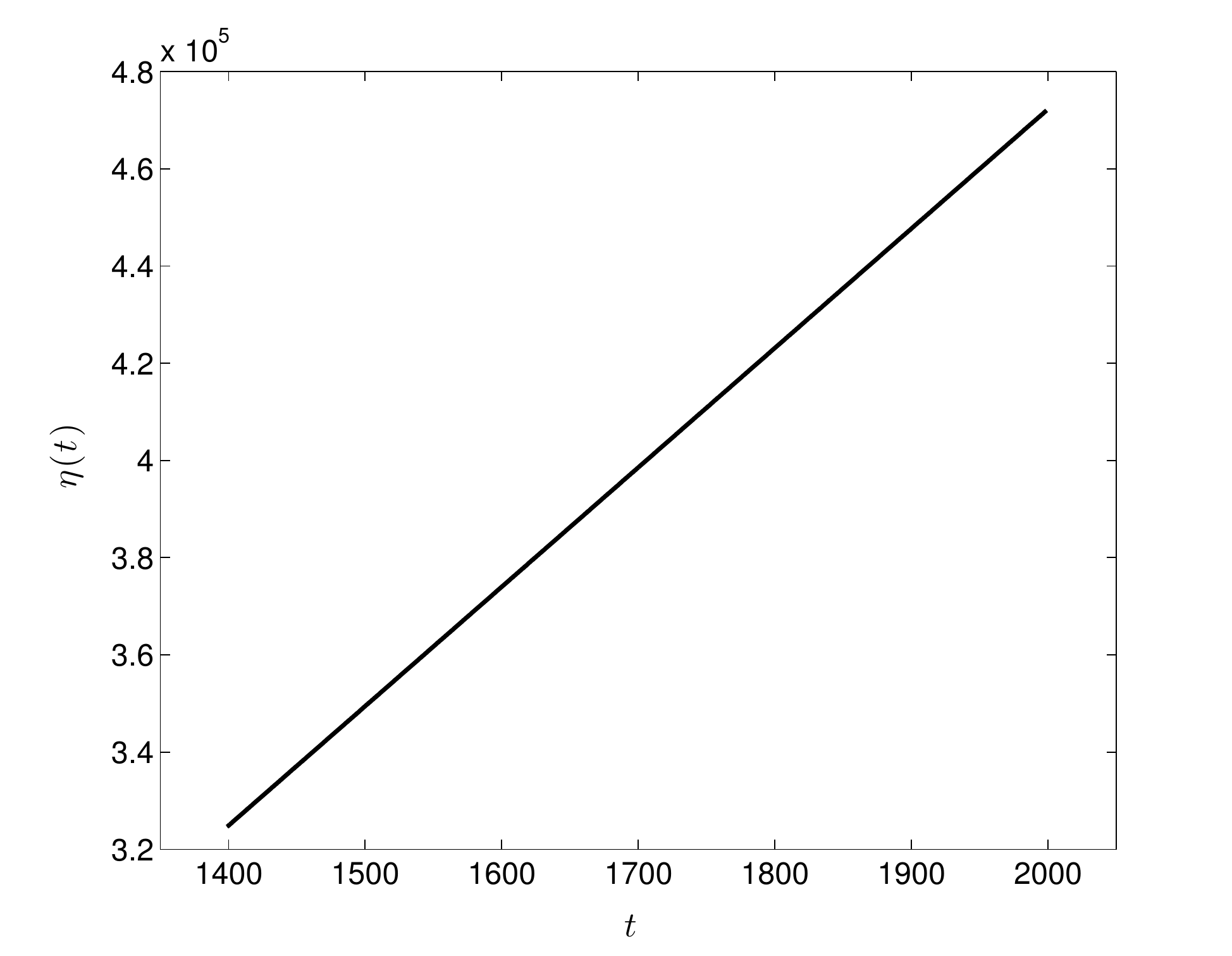}}
\newline
\subfloat[]{
\includegraphics[width=7cm, height=6cm]{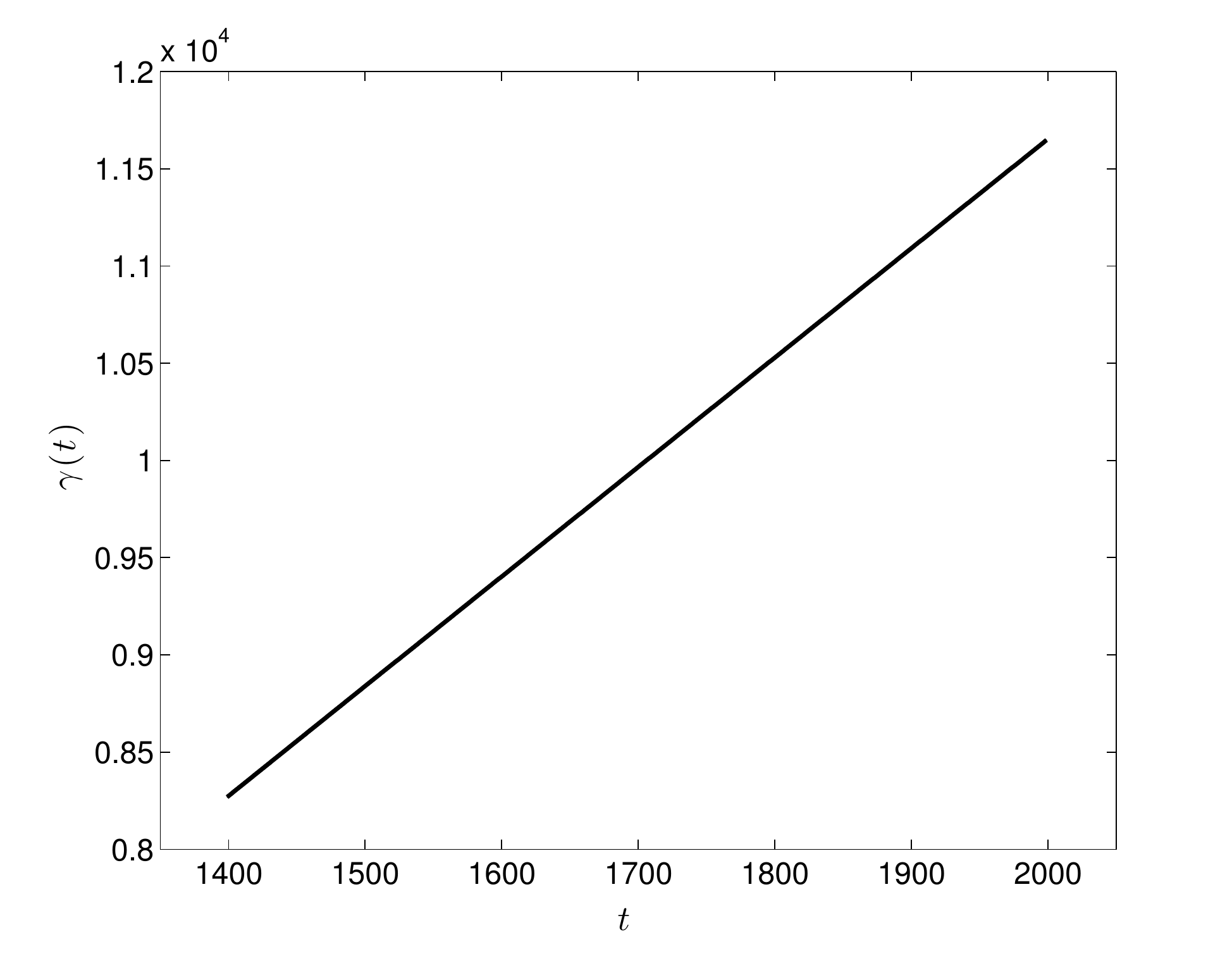}}
\caption{Long-time behavior in the case of non negative constant external fluxes.}
\label{fig:longtime}
\end{figure} 

\subsection{Optimization of the fluxes}\label{sec:optimalres}

The optimization problem~\eqref{eq:optimal} is solved in practice using an adjoint formulation associated to the discretization scheme described in Section~\ref{sec:scheme}. We refer 
the reader to~\cite{theseAthmane} for more details and comparisons between our model and experimental results obtained in the context of thin film CIGS solar cell fabrication. To illustrate Proposition~\ref{prop:optimal}, we proceed as follows: first, we perform a simulation of~\eqref{eq:rescaled} with external fluxes $\Phi_{\rm sim}$ for a duration $T$ to obtain a final thickness $e_{\Phi_{\rm sim}}(T)$ and final concentrations $v_{\Phi_{\rm sim}}(T, \cdot)$, then, we solve the minimization problem~\eqref{eq:optimal} to obtain optimal fluxes $\Phi^*$ where the target concentrations are 
$$v_{\rm opt} (y) = v_{\Phi_{\rm sim}}(T, y) \quad \forall ~y \in (0,1) $$
and the target thickness is 
$$ e_{\rm opt} = e_{\Phi_{\rm sim}}(T).$$ 
Lastly, we perform another simulation of~\eqref{eq:rescaled} with the obtained optimal fluxes $\Phi^*$ and compare the final concentrations $v_{\Phi^*}$ and the final thickness $e_{\Phi^*}$ to the target concentrations $v_{\rm opt}$ and the target thickness $ e_{\rm opt}$.

In Figures~\ref{fig:species0}-(a), \ref{fig:species1}-(a), \ref{fig:species2}-(a) and \ref{fig:species3}-(a) are plotted the final concentration profiles $v_{\Phi_{\rm sim}}(T, \cdot)$ resulting from the simulation of~\eqref{eq:rescaled} with the following parameters :
\begin{itemize} 
\item $T=120$, $M=120$, $Q=100$, $\Delta t = 1$, $\Delta y = 0.01$, $e_0 = 1$.  
\item Cross-diffusion coefficients $K_{ij}$ 
$$
\centering
\begin{array}{|c|c|c|c|c| }
 \hline
    	&j=0 		&  j=1 		&  j=2		& j=3 \\ \hline
   i=0 & 0	 	& 0.1141 		& 0.0776 		&  0.0905 \\
   i=1 & 0.1141	& 0 			&  0.0646	 	&0.0905\\
   i=2 & 0.0776 	& 0.0646 		& 0	 		&0.0905\\ 
   i=3 & 0.0905 	& 0.0905		& 0.0905 		&0\\ 
   \hline
\end{array}
$$

\item External fluxes $\Phi_{\rm sim}$ of the form~\eqref{def:piecewiseconstant} with 
$$
\centering
\begin{array}{|c|c|c|c|c| }
 \hline
    &i=0&  i=1 & i=2 &  i=3 \\ \hline
   \alpha^i_1 & 0.9 & 2 & 0.2 & 0.7 \\
   \alpha^i_2 & 1.4 & 1.5 & 1.2 &0.3\\
   \alpha^i_3 & 0.9 & 2 & 0.2 &0.7\\ \hline
\end{array}
$$

\item Initial concentrations $v^0_i$ of the form~\eqref{def:initialcondition} with $w_0(y) = y$, $w_1(y) = 2y$, $w_2(y) = \sqrt{y}$ and $w_3(y) =0$.
\end{itemize}   
We use a quasi-Newton iterative gradient algorithm for the resolution of the minimization problem. At each iteration of the algorithm, the approximate hessian is updated by means of a BFGS procedure and the optimal step size is the solution of a line search subproblem. More details on the numerical optimization algorithms can be found in \cite{BONNANS}.  The initial guess $\Phi^0$ is taken of the form~\eqref{def:constant} where $\beta^i = 1$ for all $0 \leq i \leq n$. 

\medskip 
The algorithm is run until one of the following stopping criterion is reached : either $ \left(  \cJ(\Phi) \leq \varepsilon \right) $ or $\left( \| \nabla_{\Phi}\cJ(\Phi) \|_{L^2} \leq \nu \right)$ with $\varepsilon = 10^{-5}$ and $\nu = 10^{-5}$.

\begin{figure}[h!]  
\centering
\subfloat[]{\includegraphics[width=7cm, height=5cm]{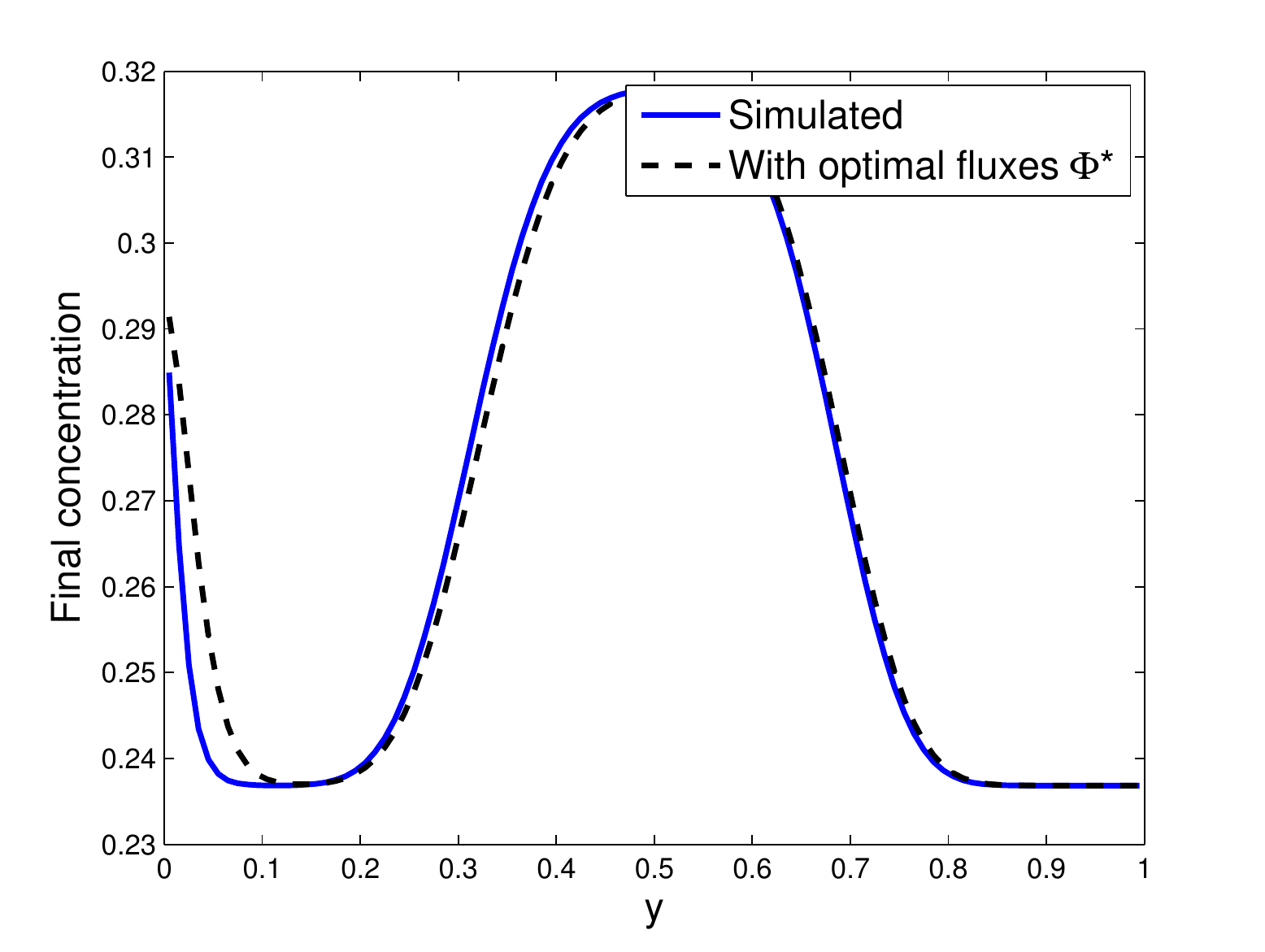}}
\subfloat[]{\includegraphics[width=7cm, height=5cm]{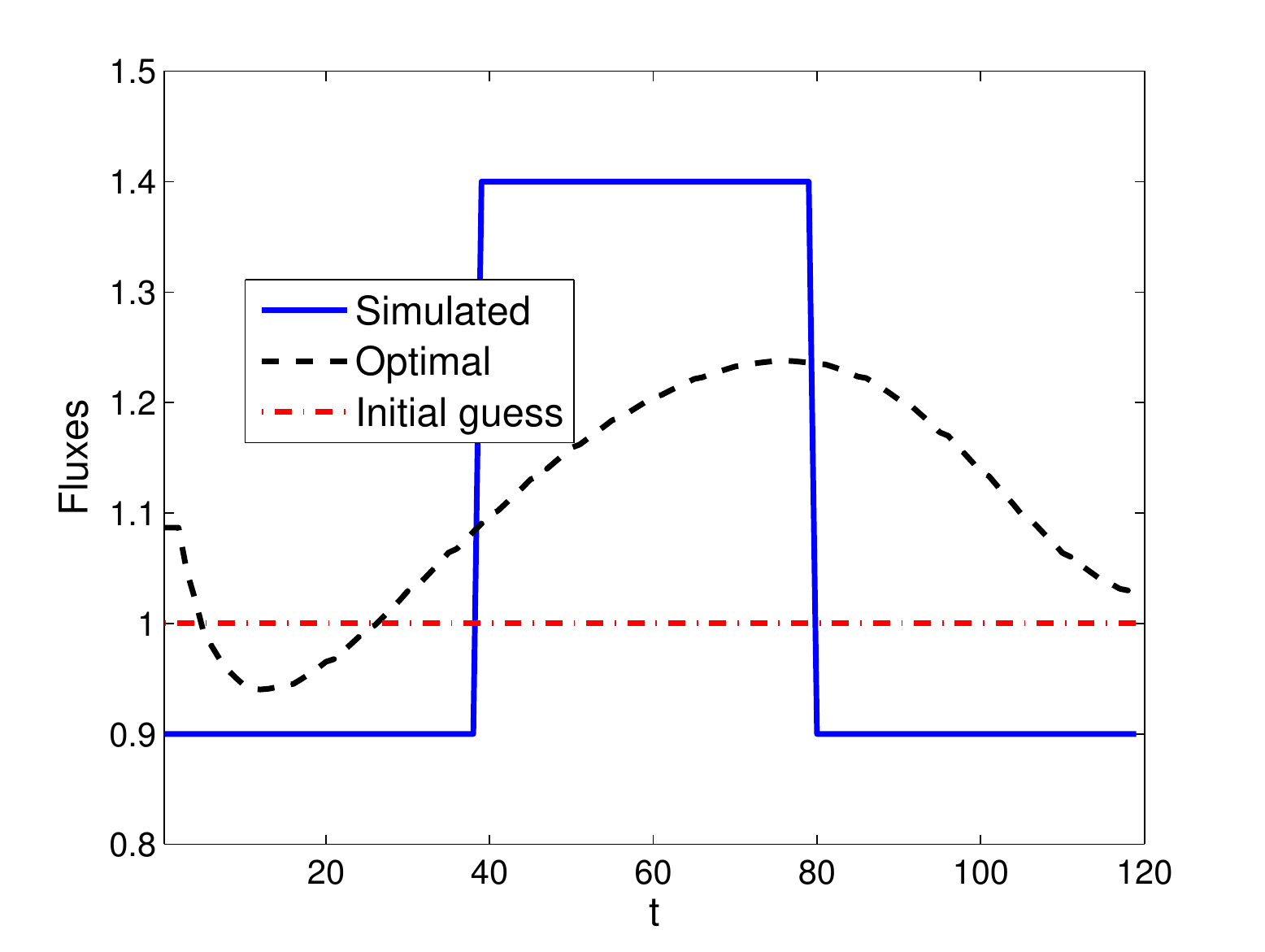}} 
\caption{Reconstruction of the final concentration of the species $i=0$.}
\label{fig:species0}
\end{figure} 
\begin{figure}[h!]  
\centering
\subfloat[]{\includegraphics[width=7cm, height=5cm]{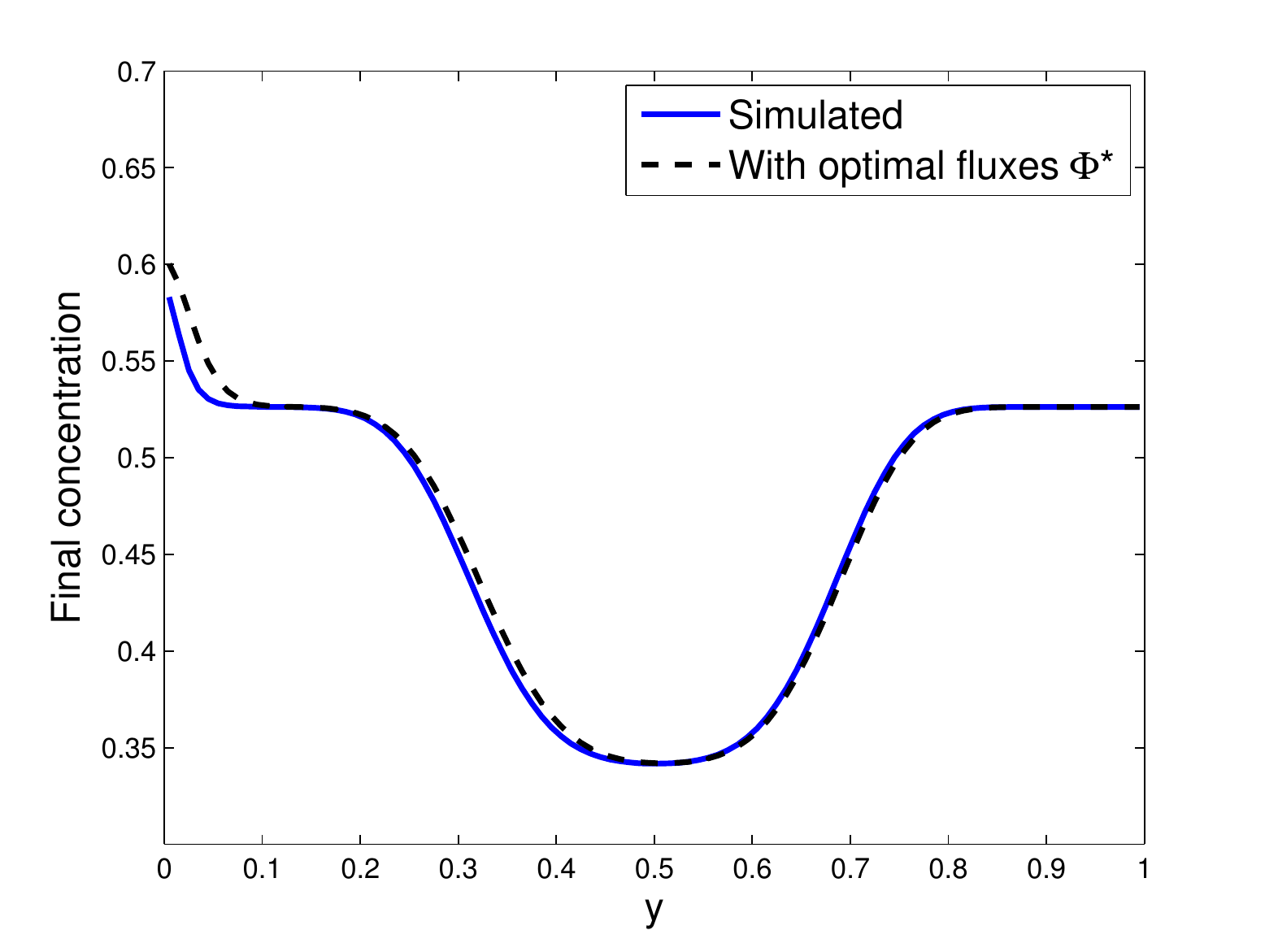}}
\subfloat[]{\includegraphics[width=7cm, height=5cm]{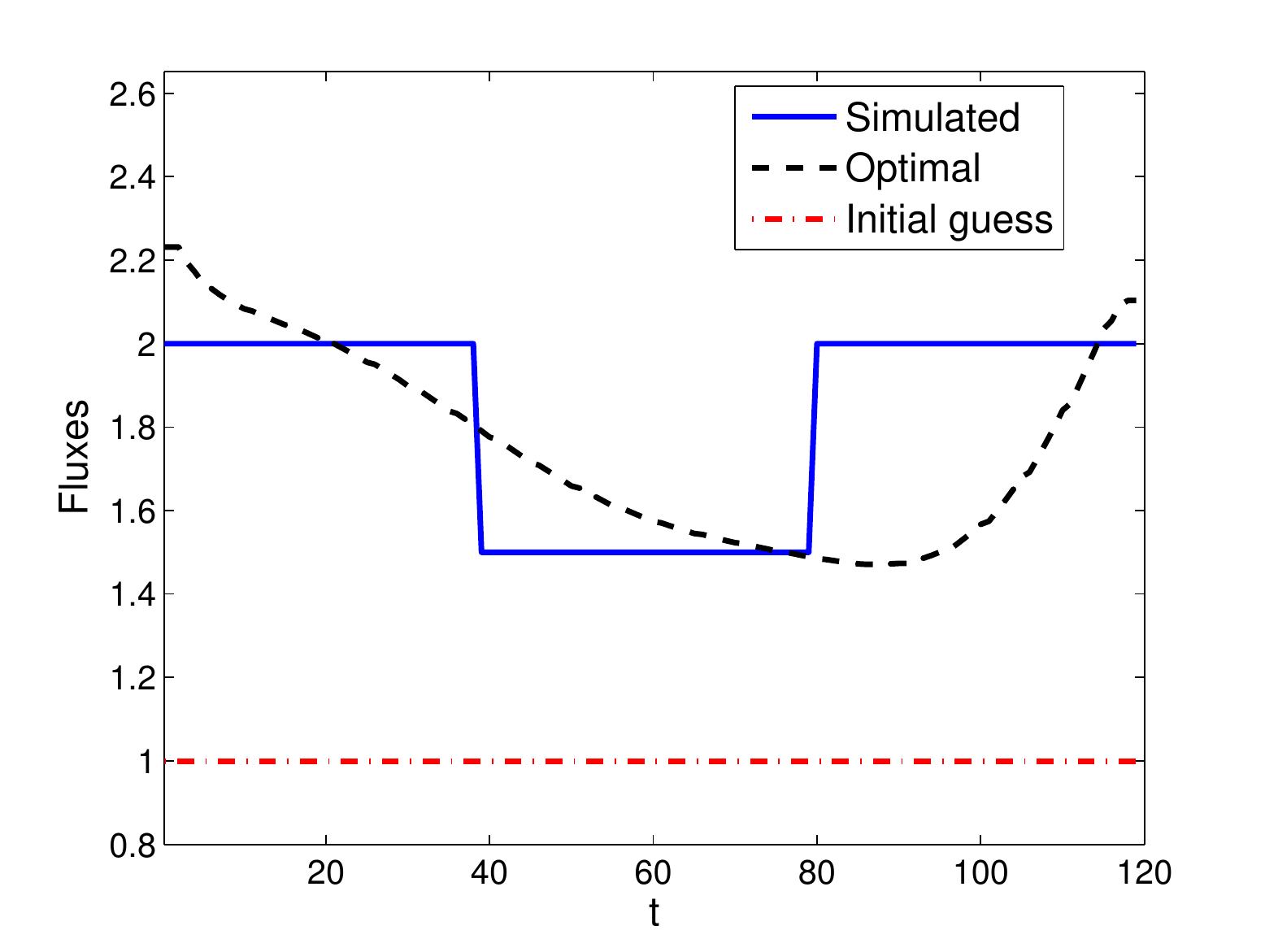}} 
\caption{
Reconstruction of the final concentration of the species $i=1$. 
}
\label{fig:species1}
\end{figure} 

\begin{figure}[h!]  
\centering
\subfloat[]{\includegraphics[width=7cm, height=5cm]{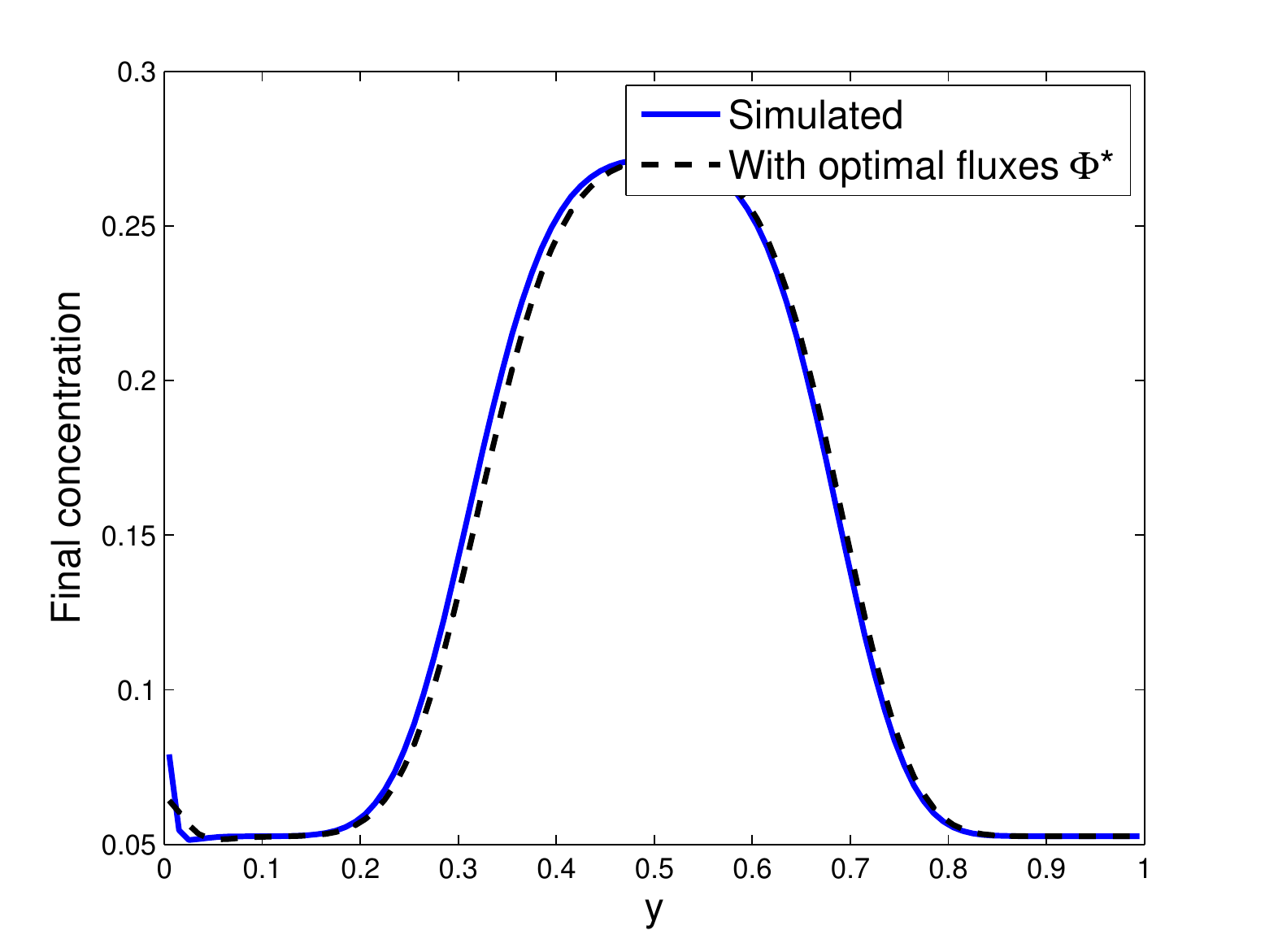}}
\subfloat[]{\includegraphics[width=7cm, height=5cm]{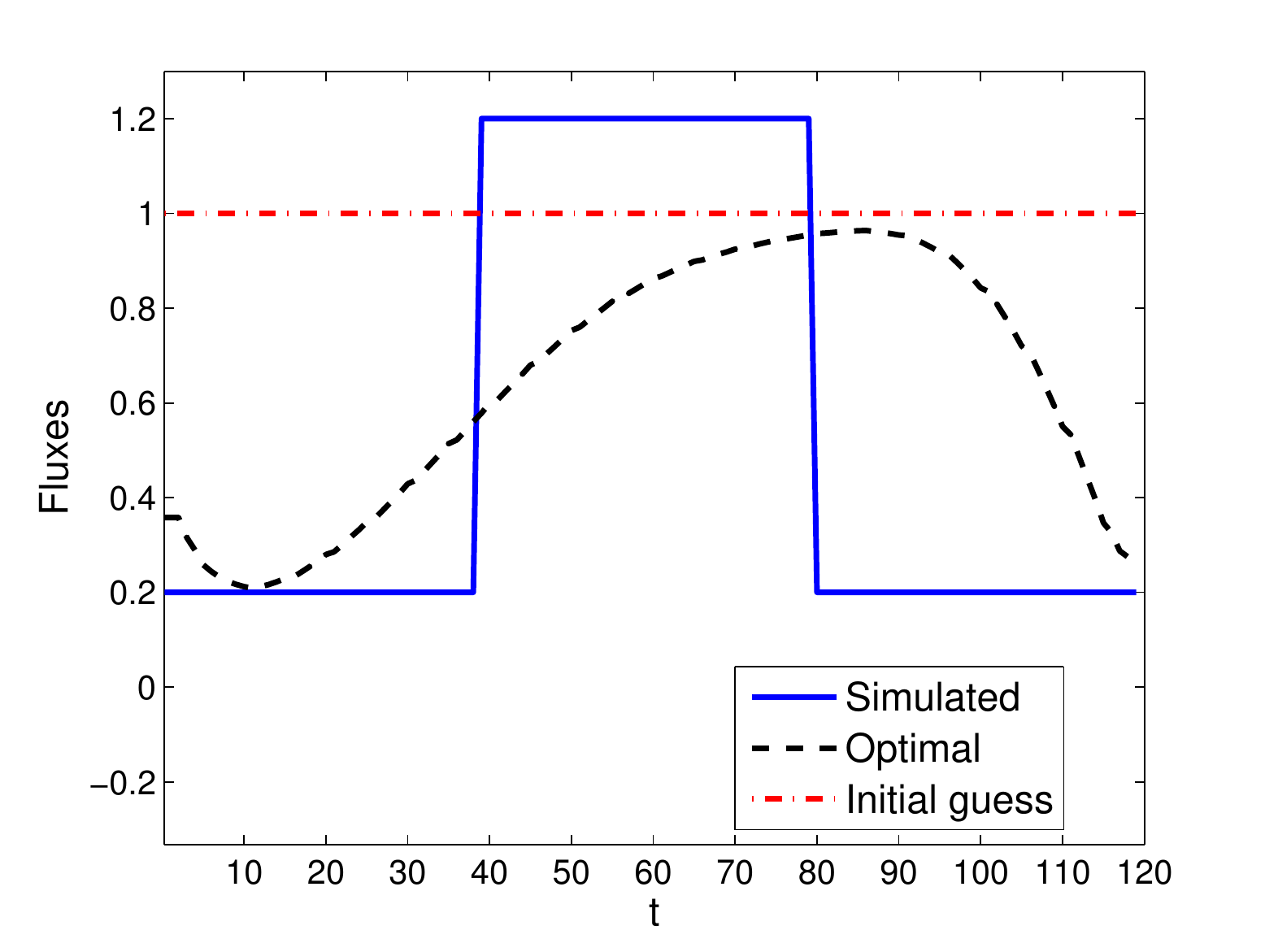}} 
\caption{
Reconstruction of the final concentration of the species $i=2$. 
}

\label{fig:species2}
\end{figure} 

\begin{figure}[h!]  
\centering
\subfloat[]{\includegraphics[width=7cm, height=5cm]{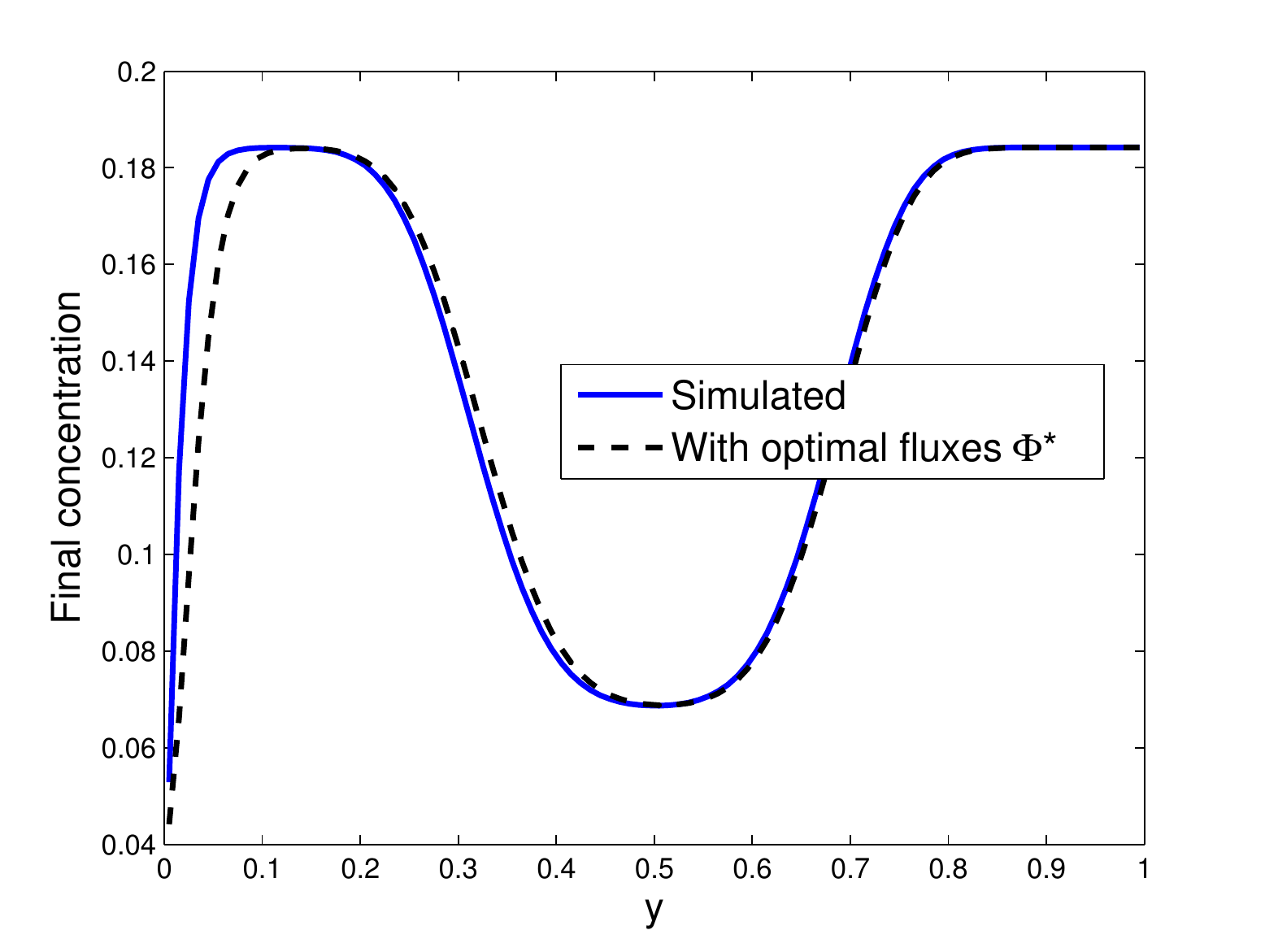}}
\subfloat[]{\includegraphics[width=7cm, height=5cm]{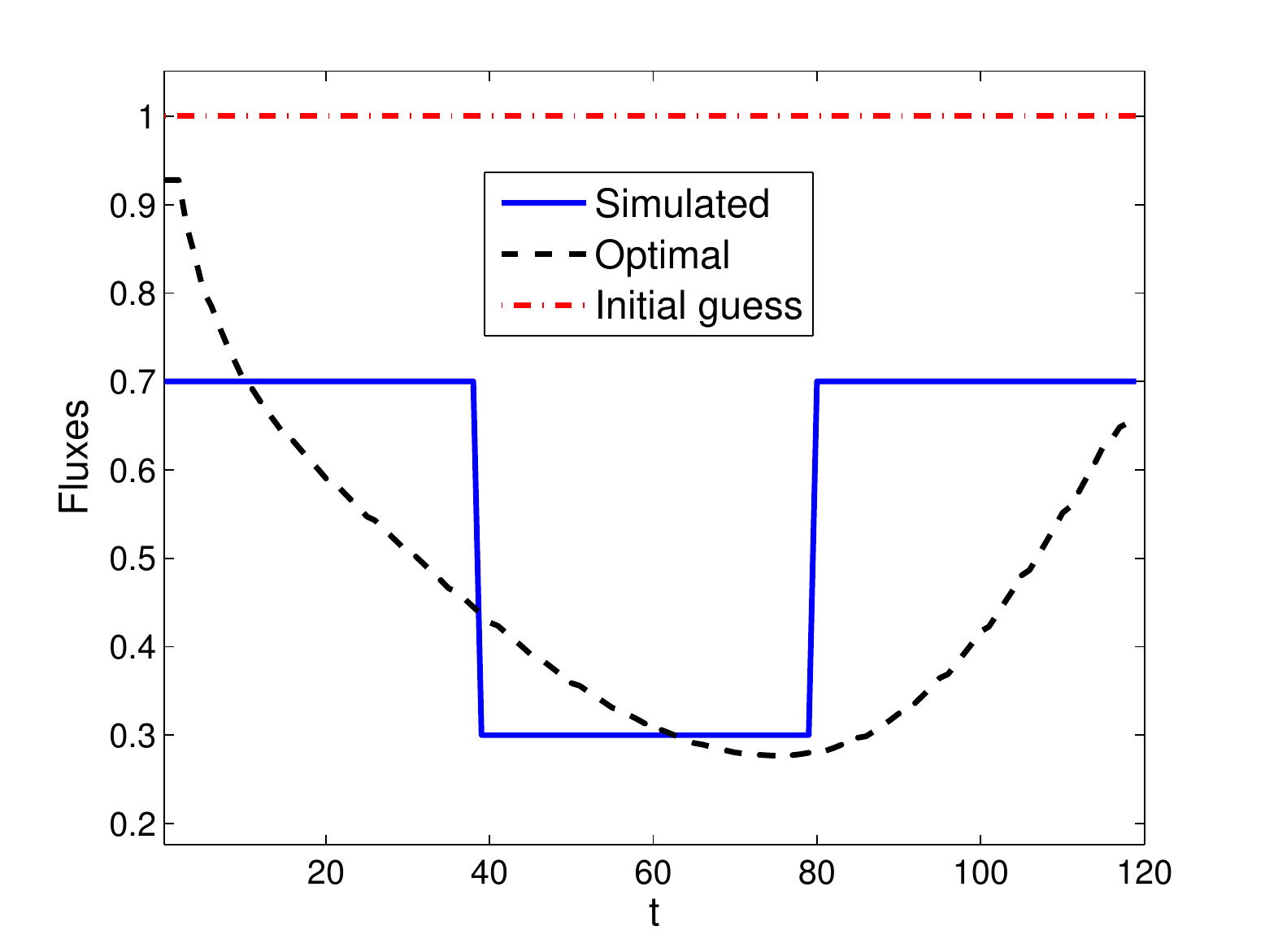}}
\caption{
Reconstruction of the final concentration of the species $i=3$. 
}
\label{fig:species3}
\end{figure} 

In Figure~\ref{fig:CONV}-(a) we plot the evolution of the value of the cost $\cJ(\Phi)$ with respect to the number of iterations. We refer the reader to \cite{theseAthmane} for more details and comparison between different minimization approaches.  

\begin{figure}[h!]  
\centering
\includegraphics[width=8cm, height=6cm]{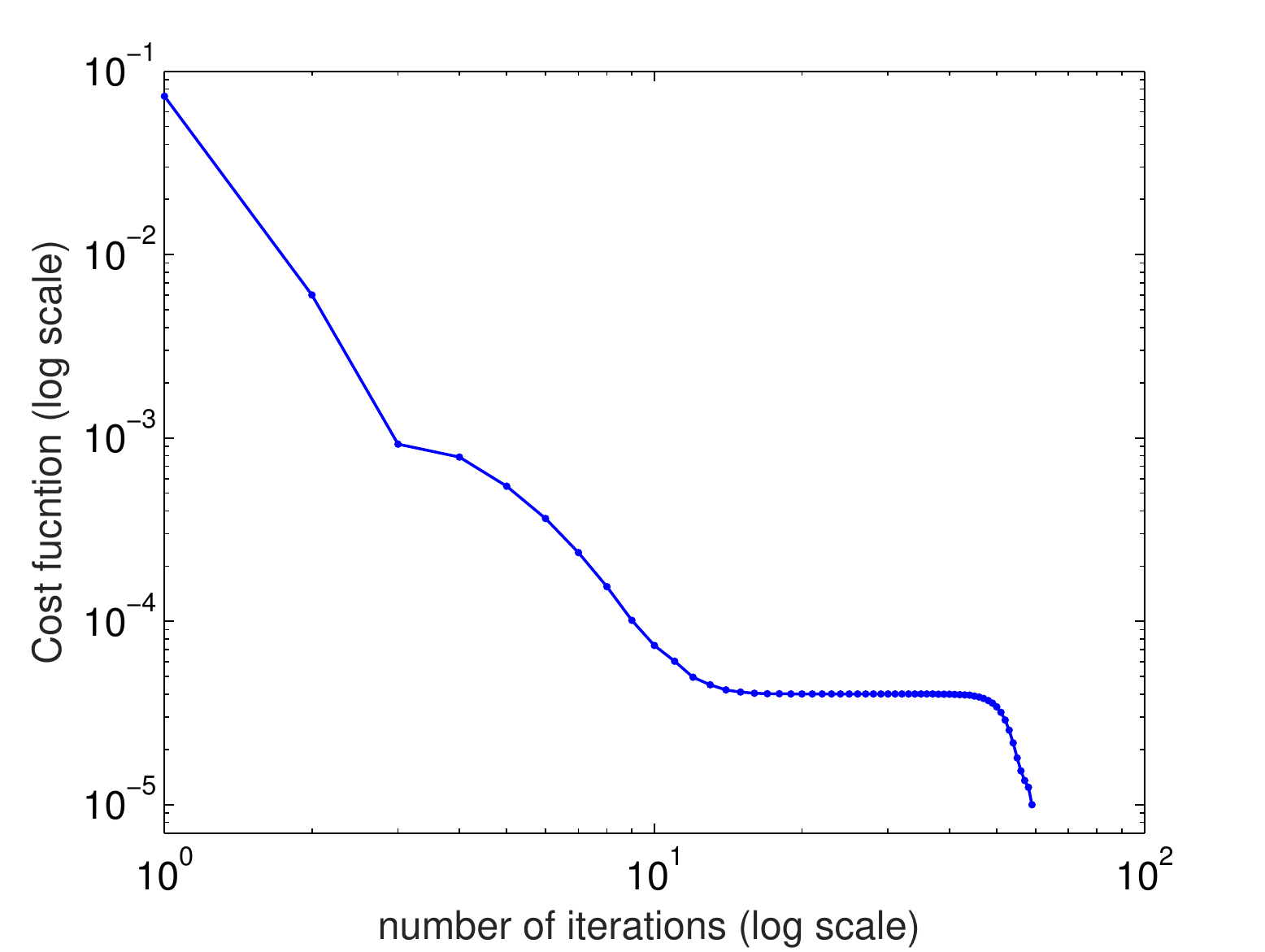}
\caption{ Convergence of the BFGS gradient descent algorithm for the minimization problem~\eqref{eq:optimal}.}
\label{fig:CONV}
\end{figure} 

We numerically observe that all the concentrations are well reconstructed and that the value of the optimal thickness $e_{\Phi^*} =  483.4022$ is close to the target thickness $e_{\Phi_{\rm sim}} = 483.4$. Unlike the external fluxes $\Phi_{\rm sim}$, the optimal fluxes$\Phi^*$ are not piecewise constant. These tests show that the uniqueness of a solution to the optimization problem \eqref{eq:optimal} can not be expected in general. We refer the reader to \cite{theseAthmane} for results obtained in the case of 
the control of PVD process for the fabrication of this film solar cells. 

\section{Conclusion}
In this work, we propose and analyze a one-dimensional model for the description of a PVD process. 
The evolution of the local concentrations of the different chemical species in the bulk of the growing layer is described via a system of cross-diffusion equations similar 
to the ones studied in~\cite{Burger,Jungel}. The growth of the thickness of the layer is related to the external fluxes of atoms that are absorbed at the surface of the film.

The existence of a global weak solution to the final system using the boundedness by entropy method under assumptions on the diffusion matrix of the system close to those needed in~\cite{Jungel} is established. 
In addition, the entropy density $h$ is required to be continuous (hence bounded) on the set $\overline{\cD} =  \left\{ u = (u_i)_{1 \leq i \leq n} \in \R_+^n, \; \sum_{i=1}^n u_i \leq 1\right\}$. 

We prove the existence of a solution to an optimization problem under the assumption that there exists a unique global weak solution to the obtained system, whatever the value of the external fluxes. 

Lastly, in the case when the entropy density is defined by $h(u) = \sum_{i=1}^n u_i \log u_i + (1-\rho_u) \log (1-\rho_u)$, we prove in addition that, when the external fluxes are constant and positive, the local concentrations 
converge in the long time to a constant profile at a rate which scales like $\mathcal{O}\left( \frac{1}{t} \right)$. 

\medskip

A discretization scheme, which is observed to be unconditionnaly stable, is introduced for the discretization of \eqref{eq:rescaled}. This scheme enables to preserve constraints \eqref{eq:constraint} at the discretized level.  

\medskip

We see this work as a preliminary step before tackling related problems in higher dimension, including surfacic diffusion effects. Besides, 
the proof of assumption (C1) remains an open question in general at least to our knowledge. Lastly, a nice theoretical question which is not tackled in this paper, but will be the object of future research, is 
the characterization of the set of reachable concentration profiles.

\medskip

\section{Acknowledgements}

We are very grateful to Eric Canc\`es and Tony Leli\`evre for very
helpful advice and discussions. We would like to thank Jean-François Guillemoles, Marie Jubeault, Torben Klinkert and Sana Laribi from IRDEP, who introduced us to the problem of modeling 
PVD processes for the fabrication of thin film solar cells. The EDF company is acknowledged for funding. We would also like to thank Daniel Matthes
for very helpful discussions on the theoretical part  {and the reviewers whose comments helped in very significantly improving the quality of the paper}.

\section{Appendix}\label{sec:appendix}

\subsection{Formal derivation of the diffusion model~\eqref{eq:system_all}}
We present in this section a simplified formal derivation of the cross-diffusion model \eqref{eq:system_all} from a one-dimensional microscopic lattice hopping model with size exclusion, 
in the same spirit than the one proposed in \cite{Burger}. 

We consider here a solid occupying the whole space $\R$ and discretize the domain using a uniform grid of step size $\Delta x >0$. At any time $t\in [0,T]$, we denote by $u_i^{k,t}$ the number of atoms of type $i$ 
($0\leq i \leq n$) in the $k^{th}$ interval $[k\Delta x, (k+1) \Delta x)$ ($k\in\Z$). Let $\Delta t>0$ denote a small enough time step. 
We assume that during the time interval $\Delta t$, an atom $i$ located in the $k^{th}$ interval can exchange its position with an atom of type $j$ ($j\neq i$) located in one of the two neighbouring intervals with 
probability $p_{ij} = p_{ji} >0$. In average, we obtain the following evolution equation for $u_i^{k,t}$: 
\begin{align*}
 u_i^{k,t+\Delta t} - u_i^{k,t} & = \sum_{0\leq j \neq i \leq n} p_{ij}\left( u_i^{k+1,t} u_j^{k,t} + u_i^{k-1,t} u_j^{k,t} - u_i^{k,t} u_j^{k+1,t} - u_i^{k,t} u_j^{k-1,t} \right)\\
 & = \sum_{0\leq j \neq i \leq n} p_{ij}\left[ u_j^{k,t}\left(u_i^{k+1,t} + u_i^{k-1,t} - 2 u_i^{k,t} \right) - u_i^{k,t} \left( u_j^{k+1,t} +  u_j^{k-1,t} - 2u_j^{k,t} \right) \right].\\
\end{align*}
This yields that
$$
\frac{u_i^{k,t+\Delta t} - u_i^{k,t}}{\Delta t}  = \frac{2 \Delta x^2}{\Delta t} \sum_{0\leq j \neq i \leq n} p_{ij} \left[ u_j^{k,t}\frac{u_i^{k+1,t} + u_i^{k-1,t} - 2 u_i^{k,t}}{2\Delta x^2} - u_i^{k,t} \frac{u_j^{k+1,t} +  u_j^{k-1,t} - 2u_j^{k,t}}{2\Delta x^2} \right].
$$
Choosing $\Delta t$ and $\Delta x$ so that these quantities satisfy a classical diffusion scaling $\frac{2 \Delta x^2}{\Delta t}  = \alpha >0$, denoting by $K_{ij}:= \alpha p_{ij}$ and letting the time step and grid size go to $0$, 
we formally obtain the following equation for the evolution of $u_i$ on the continuous level: 
$$
\partial_t u_i = \sum_{0\leq j \neq i \leq n} K_{ij}\left( u_j \Delta_x u_i - u_i \Delta_x u_j\right),  
$$
which is identical to the system of equations~\eqref{eq:system_all} introduced in the first section. Of course, this formal argument can be easily extended to any arbitrary dimension.

\subsection{Leray-Schauder fixed-point theorem}

\begin{theorem}[Leray-Schauder fixed-point theorem]\label{th:LS}
 Let $B$ be a Banach space and $S: B \times [0,1] \to B$ be a continuous map such that
 \begin{itemize}
  \item[(A1)] $S(x,0) = 0$ for each $x\in B$; 
  \item[(A2)] $S$ is a compact map; 
  \item[(A3)] there exists a constant $M>0$ such that for each pair $(x,\sigma) \in B \times [0,1]$ which satisfies $x = S(x,\sigma)$, we have $\|x\| <M$. 
 \end{itemize}
Then, there exists a fixed-point $y\in B$ satisfying $y= S(y,1)$. 
\end{theorem}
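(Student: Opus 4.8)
The plan is to deduce the statement from the Leray--Schauder topological degree, whose three structural axioms --- normalization, homotopy invariance, and the solution property --- are tailored exactly to hypotheses (A1)--(A3). Concretely, I would fix the constant $M>0$ provided by (A3), set $\Omega := \{x \in B : \|x\| < M\}$ (the open ball of radius $M$), and regard $S : \overline{\Omega} \times [0,1] \to B$ as a compact homotopy connecting $S(\cdot,0)$ to $S(\cdot,1)$. The object of interest is the integer-valued degree $\deg\bigl(I - S(\cdot,\sigma), \Omega, 0\bigr)$, and the whole argument consists in checking that it is well defined for every $\sigma \in [0,1]$, that it does not depend on $\sigma$, and that it is nonzero.

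First I would verify that the degree is well defined along the entire homotopy. By (A2) the map $S$ is compact, so $I - S(\cdot,\sigma)$ is a compact perturbation of the identity, which is the admissible class for the Leray--Schauder degree. It then remains to rule out fixed points on the boundary: if $x \in \partial\Omega$ and $x = S(x,\sigma)$ for some $\sigma$, then $\|x\| = M$, contradicting the strict a priori bound $\|x\| < M$ of (A3). Hence $x \neq S(x,\sigma)$ for all $(x,\sigma) \in \partial\Omega \times [0,1]$, so $0 \notin (I - S(\cdot,\sigma))(\partial\Omega)$ and the degree is defined for each $\sigma$. The homotopy invariance property of the degree then gives
\[
\deg\bigl(I - S(\cdot,1), \Omega, 0\bigr) = \deg\bigl(I - S(\cdot,0), \Omega, 0\bigr).
\]
Next I would evaluate the right-hand side using (A1): since $S(x,0) = 0$ for every $x$, we have $I - S(\cdot,0) = I$, and because $0 \in \Omega$ the normalization axiom yields $\deg(I, \Omega, 0) = 1$. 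Combining this with the previous display gives $\deg(I - S(\cdot,1), \Omega, 0) = 1 \neq 0$. The solution property of the degree --- a nonzero degree forces $(I - F)(x) = 0$ to have a solution in $\Omega$ --- then produces a point $y \in \Omega$ with $y = S(y,1)$, which is exactly the desired fixed point.

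The genuine content, and the main obstacle, is not this bookkeeping but the construction of the Leray--Schauder degree satisfying the three axioms invoked above. For a self-contained treatment I would build it by Schauder finite-rank approximation: approximate the compact map $S(\cdot,\sigma)$ uniformly on $\overline{\Omega}$ by maps with range in a finite-dimensional subspace, apply the classical Brouwer degree there, and check that the resulting integer is independent of the approximation. One must also confirm that (A2) delivers joint compactness of the homotopy on $\overline{\Omega} \times [0,1]$, which is what legitimizes the homotopy-invariance step. A degree-free alternative exists only for the special structure $S(x,\sigma) = \sigma\,T(x)$ (Schaefer's form): there one composes $T$ with the radial retraction onto $\overline{B}_M$ and invokes Schauder's fixed-point theorem directly, but this trick does not accommodate the general $\sigma$-dependence allowed here, which is precisely why the degree-theoretic route is the natural one. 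In the write-up I would simply cite a standard reference for the degree and retain only the short three-step argument above.
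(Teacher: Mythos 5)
Your argument is correct, but note first that the paper does not prove this statement at all: Theorem~\ref{th:LS} is quoted in the appendix as the classical Leray--Schauder fixed-point theorem and is invoked as a black box in the proof of Lemma~\ref{lem:intermediate}, so there is no in-paper proof to compare against. Your degree-theoretic route is the standard one, and the three steps are sound: with $\Omega$ the open ball of radius $M$, hypothesis (A2) makes each $I-S(\cdot,\sigma)$ a compact perturbation of the identity on $\overline{\Omega}$; the \emph{strict} inequality $\|x\|<M$ in (A3) is exactly what excludes zeros of $I-S(\cdot,\sigma)$ on $\partial\Omega\times[0,1]$, so the degree is admissible along the whole homotopy; (A1) together with the normalization axiom gives $\deg(I-S(\cdot,0),\Omega,0)=\deg(I,\Omega,0)=1$ since $0\in\Omega$; and homotopy invariance plus the solution property then produce the fixed point of $S(\cdot,1)$. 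You also rightly flag that (A2), being compactness of $S$ on $B\times[0,1]$ jointly, is what legitimizes the homotopy-invariance step. One side remark in your closing paragraph is inaccurate, though it does not affect your proof: a degree-free argument is \emph{not} limited to Schaefer's special form $S(x,\sigma)=\sigma T(x)$. Gilbarg and Trudinger (\emph{Elliptic Partial Differential Equations of Second Order}, Theorem~11.6) prove precisely the statement above, with general $\sigma$-dependence, directly from Schauder's fixed-point theorem by encoding the homotopy parameter in a radial variable on the ball of radius $M$ and using the a priori bound (A3) to rule out the spurious fixed points; so if one wishes to avoid constructing the Leray--Schauder degree, a fully elementary route exists for this exact statement, and citing either source would be appropriate in the write-up.
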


\bibliography{biblio}

\begin{thebibliography}{10}

\bibitem{Alikakos}
Nicholas~D Alikakos.
\newblock Lp bounds of solutions of reaction-diffusion equations.
\newblock {\em Communications in Partial Differential Equations},
  4(8):827--868, 1979.

\bibitem{Amann}
Herbert Amann et~al.
\newblock Dynamic theory of quasilinear parabolic equations. ii.
  reaction-diffusion systems.
\newblock {\em Differential and Integral Equations}, 3(1):13--75, 1990.

\bibitem{theseAthmane}
Athmane Bakhta.
\newblock Mathematical models and numerical simulation of photovoltaic devices.
\newblock {\em PhD thesis, in preparation}, 2017.

\bibitem{Boudin}
Laurent Boudin, B{\'e}r{\'e}nice Grec, and Francesco Salvarani.
\newblock A mathematical and numerical analysis of the maxwell-stefan diffusion
  equations.
\newblock {\em Discrete and Continuous Dynamical Systems-Series B},
  17(5):1427--1440, 2012.

\bibitem{Burger}
Martin Burger, Marco Di~Francesco, Jan-Frederik Pietschmann, and B{\"a}rbel
  Schlake.
\newblock Nonlinear cross-diffusion with size exclusion.
\newblock {\em SIAM Journal on Mathematical Analysis}, 42(6):2842--2871, 2010.

\bibitem{ChenJungel1}
Li~Chen and Ansgar J{\"u}ngel.
\newblock Analysis of a multidimensional parabolic population model with strong
  cross-diffusion.
\newblock {\em SIAM journal on mathematical analysis}, 36(1):301--322, 2004.

\bibitem{ChenJungel2}
Li~Chen and Ansgar J{\"u}ngel.
\newblock Analysis of a parabolic cross-diffusion population model without
  self-diffusion.
\newblock {\em Journal of Differential Equations}, 224(1):39--59, 2006.

\bibitem{Difrancesco}
Marco Di~Francesco and Jes{\'u}s Rosado.
\newblock Fully parabolic keller--segel model for chemotaxis with prevention of
  overcrowding.
\newblock {\em Nonlinearity}, 21(11):2715, 2008.

\bibitem{Dolbeault}
Jean Dolbeault, Bruno Nazaret, and Giuseppe Savar{\'e}.
\newblock A new class of transport distances between measures.
\newblock {\em Calculus of Variations and Partial Differential Equations},
  34(2):193--231, 2009.

\bibitem{AubinJungel}
Michael Dreher and Ansgar J{\"u}ngel.
\newblock Compact families of piecewise constant functions in lp (0, t; b).
\newblock {\em Nonlinear Analysis: Theory, Methods \& Applications},
  75(6):3072--3077, 2012.

\bibitem{Griepentrog}
Jens~Andr{\'e} Griepentrog and Lutz Recke.
\newblock Local existence, uniqueness and smooth dependence for nonsmooth
  quasilinear parabolic problems.
\newblock {\em Journal of Evolution Equations}, 10(2):341--375, 2010.

\bibitem{Painter2}
Thomas Hillen and Kevin~J Painter.
\newblock A user’s guide to pde models for chemotaxis.
\newblock {\em Journal of mathematical biology}, 58(1-2):183--217, 2009.

\bibitem{BONNANS}
Claude~Lemaréchal J.Frédéric~Bonnans, Jean Charles~Gilbert and Claudia
  Sagastizàbal.
\newblock {\em Numerical Optimization. Theoretical and Practical Aspects},
  volume~1.
\newblock Springer-Verlag Berlin Heidelberg, 2006.

\bibitem{JKO}
Richard Jordan, David Kinderlehrer, and Felix Otto.
\newblock The variational formulation of the fokker--planck equation.
\newblock {\em SIAM journal on mathematical analysis}, 29(1):1--17, 1998.

\bibitem{JungelStelzer2}
Ansgar Juengel and Ines~Viktoria Stelzer.
\newblock Entropy structure of a cross-diffusion tumor-growth model.
\newblock {\em Mathematical Models and Methods in Applied Sciences},
  22(07):1250009, 2012.

\bibitem{Jungel}
Ansgar J{\"u}ngel.
\newblock The boundedness-by-entropy method for cross-diffusion systems.
\newblock {\em Nonlinearity}, 28(6):1963, 2015.

\bibitem{JungelStelzer1}
Ansgar Jungel and Ines~Viktoria Stelzer.
\newblock Existence analysis of maxwell--stefan systems for multicomponent
  mixtures.
\newblock {\em SIAM Journal on Mathematical Analysis}, 45(4):2421--2440, 2013.

\bibitem{Kufner}
Konrad Horst~Wilhelm K{\"u}fner.
\newblock Invariant regions for quasilinear reaction-diffusion systems and
  applications to a two population model.
\newblock {\em Nonlinear Differential Equations and Applications NoDEA},
  3(4):421--444, 1996.

\bibitem{Ladyzenskaia}
Olga~A Ladyzenskaja and Vsevolod~A Solonnikov.
\newblock Nn ural ceva, linear and quasilinear equations of parabolic type,
  translated from the russian by s. smith. translations of mathematical
  monographs, vol. 23.
\newblock {\em American Mathematical Society, Providence, RI}, 63:64, 1967.

\bibitem{LeNguyen}
Dung Le and Toan~Trong Nguyen.
\newblock Everywhere regularity of solutions to a class of strongly coupled
  degenerate parabolic systems.
\newblock {\em Communications in Partial Differential Equations},
  31(2):307--324, 2006.

\bibitem{Lepoutre}
Thomas Lepoutre, Michel Pierre, and Guillaume Rolland.
\newblock Global well-posedness of a conservative relaxed cross diffusion
  system.
\newblock {\em SIAM Journal on Mathematical Analysis}, 44(3):1674--1693, 2012.

\bibitem{Mielke}
Matthias Liero and Alexander Mielke.
\newblock Gradient structures and geodesic convexity for reaction--diffusion
  systems.
\newblock {\em Phil. Trans. R. Soc. A}, 371(2005):20120346, 2013.

\bibitem{LionsMagenes}
Jacques~Louis Lions and Enrico Magenes.
\newblock {\em Non-homogeneous boundary value problems and applications},
  volume~1.
\newblock Springer Science \& Business Media, 2012.

\bibitem{PVD}
Donald~M Mattox.
\newblock {\em Handbook of physical vapor deposition (PVD) processing}.
\newblock William Andrew, 2010.

\bibitem{Painter1}
Kevin~J Painter.
\newblock Continuous models for cell migration in tissues and applications to
  cell sorting via differential chemotaxis.
\newblock {\em Bulletin of Mathematical Biology}, 71(5):1117--1147, 2009.

\bibitem{moving}
Jacobus~W Portegies and Mark~A Peletier.
\newblock Well-posedness of a parabolic moving-boundary problem in the setting
  of wasserstein gradient flows.
\newblock {\em arXiv preprint arXiv:0812.1269}, 2008.

\bibitem{Redlinger}
Reinhard Redlinger.
\newblock Invariant sets for strongly coupled reaction-diffusion systems under
  general boundary conditions.
\newblock {\em Archive for Rational Mechanics and Analysis}, 108(4):281--291,
  1989.

\bibitem{Stara}
Jana Star{\'a} and Oldrich John.
\newblock Some (new) counterexamples of parabolic systems.
\newblock {\em Commentationes Mathematicae Universitatis Carolinae},
  36(3):503--510, 1995.

\bibitem{JungelZamponi1}
Nicola Zamponi and Ansgar J{\"u}ngel.
\newblock Analysis of degenerate cross-diffusion population models with volume
  filling.
\newblock In {\em Annales de l'Institut Henri Poincare (C) Non Linear
  Analysis}. Elsevier, 2015.

\bibitem{JungelZamponi2}
Nicola Zamponi and Ansgar J{\"u}ngel.
\newblock Analysis of degenerate cross-diffusion population models with volume
  filling.
\newblock In {\em Annales de l'Institut Henri Poincare (C) Non Linear
  Analysis}. Elsevier, 2015.

\bibitem{MatthesZinsl}
Jonathan Zinsl and Daniel Matthes.
\newblock Transport distances and geodesic convexity for systems of degenerate
  diffusion equations.
\newblock {\em Calculus of Variations and Partial Differential Equations},
  54(4):3397--3438, 2015.

\end{thebibliography}
\end{document}